\newcommand{\be}{\begin{equation}}
\newcommand{\ee}{\end{equation}}
\newcommand{\beq}{\begin{eqnarray}}
\newcommand{\eeq}{\end{eqnarray}}
\newtheorem{prop}{Proposition}[section]
\newtheorem{theo}[prop]{Theorem}
\newtheorem{lemm}[prop]{Lemma}
\newtheorem{coro}[prop]{Corollary}
\newtheorem{defi}[prop]{Definition}
\def\begeq{\begin{equation}}
\def\endeq{\end{equation}}
\def\p{\partial}
\begin{document}

\title {Regularity and rigidity of asymptotically hyperbolic manifolds}

\begin{abstract}
In this paper, we study some intrinsic characterization of
conformally compact manifolds. We show that, if a complete
Riemannian manifold admits an essential set and its curvature tends
to $-1$ at infinity in certain rate, then it is conformally
compactifiable and the compactified metrics can enjoy some
regularity at infinity. As consequences we prove some rigidity
theorems for complete manifolds whose curvature tends to the
hyperbolic one in a rate greater than $2$.
\end{abstract}


\keywords{conformally compact manifold, asymptotically hyperbolic,
regularity up to the boundary, rigidity}
\renewcommand{\subjclassname}{\textup{2000} Mathematics Subject Classification}
 \subjclass[2000]{Primary 53C25; Secondary 58J05}

\author{Xue Hu $^\dag$,  Jie Qing $^\ddag$
and  Yuguang Shi$^\dag$}

\address{Xue Hu, Key Laboratory of Pure and Applied mathematics, School of Mathematics Science, Peking University,
Beijing, 100871, P.R. China.} \email{huxue@math.pku.edu.cn}

\address{Jie Qing, Department of Mathematics, University of California, Santa Cruz, CA 95064, USA} \email{qing@ucsc.edu}

\address{Yuguang Shi, Key Laboratory of Pure and Applied mathematics, School of Mathematics Science, Peking University,
Beijing, 100871, P.R. China.} \email{ygshi@math.pku.edu.cn}

\thanks{$^\dag$ Research partially supported by   NSF grant of China 10725101 and 10990013.}
\thanks{$^\ddag$ Research partially supported by NSF DMS-0700535}

\date{2009}
\maketitle

\markboth{Xue Hu,  Jie Qing  and Yuguang Shi}{}

\section {introduction}
In recent years there are growing interests in the study of
conformally compact Riemannian manifolds from mathematics and
physics. Conformally compact Einstein manifolds, for instance, are
the basic objects that are used to establish the mathematical
foundation for the so-called AdS/CFT correspondence proposed and
studied in some promising theory of quantum gravity.

Suppose that $X^{n+1}$ is a smooth manifold with boundary $\partial
X = M^n$. A defining function $x$ of the boundary $M^n$ in $X^{n+1}$
is a smooth function on $X^{n+1}$ such that
\begin{enumerate}
  \item $x > 0$ in $X^{n+1}$;
  \item $x = 0$ on $M^n$;
  \item $dx \neq 0$ on $M^n$.
\end{enumerate}
A complete Riemannian metric $g$ on $X^{n+1}$ is said to be
conformally compact of regularity $C^{k, \alpha}$ if $x^2g$ extends
to be a $C^{k, \alpha}$ compact Riemannian metric on $\bar X^{n+1}$
for a defining function $x$ of the boundary $M^n$ in $X^{n+1}$.

A basic and interesting question is that, what are the sufficient
conditions for a complete Riemannian manifold $(X^{n+1}, \ g)$ to be
conformally compact of reasonable regularity? It is rather easy to
see that the Riemann curvature needs to tend to a negative constant
 at the infinity. On the other hand, due to the complexity of the
end structure of a hyperbolic manifold, it is clear that a simple
volume growth condition would not be enough to yield anything like
what is true about asymptotically locally Euclidean manifolds.

With those understandings in mind, in this note, as in \cite{ST}, we
will consider a complete, noncompact Riemannian manifold $(X,\ g)$
whose curvature is asymptotically hyperbolic of order $a$ as
follows:
\begin{equation}\label{deacyofcurv}
||Rm-\mathbf{K}||\leq C e^{-a \rho}£¬
\end{equation}
where $Rm$ denotes the Riemann curvature tensor of the metric $g$
and $\mathbf{K}$ the constant curvature tensor of $-1$, i.e.,
$\mathbf{K}_{ijkl}=-(g_{ik}g_{jl}-g_{ij}g_{kl})$, $\rho$ is the
distance function to a fixed point in $X$ with respect to $g$, and
$C$ is a positive constant independent of $\rho$. To control the
wild behavior of the ends we consider a notion of essential sets
which was introduced in \cite{BM} and \cite{Ba}.

\begin{defi}\label{essentialset}
A compact subset $\mathbf{E}$ of $(X, \ g)$ is called an essential
set if
\begin{enumerate}
  \item $\mathbf{E}$ is a compact domain of $X$ with smooth and convex boundary
$\mathbf{B}$ $:= \partial \mathbf{E},$ i.e. its second fundamental
forms with respect to the outward  unit normal vector field is
positive definite. Any geodesic half line emitting from $\mathbf{B}$
orthogonally to the outside of $E$ can be extended to infinity;
  \item the distance function $\rho$ to the essential set is smooth;
  \item the region in $X$ which is outside the essential set
$\mathbf{E}$ is diffeomorphic to $[0,\infty)\times \mathbf{B}$.
\end{enumerate}
\end{defi}

It is not easy to determine whether or not there is an essential set
even in a complete hyperbolic manifold. But it clearly is a
necessary condition for a complete Riemannian manifold to be
conformally compactifiable in the above sense. Indeed in \cite{Ba}
and \cite{BG}, for a complete Riemannian manifold which possesses an
essential set and whose curvature is asymptotically hyperbolic and
covariant derivatives of Riemann curvature decay, the authors were
able to obtain conformal compactifications with some regularity
results. Those decay assumptions on the covariant derivatives of
curvature were derived when in addition the manifold is Einstein
(cf. \cite{BG}).

Now suppose that $(X^{n+1}, \ g)$ is a complete Riemannian manifold
that possesses an essential set $\mathbf{E}$. Then we know that
outside $\mathbf{E}$ the metric $g$ can be written as
$$
g =d\rho^2 + g_{ij}(\rho, \theta)d\theta^i d\theta^j,
$$
where $\rho$ is the distance function to $\mathbf{E}$ and $\theta$
is a local coordinate on $\mathbf{B} = \partial \mathbf{E}$. We have
a convention for indices in this note that all Latin letters runs
from $1$ to $n$ while all Greek letters runs from $0$ to $n$. With
this identification there is a natural differential structure on the
closure $\bar X$ which simply is the product structure as follows.
Let
$$
\mathbf{E}_1 =\{x \in M : \rho(x)\leq 1\},
$$
then $\mathbf{E}\subset \mathbf{E}_1$ and
$$
\bar X =\mathbf{E}_1 \coprod [0,\delta_0]\times \mathbb {B},
$$
where $\delta_0 = \log\frac{e+1}{e-1}$, and $\coprod$ denotes the
connected sum by identifying $(1,\theta)\in$ $\partial\mathbf{E}_1 $
with $(\delta_0, \theta) \in \{\delta_0 \}\times \mathbb {B}$. Hence
we may use the coordinate $(\tau, \theta)$ to replace $(\rho,
\theta)$ outside $\mathbf{E}_1$ such that
$$
\tau=\log\frac{e^\rho +1}{e^\rho -1} \ \text{and} \ \sinh^{-1}\rho =
\sinh \tau.
$$
Therefore, in the new coordinate, we consider
\begin{equation}\label{defbarg}
\begin{split}
\bar{g} & =\sinh^{-2}\rho \cdot g\\
&= \sinh^{-2}\rho d\rho^2+\sinh^{-2}\rho g_{ij}(\rho,
\theta)d\theta^i d\theta^j \\
&=d\tau^2 + \bar g_{ij}(\tau, \theta)d\theta^i d\theta^j,
\end{split}\nonumber
\end{equation}
Now $(X\setminus \mathbf{E}_1, \ \bar g)$ is a conformal
compactification of $(X\setminus \mathbf{E}_1, \ g)$ with the
defining function $\sinh \tau$ of the boundary $\mathbf{B}$ in $\bar
X$. Thus we will focus on the regularity of $\bar g$ at $\tau=0$.
Note that perhaps we will have to consider it in other coordinate
systems nearby the infinity in order to get better regularity.

As pointed out in \cite{BG} the regularity of their conformal
compactifications were obtained via ODE analysis. The draw back for
that is the demand of the assumptions on the decay of covariant
derivatives of Riemann curvature such as:
\begin{equation}\label{deacyofderivativeofcurv}
||\nabla^k Rm||\leq C e^{-b\rho},
\end{equation}
where $C$ is a positive constant independent of $\rho$. For
instance, in \cite {BG}, it assumes (2) for $k=1$ and $b > 0$ to
have $C^{0, b}$ regularity; and it assumes (2) for $k=2$ and $b
> 1$ to have $C^{1, b-1}$ regularity. Note that, if the curvature
condition (\ref{deacyofcurv}) holds with $a > 0$ and the curvature
condition (\ref{deacyofderivativeofcurv}) holds with $b >0$, then
$a\geq b$, as noticed in \cite{BG}. Our first goal in this note is
to make use of harmonic coordinates near the infinity. We are able
to obtain the $C^{2, \alpha}$ regularity under the curvature
condition (\ref{deacyofderivativeofcurv}) with $k=1$ and $b > 2$.

\begin{theo}\label{mainresult3}
Suppose that $(X^{n+1}, \ g)$ is a complete Riemannain manifold with
an essential set $\mathbf{E}$ and that it satisfies the curvature
condition (\ref{deacyofcurv}) with $a > 0$ and the curvature
condition (\ref{deacyofderivativeofcurv}) with $k=1$ and $b > 2$.
Then there is differentiable structure on $\bar X$, which is smooth
in the interior of $X$ and $C^{3, \alpha}$ up to the boundary for
some $\alpha \in (0,1)$. And in this differentiable structure, $\bar
g$ is smooth in the interior of $X$ and is $C^{2, \alpha}$ smooth up
to the boundary for some $\alpha \in (0,1)$.
\end{theo}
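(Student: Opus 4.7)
The approach is to bootstrap from a preliminary $C^{1,\beta}$ conformal compactification---obtainable by the ODE methods of \cite{Ba, BG} along the radial geodesics, using the curvature bounds \eqref{deacyofcurv} and \eqref{deacyofderivativeofcurv}---up to $C^{2,\alpha}$ via harmonic coordinates and Schauder theory. The product coordinates $(\tau,\theta)$ are not harmonic for $\bar g$ at the boundary, so to apply elliptic regularity to the Ricci equation one must first change coordinates near each point of $\mathbf{B}$.

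First, near a chosen point $p\in\mathbf{B}$ I would solve the Dirichlet problem $\Delta_{\bar g}y^\alpha=0$ on a small half-ball, with boundary data the product coordinates $(\tau,\theta^i)$. At the preliminary $C^{1,\beta}$ regularity of $\bar g$ this is solvable, and the solutions $\{y^\alpha\}_{\alpha=0}^n$ are $C^{2,\beta}$ and form a chart compatible with the product structure, defining a $C^{2,\beta}$ differentiable structure on $\bar X$ near the boundary.

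Second, in harmonic coordinates the metric satisfies the quasilinear elliptic system
\[
\bar g^{\gamma\delta}\partial_\gamma\partial_\delta \bar g_{\alpha\beta}
=-2\,\mathrm{Ric}(\bar g)_{\alpha\beta}+Q(\bar g,\partial\bar g),
\]
where $Q$ is quadratic in $\partial\bar g$ with smooth dependence on $\bar g$ and $\bar g^{-1}$. Using the conformal transformation law expressing $\mathrm{Ric}(\bar g)$ in terms of $\mathrm{Ric}(g)$ and covariant derivatives of $\log\sinh\rho$, together with the bounds $\|Rm-\mathbf{K}\|\le Ce^{-a\rho}$ and $\|\nabla Rm\|\le Ce^{-b\rho}$ with $b>2$, I would show that $\mathrm{Ric}(\bar g)_{\alpha\beta}$ is Hölder continuous up to $\tau=0$ in these harmonic coordinates. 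The threshold $b>2$ arises precisely because the conformal factor $\sinh^{-2}\rho\sim\tau^{-2}$ multiplying $g$ must be absorbed into the curvature decay, and the first covariant derivative of $Rm$ is what controls the Hölder modulus of the rescaled Ricci tensor.

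Finally, Schauder boundary estimates applied to the above elliptic system yield $\bar g_{\alpha\beta}\in C^{2,\alpha}$ up to the boundary, for some $\alpha\in(0,1)$. The harmonic coordinates $y^\alpha$ then gain one derivative and become $C^{3,\alpha}$, giving the claimed $C^{3,\alpha}$ differentiable structure on $\bar X$ in which $\bar g$ is $C^{2,\alpha}$. The main obstacle I expect is the Hölder bound on $\mathrm{Ric}(\bar g)$: tracking how the decay exponent $b>2$ on $\nabla Rm$, after the conformal rescaling and the change from product to harmonic coordinates (which itself is only $C^{2,\beta}$ at first), produces a genuine Hölder estimate on $\mathrm{Ric}(\bar g)$ of the form needed to trigger the Schauder bootstrap, without circularly using regularity of $\bar g$ beyond what has already been established.
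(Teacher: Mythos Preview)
Your overall architecture---harmonic coordinates near the boundary, the elliptic system $\bar g^{\gamma\delta}\partial_\gamma\partial_\delta\bar g_{\alpha\beta}=-2\widehat{R}_{\alpha\beta}+Q(\bar g,\partial\bar g)$, H\"older control of the curvature term when $b>2$, then Schauder---matches the paper. But there is a real gap at the very first step: the ``preliminary $C^{1,\beta}$ conformal compactification obtainable by the ODE methods of \cite{Ba,BG}'' does not exist under the hypotheses. With only $k=1$ in \eqref{deacyofderivativeofcurv}, \cite{Ba,BG} give $\bar g$ merely \emph{Lipschitz} up to $\tau=0$ (see the paper's discussion of \cite{BG}: $k=1$ yields $C^{0,b}$, while $C^{1,b-1}$ requires $k=2$). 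The Riccati/ODE analysis controls $\partial_\tau\bar g_{ij}$, $\partial_\tau^2\bar g_{ij}$, and $\partial_\tau\partial_{\theta^k}\bar g_{ij}$, but not the purely tangential second derivatives $\partial_{\theta^k}\partial_{\theta^l}\bar g_{ij}$; so you cannot start the bootstrap at $C^{1,\beta}$, and your Dirichlet harmonic coordinates would only be $C^{1,\mu}$, not $C^{2,\beta}$.

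The paper closes this gap by an intermediate $W^{2,p}$ step (Theorem~\ref{mainresult1}) that you are missing, and it does so not on a half-ball but by \emph{doubling} across $\tau=0$ (the boundary is totally geodesic when $a>1$) and extending $\bar g$ evenly. Harmonic coordinates are then constructed as interior solutions on the double; they are automatically even in $\tau$ by the maximum principle, and one shows $\partial_\tau\hat g_{\alpha\beta}=0$ at $\tau=0$ (Corollary~\ref{ptauw1p}), so that $\hat g$ is a \emph{weak} solution of the elliptic system across the seam (Proposition~\ref{weakcurvature}). $L^p$ theory then gives $\hat g\in W^{2,p}$, hence $C^{1,\alpha}$. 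Only at this point does the paper invoke the estimate $\|\bar\nabla Rm(\bar g)\|_{\bar g}\le\Lambda\,\tau^{-(3-b)}$ from Proposition~\ref{estimatesofcompactifiedcurvature} and the calculus Lemma~\ref{elementarylemma} to conclude $Rm(\bar g)\in C^{0,\mu_1}$, after which Schauder (interior, on the double) gives $C^{2,\alpha}$. Your half-ball approach would also need to specify and justify boundary conditions for the metric system before invoking boundary Schauder; the doubling trick sidesteps that issue entirely.
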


We remark here that in fact we can get $C^{1, \alpha}$ regularity
when $b>2 - \frac 1{n+1}$ (please see Theorem \ref{mainresult1} in
Section 3). Our second goal in this note is to make use of the Ricci
flow on complete manifolds to obtain some regularity without
assuming decay conditions on the covariant derivatives of curvature.
Let $(X^{n+1}, \ g)$ be a complete Riemannian manifold satisfying
the curvature condition (\ref{deacyofcurv}) with $a > 0$. We
consider the normalized Ricci flow as follows:
\begin{equation}\label{nRicciflow}
\left\{
\begin{array}{ll}
    \frac{\partial}{\partial t}g_{\alpha\beta} = -2ng_{\alpha\beta}-2R_{\alpha\beta},  \\
    g_{\alpha\beta}(x,0)=g_{\alpha\beta}(x), \quad \text{on}\quad X.\\
\end{array}
\right.
\end{equation}
Since our initial metric satisfies the curvature condition
(\ref{deacyofcurv}), by the works of Shi (see Theorem 1.1 in
\cite{Shi}), the evolution equation $(\ref{nRicciflow})$ has a
smooth solution $g(\cdot, t)$ for a short time. Let
$$
E_{\alpha\beta\gamma\delta}(g) = R_{\alpha\beta\gamma\delta}(g) +
(g_{\alpha\gamma}g_{\beta\delta}-g_{\alpha\delta}g_{\beta\gamma}).
$$
Using the evolution equations and the maximum principles we show
that

\begin{lemm}
Suppose that $(X^{n+1}, \ g)$ is a complete Riemannian manifold and
that it satisfies the curvature condition (\ref{deacyofcurv}) with
$a > 0$. Let $(M,g(t))$, $t\in [0,T],$ be a complete solution of the
normalized Ricci flow (\ref{normalizedequation}). Then there exists
constants $T_{0}$ and $C$ such that
$$
\|E(g(t))\|\leq Ce^{-a\rho_{0}}
$$
and
$$
\|\nabla E(g(t))\|\leq \frac{C}{\sqrt{t}}e^{-a\rho_{0}},$$ where
$\nabla$ is with respect to metric $g(t)$, and $C$ is independent of
$t$, $0<t\leq$ $T_{0}\leq T.$
\end{lemm}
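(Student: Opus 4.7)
The plan is to establish heat-type differential inequalities for $|E|^2$ and $|\nabla E|^2$ under the normalized Ricci flow and then to apply a maximum-principle comparison against the exponential spatial barrier $h(x) := e^{-2a\rho_0(x)}$. First I would compute the evolution of $E$ under $\partial_t g = -2ng - 2\mathrm{Ric}$. Combining the standard formula for the evolution of the Riemann tensor with the contribution of the $-2ng$ term to the algebraic piece $g_{\alpha\gamma}g_{\beta\delta}-g_{\alpha\delta}g_{\beta\gamma}$ gives schematically $\partial_t E = \Delta E + Rm \ast E + cE$, in which every term on the right vanishes at $E \equiv 0$ (consistent with the hyperbolic metric being a stationary point of the flow). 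Since Shi's short-time theorem provides a uniform bound on $|Rm(g(t))|$ over some interval $[0,T_0]\subseteq [0,T]$, Kato's inequality yields
\[ \frac{\partial}{\partial t} |E|^2 \;\leq\; \Delta |E|^2 - 2|\nabla E|^2 + C_1 |E|^2. \]

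Next I would verify that $h(x)=e^{-2a\rho_0(x)}$ is, up to a constant, a supersolution. Outside the cut locus of the basepoint, $|\nabla \rho_0|_{g(t)}$ is close to $1$ and $|\Delta_{g(t)}\rho_0|$ is uniformly bounded on $[0,T_0]$ by Laplacian comparison applied to the initial curvature bound together with the short-time $C^0$ closeness of $g(t)$ to $g(0)$; hence $\Delta h \leq C_2 h$. Calabi's trick deals with the cut locus in the usual way. Comparing $|E|^2$ with $K e^{(C_1+C_2)t} h(x)$ and applying the Omori--Yau maximum principle (whose bounded-sectional-curvature hypothesis is ensured by Shi's estimate) then produces the first inequality of the lemma, with $K$ determined by the initial bound $|E(g(0))|^2 \leq C e^{-2a\rho_0}$ that comes directly from hypothesis (\ref{deacyofcurv}).

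For the gradient estimate I would derive the parallel inequality
\[ \frac{\partial}{\partial t}|\nabla E|^2 \;\leq\; \Delta|\nabla E|^2 - 2|\nabla^2 E|^2 + C_3 |\nabla E|^2 + C_4 |E|\,|\nabla Rm|\,|\nabla E|, \]
where Shi's derivative-of-curvature estimate supplies the bound on $|\nabla Rm|$ on $[0,T_0]$. I would then form the Bernstein-type auxiliary function $F := A|E|^2 + t|\nabla E|^2$ and choose $A$ large enough so that the negative term $-2A|\nabla E|^2$ absorbs both the extra $|\nabla E|^2$ arising from $\partial_t(t|\nabla E|^2) = |\nabla E|^2 + t\,\partial_t|\nabla E|^2$ and the Cauchy--Schwarz reduction of the $|E||\nabla E|$ cross term. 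This yields $\partial_t F \leq \Delta F + C_5 F$, and the same comparison against the barrier $h$ used above gives $F \leq C_6 h$. Discarding the nonnegative piece $A|E|^2$ recovers $|\nabla E|^2 \leq C_6 t^{-1} e^{-2a\rho_0}$, which is the claimed bound after taking square roots.

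The principal technical obstacle I anticipate is justifying the maximum principle on the non-compact complete manifold $X$: this I would handle either via Omori--Yau, whose bounded-curvature hypothesis holds on $[0,T_0]$ by Shi, or via the standard exhaustion trick of adding an $\varepsilon(\rho_0^2+t)$ term that vanishes at infinity and then letting $\varepsilon \to 0$; the non-smoothness of $\rho_0$ at its cut locus is absorbed by Calabi's trick. Once these tools are in place, the remaining work is algebraic verification that every ``bad'' lower-order term produced by curvature commutators and by the evolving metric is genuinely controlled by the short-time bounds on $|Rm|$ and $|\nabla Rm|$ furnished by Shi's theorem.
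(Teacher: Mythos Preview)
Your approach is essentially the paper's: derive the evolution inequalities for $|E|^2$ and $|\nabla E|^2$, form the Bernstein combination $A|E|^2 + t|\nabla E|^2$, weight by the barrier $e^{-2a\rho_0}$, and apply a noncompact maximum principle. The paper packages this by multiplying through by $e^{2a\rho_0}$ rather than comparing to $e^{-2a\rho_0}$, and it invokes the Karp--Li weighted $L^2$ maximum principle rather than Omori--Yau, but these are interchangeable.

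One point deserves tightening. You justify $|\Delta_{g(t)}\rho_0|\leq C$ by ``Laplacian comparison for $g(0)$ together with the short-time $C^0$ closeness of $g(t)$ to $g(0)$.'' The $C^0$ closeness of the metrics does \emph{not} by itself control $\Delta_{g(t)}\rho_0-\Delta_{g(0)}\rho_0$, since that difference contains the Christoffel term $g(t)^{ij}(\Gamma(t)^k_{ij}-\Gamma(0)^k_{ij})\partial_k\rho_0$, which involves first derivatives of the metric. The paper handles this directly (its Lemma on $\Delta_{g(t)}\rho_0$): one computes $\partial_t\Gamma^k_{ij}=-g^{kl}(R_{il,j}+R_{jl,i}-R_{ij,l})$, so that $\partial_t(\Delta_{g(t)}\rho_0)=2R^{\alpha\beta}(\nabla^2\rho_0)_{\alpha\beta}$ after the contracted Bianchi identity kills the $\nabla\mathrm{Ric}$ contribution; boundedness then follows from Hessian comparison at $t=0$ and Shi's estimates. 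Your argument can be repaired the same way (or by integrating $|\partial_t\Gamma|\leq C|\nabla\mathrm{Ric}|\leq C t^{-1/2}$ in time), but the phrase ``$C^0$ closeness'' is not the right justification.
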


It is then rather easy to obtain an improvement of Theorem A in
\cite{BG} as follows:

\begin{theo}\label{holdercontinuous} Suppose $(X^{n+1}, \ g)$ is a complete
Riemannian manifold with an essential set $\mathbf{E}$ and that it
satisfies the curvature condition (\ref{deacyofcurv}) with $0<a<1$.
Then $\bar {g}$ is $C^{0,\mu}$ smooth up to the boundary $\tau =0$
for $\mu = \frac 23 a$.
\end{theo}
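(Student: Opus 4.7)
The plan is to combine Lemma 1.2 with Theorem A of \cite{BG} and an interpolation in the Ricci flow time parameter. For each small $t>0$, the flowed metric $g(t)$ satisfies, by Lemma 1.2, both the curvature bound $\|E(g(t))\|\le Ce^{-a\rho}$ and the derivative bound $\|\nabla E(g(t))\|\le\frac{C}{\sqrt{t}}e^{-a\rho}$. Hence the hypotheses of Theorem A of \cite{BG} are met for $g(t)$ with decay rate $a<1$ for both $Rm$ and $\nabla Rm$, so $\overline{g(t)}:=\sinh^{-2}(\rho)g(t)$ extends $C^{0,a}$ up to $\{\tau=0\}$ with H\"older seminorm bounded linearly by the $\|\nabla Rm\|$-constant, hence by $C/\sqrt{t}$.

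Next, directly from the flow equation $\partial_{t}g=-2(ng+\mathrm{Ric})$, using that $ng+\mathrm{Ric}$ is (up to a universal multiple) a trace of $E$, one obtains $|\partial_{t}g|_{g}\le C\|E\|\le Ce^{-a\rho}$, so
\begin{equation*}
|g(t)-g(0)|_{g(0)}\le Cte^{-a\rho}.
\end{equation*}
After the conformal rescaling by $\sinh^{-2}\rho$ and using $e^{-\rho}\sim\tau/2$, this translates into $|\overline{g(t)}(\tau,\theta)-\bar{g}(\tau,\theta)|_{\bar{g}}\le C't\,\tau^{a}$. Combining with the H\"older control of $\overline{g(t)}$ via the triangle inequality, for any $\tau_{1},\tau_{2}\in[0,\tau_{0}]$ with $\delta=|\tau_{1}-\tau_{2}|$ one gets
\begin{equation*}
|\bar{g}(\tau_{1},\theta)-\bar{g}(\tau_{2},\theta)|\le 2C't\,\tau_{0}^{a}+\frac{C}{\sqrt{t}}\,\delta^{a}.
\end{equation*}
Optimizing over $t$ by setting $t\sim(\delta/\tau_{0})^{2a/3}$ balances the two terms and yields
\begin{equation*}
|\bar{g}(\tau_{1},\theta)-\bar{g}(\tau_{2},\theta)|\le C\tau_{0}^{a/3}\delta^{2a/3},
\end{equation*}
which is the $C^{0,2a/3}$ bound; the boundary value $\bar{g}(0,\theta)$ exists by the Cauchy property this estimate provides.

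The main obstacle is extracting from Theorem A of \cite{BG} the explicit linear dependence of the H\"older seminorm of $\overline{g(t)}$ on the supremum constant of $\|\nabla Rm\|$; this requires tracking constants through the ODE analysis of \cite{BG}, but is straightforward since their estimate is linear in the curvature data. A secondary technical point is that $g(t)$'s own distance function differs from $\rho=\rho_{g(0)}$, but for $t\in(0,T_{0}]$ the two are comparable up to constants, so the essential-set structure and the decay rates in Lemma 1.2 transfer to the original geometry without affecting the order of any of the estimates above.
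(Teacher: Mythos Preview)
Your approach is essentially the same as the paper's: run the normalized Ricci flow, apply Theorem~A of \cite{BG} to the flowed metric $g(t)$ to get a $C^{0,a}$ bound on $\bar g(t)$ with seminorm $\leq C t^{-1/2}$, use the flow equation to get $\|\bar g(t)-\bar g(0)\|\leq Ct$, and interpolate by choosing $t=|x-y|^{2a/3}$. The paper packages the second step as Proposition~\ref{convergencethem} (comparing $\sinh^{-2}\rho_t\,g(t)$ with $\sinh^{-2}\rho_0\,g(0)$ via the bound $|\rho_t-\rho_0|\leq Ct$), which is exactly the ``secondary technical point'' you flag; your H\"older estimate should also be stated for general pairs of points rather than only in the $\tau$-direction, but the argument is identical.
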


We observe that the curvature estimates for the compactified metrics
$\bar g$ depends only on the curvature condition
(\ref{deacyofcurv}), even though our constructions of harmonic
coordinates used the curvature condition
(\ref{deacyofderivativeofcurv}). Based on compactness theorems in
\cite {GP} of Riemannian manifolds we find the regularity of
harmonic coordinates indirectly via some good approximations
provided by the Ricci flow. We then obtain
\newpage
\begin{theo}\label{mainresult2}
Suppose $(X^{n+1}, \ g)$ is a complete Riemannian manifold with an
essential set $\mathbf{E}$ and that it satisfies the curvature
condition (\ref{deacyofcurv}) with $a > 2-\frac 1{n+1}$. Then there
is differentiable structure $\Gamma$ on $\bar X$, which is smooth in
the interior of $X$ and $W^{3,p}$ up to the boundary for some $p
> n+1$. And in this differentiable structure $\Gamma$, $\bar g$ is
smooth in the interior of $X$ and it is $W^{2,p}$ up to the boundary
for some $p>n+1$, hence, it is $C^{1, \mu}$ smooth up to the
boundary for some $\mu \in (0,1)$. Moreover $\bar g$ is smooth in
the interior of $X$ and it is $W^{2,p}$ up to the boundary for any
$p>1$, hence, it is $C^{1, \mu}$ smooth up to the boundary for any
$\mu \in (0,1)$ when $a \geq 2$.
\end{theo}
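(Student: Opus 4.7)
The plan is to approximate $g$ by the Ricci flow solutions $g(t)$ from the preceding lemma---which enjoy the extra decay of $\nabla Rm$ needed to run the harmonic-coordinate argument underlying Theorem \ref{mainresult1}---and then to pass to the limit $t\to 0$ by means of the compactness theorem in \cite{GP}. The essential observation, highlighted in the paragraph preceding Theorem \ref{mainresult2}, is that the quantitative estimates on the compactified metric depend only on the $C^{0}$ curvature bound (\ref{deacyofcurv}); the $t^{-1/2}$ blow-up of the derivative bound as $t\to 0$ is used only to produce harmonic coordinates for each positive $t$ and becomes invisible in the limit.

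First, I would run the normalized Ricci flow (\ref{nRicciflow}) starting from $g$, obtaining via Shi's theorem a smooth short-time solution $g(t)$ on $X\times(0,T_{0}]$. The preceding lemma supplies
$$
\|E(g(t))\|\le C\, e^{-a\rho_{0}},\qquad \|\nabla E(g(t))\|\le \frac{C}{\sqrt{t}}\, e^{-a\rho_{0}}.
$$
For each fixed $t>0$ the metric $g(t)$ therefore satisfies (\ref{deacyofcurv}) and (\ref{deacyofderivativeofcurv}) with $k=1$ and $b=a>2-\tfrac{1}{n+1}$; the $t$-dependence of the constant is irrelevant, as Theorem \ref{mainresult1} uses only the exponential rate. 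Since the essential set $\mathbf{E}$ persists as essential under the flow on a small time interval, Theorem \ref{mainresult1} applied to $(X,g(t))$ produces harmonic coordinates near infinity in which the compactified metric $\bar g(t)=\sinh^{-2}\rho\cdot g(t)$ is $W^{2,p}$ up to the boundary for some $p>n+1$.

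Next, I would inspect the proof of Theorem \ref{mainresult1} to isolate the fact that, once harmonic coordinates are in place, the metric components satisfy a quasilinear elliptic equation whose right-hand side is controlled by the Ricci tensor; standard elliptic $L^{p}$ estimates then bound $\|\bar g(t)\|_{W^{2,p}}$ and the harmonic radius purely in terms of the $C^{0}$ curvature of $\bar g(t)$. By (\ref{deacyofcurv}) this yields estimates uniform in $t\in(0,T_{0}]$. Along a sequence $t_{k}\to 0$, the compactness theorem of \cite{GP} then extracts a subsequence whose chart transition maps converge in $W^{3,p}$ to a $W^{3,p}$ differentiable structure $\Gamma$ on $\bar X$, and for which $\bar g(t_{k})\rightharpoonup \bar g_{\infty}$ weakly in $W^{2,p}$ and strongly in $C^{1,\mu'}$ for any $\mu'<\mu$. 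Since Shi's theorem gives $g(t)\to g$ smoothly on compact subsets of the interior, $\bar g_{\infty}$ coincides with the conformal compactification $\bar g$ of $g$ there, and the Sobolev embedding $W^{2,p}\hookrightarrow C^{1,\mu}$ with $p>n+1$ yields the first assertion of the theorem.

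For the refinement $a\ge 2$, the decay of $E(g)$ is strong enough that the right-hand side of the elliptic equation for $\bar g$ in harmonic coordinates lies in $L^{p}$ for every $p>1$; an elliptic $L^{p}$ bootstrap then gives $\bar g\in W^{2,p}$ and hence $C^{1,\mu}$ for all admissible $p$ and $\mu$. The main technical obstacle is the uniformity in $t$ of the harmonic charts produced by Theorem \ref{mainresult1} for $g(t)$, since the input bound on $\nabla Rm$ degenerates as $t\to 0$. This is precisely where the observation that the intrinsic estimates depend only on the $C^{0}$ curvature bound becomes crucial: it guarantees that the harmonic radii do not collapse and that the metrics are uniformly bounded in $W^{2,p}$, so that \cite{GP} delivers genuine convergence of charts and metrics rather than a merely pointwise limit.
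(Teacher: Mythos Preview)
Your proposal follows essentially the same strategy as the paper: run the normalized Ricci flow to gain the derivative decay needed for Theorem \ref{mainresult1}, then pass to the limit $t\to 0$ using the compactness machinery from \cite{GP}. Two points deserve sharpening, however.

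First, the paper inserts an extra mollification step (its Lemma 4.6): for each $t>0$ the compactified metric $\bar g(\cdot,t)$ is only $W^{2,p}$, and Theorem 5.4 of \cite{GP} is stated for smooth metrics, so one replaces $\bar g(\cdot,t)$ by a smooth mollification $\bar g^{\epsilon}(\cdot,t)$ and checks that the $L^{p}$ curvature bound and the uniform convergence to $\bar g(\cdot,0)$ survive. You omit this technicality.

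Second, your mechanism for obtaining uniform-in-$t$ harmonic charts is not quite the paper's. You propose to ``inspect the proof of Theorem \ref{mainresult1}'' and argue that its harmonic radius is governed by the $C^{0}$ curvature alone. But the Section 3 construction actually uses the Lipschitz bound on $\bar g$ (Lemma 3.1), which in turn rests on the derivative decay and hence degenerates as $t\to 0$. The paper does \emph{not} reuse those charts uniformly; instead it invokes Theorem 5.4 of \cite{GP} directly---a harmonic-radius estimate requiring only an $L^{p}$ curvature bound and a volume lower bound (both uniform in $t$ by Proposition \ref{estimatesofcompactifiedcurvature} and the $C^{0}$ convergence $\bar g^{k}\to\bar g$)---to produce \emph{new} harmonic coordinates on a fixed neighborhood $\mathcal V$. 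Once that common domain is secured, the elliptic $W^{2,p}$ estimates and the limit argument proceed exactly as you describe. So your outline is correct, but the uniform harmonic radius should be attributed to the \cite{GP} estimate itself rather than to a re-reading of the Section 3 construction.
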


One of the motivation to derive an intrinsic criterion for a
complete Riemannian manifold to be conformally compactifiable is to
find a rigidity theorem for a complete Riemannian manifold whose
curvature is asymptotically hyperbolic as in the work of Shi and
Tian \cite{ST}. Although many interesting rigidity theorems were
obtained (e.g. \cite{AD}, \cite{BMQ},\cite{Bi},\cite{DJ} \cite{Q1}
\cite{ST}) lately, most of them need to assume some regularity of
the comformally compactness of manifolds at infinity.  As
consequences of our regularity theorems we will state and prove
Theorem \ref{Einstein} and Theorem \ref{Riccicomparison} in \S 5.
Theorem \ref{Einstein} is a rigidity theorem for Einstein AH
manifolds; while Theorem \ref{Riccicomparison} requires the
curvature condition (2). And in both cases the rigidity in higher
dimensions requires the spin condition.

Very recently, in \cite{DJ}, a rigidity theorem for asymptotically
hyperbolic manifold with $Ric \geq - n g$ and admitting $C^2$
conformal compactification was proved. We find that our curvature
estimate (\ref{deacyofcurv}) in Proposition
\ref{estimatesofcompactifiedcurvature} and the above Theorem
\ref{mainresult2} together is a perfect substitute for the
regularity assumption in the rigidity theorem of \cite{DJ}. Our main
rigidity theorem is as follows:

\begin{theo}\label{rigidity}
Let $(X^{n+1}, \ g)$ be a complete manifold with $Ric \geq -ng$.
Suppose that it has an essential set $\mathbf E$ and it satisfies
the curvature condition (\ref{deacyofcurv}) with $a > 2$. Assume
also that $X^{n+1}$ is simply connected at the infinity. Then
$(X^{n+1}, \ g)$ is a standard hyperbolic space for $n\geq 4$.  And
it is a standard hyperbolic space if in addition we assume that
$\int_X \|Rm -\mathbf{K}\|d\mu_g <\infty$ for $n=3$.
\end{theo}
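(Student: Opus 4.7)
The strategy, flagged in the paragraph preceding the statement, is to apply the regularity theory of this paper in order to reduce the theorem to the $C^{2}$ rigidity result of \cite{DJ}. Since $a>2$ implies $a>2-\frac{1}{n+1}$, Theorem \ref{mainresult2} applies and produces a differentiable structure $\Gamma$ on $\bar X$ together with a conformal compactification $\bar g=\sinh^{-2}\rho\cdot g$ that is smooth in the interior of $X$ and lies in $W^{2,p}$ up to the boundary $\mathbf B$ for every $p>1$; in particular $\bar g$ is $C^{1,\mu}$ up to $\mathbf B$ for each $\mu\in(0,1)$.

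Even though $\bar g$ itself may fail to be of class $C^{2}$ up to the boundary, Proposition \ref{estimatesofcompactifiedcurvature} supplies a uniform pointwise bound on the Riemann tensor of $\bar g$ all the way to $\mathbf B$, depending only on the curvature hypothesis (\ref{deacyofcurv}) with $a>2$. The next step is to go carefully through the proof of the rigidity theorem in \cite{DJ} and verify that the $C^{2}$ assumption there is used only through a pointwise curvature bound of this form, together with the $W^{2,p}$ analytic framework already at our disposal. Concretely, wherever a second derivative of $\bar g$ enters via an elliptic or integral argument, the $W^{2,p}$ control suffices; wherever it enters pointwise through the Riemann tensor, the bound on $\overline{Rm}$ from Proposition \ref{estimatesofcompactifiedcurvature} can be substituted.

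Once this substitution is justified, the rest of the \cite{DJ} argument applies as given: the hypothesis $\mathrm{Ric}\geq -ng$, combined with simply connectedness at infinity and the hyperbolic-type curvature asymptotics, identifies the conformal infinity $(\mathbf B,[\bar g|_{\mathbf B}])$ with the standard round sphere, and a positive-mass/volume comparison argument for asymptotically hyperbolic manifolds then forces an isometry with $\mathbb{H}^{n+1}$. This chain of reasoning runs for $n\geq 4$ under the stated hypotheses. In dimension $n=3$, the hyperbolic volume growth $e^{3\rho}$ just outpaces the decay $e^{-a\rho}$ with $a>2$, so integrability of $\|Rm-\mathbf K\|$ is not automatic; the additional assumption $\int_{X}\|Rm-\mathbf K\|\,d\mu_{g}<\infty$ restores the integrability that is needed to close the argument in that borderline case.

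The main obstacle is the bookkeeping alluded to in the previous paragraph: one must verify, at each step of the \cite{DJ} proof, that any appeal to $C^{2}$ regularity of $\bar g$ can be replaced either by $W^{2,p}$ regularity with $p>n+1$ or by the pointwise Riemann bound from Proposition \ref{estimatesofcompactifiedcurvature}. No essentially new geometric input is needed beyond the regularity machinery developed in Sections 3 and 4 of this paper; the content of the theorem is that those regularity estimates are exactly the right substitute for a $C^{2}$ compactification in this rigidity setting.
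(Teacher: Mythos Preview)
Your plan is correct and matches the paper's own approach: use Theorem \ref{mainresult2} for $W^{2,p}$ regularity of $\bar g$, invoke Proposition \ref{estimatesofcompactifiedcurvature} for the pointwise curvature bound, identify the conformal infinity with the round sphere (via Theorem 2.6 of \cite{ST} and simple connectedness at infinity), and then run the Dutta--Javaheri volume-comparison argument. The paper does exactly this, except that instead of leaving the ``bookkeeping'' as a black box it explicitly reproves the key intermediate steps of \cite{DJ} in the weaker regularity setting: a gradient estimate $\|\nabla_g u\|\leq \Lambda e^{-\rho}$ for $u=t-\rho$ (using (\ref{estimatesp1}) rather than $C^2$ data), monotonicity of $t$ and $\rho$ along each other's geodesics, the homeomorphism $\Phi_t\colon \mathbf S^n\to\Sigma_t$, a measure-zero lemma for the cut locus, and finally the Yamabe-functional inequality combined with the scalar-curvature estimate $R_{\eta_\rho}\leq n(n-1)+o(1)$ to conclude $\lim_{t\to\infty}\mathrm{Vol}(\Sigma_t,\eta_t)\geq\omega_n$, whence Bishop--Gromov gives rigidity. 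One small correction: the argument is a pure volume comparison (Bishop--Gromov plus Yamabe), not a positive-mass theorem.
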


The rest of the paper is organized as follows. In \S2, we will
present some basic estimates which can be derived mostly just from
the Riccati equations and ODE analysis. In \S3, we construct
harmonic coordinates at the infinity, therefore prove Theorem
\ref{mainresult3} via the estimates for the curvature of the
compactified metrics. In \S4, we drop the curvature condition
(\ref{deacyofderivativeofcurv}) by using Ricci flow. Finally in \S5,
we prove rigidity results.

{\bf Acknowledgements} The authors would like to thank Professor
Gang Tian for his encouragement and many enlightening discussions.
The second and third named authors would like to thank
Mittag-Leffler institute for its hospitality and the wonderful
environment where part of the research in this paper was conducted
when they attended the program in mathematical relativity.

\section {Basic estimates}

In this section we present the basic estimates. First we introduce
some estimates that are consequences of the Riccati equations via
ODE analysis as Lemma 2.3 in \cite{ST} (see also in \cite{Ba},
\cite{BG}). Suppose that $(X^{n+1}, \ g)$ is a complete Riemannian
manifold with an essential set $\mathbf{E}$. And suppose that
(\ref{deacyofcurv}) holds. Let $\Sigma_\rho$ be the level surface of
the distance function $\rho$ to the essential set $\mathbf{E}$. For
simplicity we may take orthonormal frames on each slice
$\Sigma_\rho$, under which the second fundamental forms are denoted
by $h_{ij}$, then we recall Riccati's equations:
$$
\frac{\p h_{ij}}{\p \rho} +h_{ik}h_{kj}=R_{0i0j},
$$
where the index $0$ refers to unit normal direction of
$\Sigma_\rho$.

\begin{lemm}\label{estforriccatiequa}
Suppose that $f(\rho) \geq \frac 14$ is a smooth function for $\rho
> 0$ and that
$$
|f(\rho)-1|\leq L e^{-a \rho},
$$
for any $\rho>0$ and some $a, L >0$. If $y$ is the solution of the
equation
$$
\left\{
\begin{array}{ll}
    y' +y^2 =f  \\
    y(0)>0 \\
\end{array},
\right.
$$
\\
then there is a positive constant $C$ which depends only on $L$ and
$y(0)$ such that

\begin{enumerate}
    \item $|y-1|\leq C e^{-2\rho}$, if $a >2$;
    \item  $|y-1|\leq C \rho e^{-2\rho}$, if $a=2$;
    \item $|y-1|\leq C e^{-a \rho}$, if $0< a <2$.
\end{enumerate}
\end{lemm}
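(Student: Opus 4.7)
The plan is to linearize the Riccati equation $y' + y^2 = f$ around the formal equilibrium $y \equiv 1$ and apply an integrating-factor representation combined with a short bootstrap. First I would trap $y$ in a compact positive interval: since $y' = f - y^2$ with $f \geq \tfrac14$, the vector field points upward whenever $y < \tfrac12$, giving $y \geq c_1 := \min(y(0), \tfrac12) > 0$; symmetrically $f \leq 1 + L$ yields $y \leq c_2 := \max(y(0), \sqrt{1+L})$. Thus $y+1$ lies in a fixed subinterval of $(1,\infty)$, bounded above and bounded away from $1$.

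Setting $u := y - 1$ turns the Riccati equation into the \emph{exact} linear ODE $u' + (y+1)u = f-1$, for which the integrating factor $\psi(\rho) := \int_0^\rho (y(t)+1)\,dt$ gives the representation
\[
u(\rho) \;=\; e^{-\psi(\rho)} u(0) \;+\; \int_0^\rho e^{\psi(s)-\psi(\rho)} (f(s)-1)\,ds.
\]
From the crude bound $\psi(\rho)-\psi(s) \geq (1+c_1)(\rho-s)$ together with $|f-1|\leq L e^{-a\rho}$, a routine estimate of this integral yields a preliminary decay $|u(\rho)| \leq C e^{-\delta \rho}$ for some $\delta > 0$; in particular $u \in L^1([0,\infty))$.

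The integrability of $u$ is exactly what is needed to upgrade the decay rate to the sharp constant $2$. Indeed $\psi(\rho) = 2\rho + \int_0^\rho u(t)\,dt$, so $|\psi(\rho) - 2\rho| \leq C_0$ uniformly in $\rho$, and therefore $e^{\psi(s)-\psi(\rho)} \leq C_1 e^{-2(\rho-s)}$ for all $0 \leq s \leq \rho$. Substituting back into the representation,
\[
|u(\rho)| \;\leq\; C\, e^{-2\rho} \;+\; C'\, e^{-2\rho}\int_0^\rho e^{(2-a)s}\,ds,
\]
and evaluating this elementary integral separates into exactly the three cases of the lemma: it is bounded when $a>2$ (giving $|u| = O(e^{-2\rho})$), equals $\rho$ when $a=2$ (giving $O(\rho e^{-2\rho})$), and grows like $e^{(2-a)\rho}/(2-a)$ when $0 < a < 2$ (giving $O(e^{-a\rho})$).

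The main obstacle I foresee is this bootstrap itself: the coarse lower bound $\psi(\rho) \geq (1+c_1)\rho$ carries $c_1$ possibly as small as $\tfrac12$, so a one-shot Gronwall with this weight would be strictly sub-optimal. What saves the argument is that the rough decay already forces $u \in L^1$, which is precisely what converts the coefficient $y+1$ into $2 + o(1)$ in an integrable sense and lets the sharp exponent $2$ be recovered; everything else reduces to the elementary integral bookkeeping displayed above.
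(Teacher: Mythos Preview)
Your argument is correct. Both your proof and the paper's are bootstrap arguments that linearize around $y\equiv 1$, but the mechanics differ. The paper sets $v=y-1$, passes to the quadratic identity $(v^2)'+2(2+v)v^2=2v(f-1)$, and uses the crude inequality $(v^2)'+2v^2\leq 2v(f-1)$ (valid since $v>-1$) to obtain a first decay for $v^2$; it then iterates this and finally feeds the resulting bound on $v^2$ into the linear equation $v'+2v=(f-1)-v^2$ to reach the sharp rates. You instead keep the exact linear form $u'+(y+1)u=f-1$ with its variable-coefficient integrating factor $\psi$: the crude bound $y+1\geq 1+c_1>1$ gives some exponential decay of $u$, hence $u\in L^1$, hence $\psi(\rho)=2\rho+O(1)$, and one more pass through the Duhamel formula yields the three sharp cases directly. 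Your route is a bit cleaner in that it avoids the detour through the equation for $v^2$ and needs only a single iteration, while the paper's version makes the role of the constant coefficient $2$ visible earlier at the cost of an extra round of plugging back. Either way the crux is the same: one preliminary decay estimate suffices to upgrade the effective linear coefficient from $1+c_1$ to $2$.
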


\begin{proof} The proof is very similar to the proof of
Lemma 2.3 in \cite{ST}. First it is easy to see that
\begin{equation}\label{1bd}
0<y<C,
\end{equation}
for some constant $C$, which only depends on $L$ and $y(0)$. Let
$$
y=v+1.
$$
Then we get
$$
v' +2v =(f-1)-v^2
$$
and
$$
(v^2)'+2(2+v)\cdot v^2 =2v\cdot (f-1).
$$
Hence
$$
(v^2)' +2 v^2 <(v^2)'+2(2+v)\cdot v^2 =2v\cdot (f-1).
$$
Therefore, in the light of (\ref{1bd}) and the assumptions on the
function $f$, it follows that
$$
|v|^2\leq C e^{-2\rho}, ~~~if~a>2,
$$
$$
|v|^2\leq C \rho e^{-2\rho}, ~~~if~a=2,
$$
$$
|v|^2\leq C e^{-a\rho}, ~~~if~0<a<2.
$$
Plugging back those into the same equation we have the improved
estimates for $v^2$. Finally we finish the proof of Lemma
\ref{estforriccatiequa} using the equation
$$
v' +2v =(f-1)-v^2.
$$
\end{proof}

Consequently, if we write
$$
h_{ij}=\delta_{ij} + T_{ij}e^{-2\rho},
$$
we have

\begin{equation}\label{2ndfundamentalest1}
|T_{ij}|\leq C, \quad \text{if} \quad a > 2;
\end{equation}
\begin{equation}\label{2ndfundamentalest2}
|T_{ij}|\leq C\rho, \quad \text{if} \quad a = 2;
\end{equation}
and
\begin{equation}\label{2ndfundamentalest3}
|T_{ij}|\leq C e^{(2-\alpha)\rho}, \quad \text{if} \quad 0<  a < 2,
\end{equation}

Note that the constants $C$ in the above depend only on the second
fundamental forms of the level surface $\Sigma_0$ and the constant
in the assumption $(\ref{deacyofcurv})$. Similarly we also have the
estimates of the second fundamental forms $S_{ij}$ of $\Sigma_\rho$
under local coordinates $\{\frac{\p}{\p \theta^i}\}_{i=1}^{n}$ on
$\mathbf{B}$. Again if we write
$$
S^j _i =g^{kj} S_{ik},
$$
and
$$
S^j _i =\delta^j _i + p^j _ie^{-2\rho},
$$
then
\begin{equation}\label{estimatesp1}
|p^j _i|\leq C, \quad \text{if} \quad a>2;
\end{equation}

\begin{equation}\label{estimatesp2}
|p^j _i|\leq C\rho, \quad \text{if} \quad a=2;
\end{equation}
and
\begin{equation}\label{estimatesp3}
|p^j _i|\leq C e^{(2-\alpha)\rho}, \quad \text{if} \quad 0< a <2.
\end{equation}

Using those estimates on the second fundamental form of the level
set the distance function $\rho$ one can easily derive the following
estimates on the curvature of the compactified metric $\bar g$ as
follows:

\begin{prop}\label{estimatesofcompactifiedcurvature}
Let $(X^{n+1}, \ g)$ be an complete Riemannian manifold with an
essentail set $\mathbf{E}$ and satisfying the curvature condition
(\ref{deacyofcurv}) for $a > 0$. If
$g=d\rho^{2}+g_{ij}d\theta^{i}d\theta^{j}$, and
$\bar{g}=\sinh^{-2}\rho
g:=d\tau^{2}+\bar{g}_{ij}d\theta^{i}d\theta^{j}$, then we have
\begin{enumerate}
  \item $\| Rm(\bar g) \|_{\bar g}\leq \Lambda$, if $a >2;$
  \item $\| Rm(\bar g) \|_{\bar g}\leq \Lambda \rho $, if $a =2;$
  \item $\| Rm(\bar g) \|_{\bar g}\leq \Lambda e^{(2-a)\rho}$, if $0< a
  <2.$
\end{enumerate}
If in addition we assume the curvature condition
(\ref{deacyofderivativeofcurv}) for $k=1$ and $1 < b < 3$, then we
also have
$$
\|\bar \nabla  Rm(\bar g)\|_{\bar g} \leq \Lambda e^{(3-b)\rho}.
$$
Here $\Lambda$ is a constant depending only on $C$ in the
assumptions (\ref{deacyofcurv}) and (\ref{deacyofderivativeofcurv}).
\end{prop}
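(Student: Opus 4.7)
The plan is to decompose $Rm(\bar g)$ into its tangential, mixed and normal parts with respect to the foliation $\{\Sigma_\tau\}$, which coincides with $\{\Sigma_\rho\}$ because the conformal factor $\sinh^{-2}\rho$ depends only on $\rho$, and then to control each piece via the Gauss, Codazzi and Riccati equations, reading off the estimates from the second-fundamental-form bounds (\ref{estimatesp1})--(\ref{estimatesp3}) and from the curvature assumption (\ref{deacyofcurv}). The first computation is the second fundamental form $\bar S^i_{\ j}$ of $\Sigma_\tau$ in $\bar g$. Since the $\bar g$-unit normal is $\partial_\tau = -\sinh\rho\,\partial_\rho$, the chain rule gives
$$\bar S^i_{\ j} \;=\; \cosh\rho\,\delta^i_{\ j} - \sinh\rho\,S^i_{\ j} \;=\; -\sinh\rho\bigl(S^i_{\ j} - \coth\rho\,\delta^i_{\ j}\bigr).$$
Substituting $S^i_{\ j} = \delta^i_{\ j} + p^i_{\ j}e^{-2\rho}$ and $\coth\rho - 1 = O(e^{-2\rho})$, together with the conformal invariance of the norm of $(1,1)$-tensors, the bounds on $p^i_{\ j}$ yield $\|\bar S\|_{\bar g}$ bounded by $Ce^{-\rho}$, $C\rho e^{-\rho}$, and $Ce^{(1-a)\rho}$ in the cases $a>2$, $a=2$, $0<a<2$ respectively.

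Next, apply the Gauss equation in $g$, $K^\Sigma_{ijkl} = R_{ijkl} + S_{ik}S_{jl} - S_{il}S_{jk}$, inserting $R_{ijkl} = -(g_{ik}g_{jl}-g_{il}g_{jk}) + E_{ijkl}$ with $\|E\|_g \leq Ce^{-a\rho}$ and $S_{ij} = g_{ij} + g_{im}p^m_{\ j}e^{-2\rho}$. The hyperbolic-model terms cancel identically and $\|K^\Sigma\|_g = O(e^{-a\rho} + \|p\|e^{-2\rho} + \|p\|^2 e^{-4\rho})$. Because $\bar g|_\Sigma = \sinh^{-2}\rho\,g|_\Sigma$ is a constant rescaling on each $\Sigma_\rho$, the identity $\|\bar K^\Sigma\|_{\bar g} = \sinh^2\rho\,\|K^\Sigma\|_g$ produces exactly the three target bounds $\Lambda$, $\Lambda\rho$, $\Lambda e^{(2-a)\rho}$. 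Applying the Gauss equation in $\bar g$, $\bar R_{ijkl}(\bar g) = \bar K^\Sigma_{ijkl} - \bar S_{ik}\bar S_{jl} + \bar S_{il}\bar S_{jk}$, gives the tangential components of $Rm(\bar g)$, where the correction $O(\|\bar S\|^2_{\bar g})$ is dominated by the $\bar K^\Sigma$-bound in each regime. Codazzi in $\bar g$, $\bar R(\partial_\tau,X,Y,Z) = (\bar\nabla^\Sigma_Y \bar S)(X,Z) - (\bar\nabla^\Sigma_Z \bar S)(X,Y)$, handles the mixed components from the $\bar S$-bound and its tangential derivative. For the radial components, Riccati in $\bar g$, $\partial_\tau\bar S^i_{\ j} + \bar S^i_{\ k}\bar S^k_{\ j} = -\bar R^i_{\ 0j0}$, is treated by substituting the formula above for $\bar S$ and using Riccati in $g$ to eliminate $\partial_\rho S$ in favor of $R_{0i0j}$; the identical cancellation of model terms yields the same size bound.

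For the $\bar\nabla Rm$ estimate under the additional hypothesis (\ref{deacyofderivativeofcurv}) with $k=1$, $1<b<3$, the first step is to obtain $\|\nabla S\|_g = O(e^{-b\rho})$ by differentiating Riccati in $g$ along $\rho$ and applying Gronwall with inhomogeneity $\|\nabla Rm(g)\|_g \leq Ce^{-b\rho}$ (Codazzi in $g$ controls the tangential part of $\nabla S$ by $\|E\|$). Differentiating the Gauss, Codazzi and Riccati identities above then expresses $\bar\nabla Rm(\bar g)$ in terms of $\nabla Rm$, $\nabla S$, $S$, and $\rho$-derivatives of the conformal factor; each additional $\tau$-derivative or $\bar g$-index-raising contributes a factor $\sinh\rho$, so one derivative of a $(0,4)$-tensor acquires $\sinh^3\rho \cdot e^{-b\rho} = O(e^{(3-b)\rho})$, the claimed bound. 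The principal obstacle throughout is the bookkeeping of cancellations: $R_{ijkl}$, $S^i_{\ j}-\delta^i_{\ j}$ and $\bar S^i_{\ j}$ are individually of the size of their hyperbolic-model values (not merely of size $O(e^{-a\rho})$), so one must verify that the model parts cancel exactly in Gauss, Codazzi and Riccati in order that only the genuinely small $O(e^{-a\rho})$ and $O(e^{-b\rho})$ remainders survive the $\sinh^2\rho$-rescaling into $\bar g$-norms.
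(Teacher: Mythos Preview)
Your strategy is essentially the paper's: decompose $Rm(\bar g)$ along the foliation via Gauss--Codazzi--Riccati, feed in the Riccati-based bounds (\ref{estimatesp1})--(\ref{estimatesp3}) on $p^i_{\ j}$, and track the $\sinh\rho$-powers that appear under the conformal rescaling; for $\bar\nabla Rm(\bar g)$ you differentiate Riccati tangentially to control $\partial_k S^j_i$, exactly as the paper does. The paper carries this out by writing every component of $\bar R$ and of $\bar\nabla\bar R$ explicitly in terms of $E$, $p$, $\partial_\rho p$ and $\partial_k p$, whereas you give the structural sketch; but the content is the same.

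There is one genuine soft spot. For the mixed components you invoke Codazzi in $\bar g$ and say it is ``handled \dots\ from the $\bar S$-bound and its tangential derivative.'' Under the hypothesis (\ref{deacyofcurv}) alone you have \emph{no} bound on the full tangential derivative $\bar\nabla^\Sigma\bar S$: without (\ref{deacyofderivativeofcurv}) one does not even know $\bar g$ is Lipschitz, so tangential derivatives of $\bar S$ are not a priori controlled. What saves the argument is that Codazzi in $\bar g$ needs only the \emph{antisymmetric} combination $(\bar\nabla^\Sigma_j\bar S)_{ik}-(\bar\nabla^\Sigma_k\bar S)_{ij}$; since $\bar g|_\Sigma$ and $g|_\Sigma$ differ by a constant on each leaf, the intrinsic connections agree, and since $\bar S_{ik}=\sinh^{-1}\!\rho\,S_{ik}-\cosh\rho\,\sinh^{-2}\!\rho\,g_{ik}$ with $\nabla^\Sigma g|_\Sigma=0$, this antisymmetric combination equals $\sinh^{-1}\!\rho\bigl((\nabla^\Sigma_j S)_{ik}-(\nabla^\Sigma_k S)_{ij}\bigr)=\sinh^{-1}\!\rho\,E_{0ijk}$ by Codazzi in $g$. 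That closes the gap and recovers the paper's direct formula $\bar R^l_{0jk}=-\sinh\rho\,E^l_{0jk}$; you should say this rather than appeal to an unestablished bound on $\bar\nabla^\Sigma\bar S$. A similar caution applies to your parenthetical ``Codazzi in $g$ controls the tangential part of $\nabla S$'': Codazzi controls only the skew part, and the full $\partial_k S^j_i$ genuinely requires the differentiated Riccati argument with inhomogeneity $\partial_k R^j_{0i0}$, which in turn uses (\ref{deacyofderivativeofcurv}) together with the Lipschitz bound on $\bar g$ from \cite{Ba},\cite{BG} to pass between $\nabla_k$ and $\partial_k$.
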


\begin{proof} Let $\bar{S}$ be the
second fundamental form of $\Sigma_\rho$ in $(X^{n+1}, \ \bar g)$.
Since there exists a constant $\Lambda$ such that in local
coordinates $\{\mathcal{U},(\rho,\theta^{1},...,\theta^{n})\},$
$$
\frac{1}{\Lambda}e^{2\rho}\delta_{ij}\leq g_{ij}\leq \Lambda
e^{2\rho}\delta_{ij},
$$
we have
$$
|\bar{g}_{ij}|\leq C.
$$
Then by a direct computation we get
$$
\p_{\tau}\bar{g}_{ij}=2(\frac{\cosh\rho}{\sinh^{2}\rho}g_{ij}-\frac{S_{ij}}{\sinh\rho})=-
2\bar{S}_{ij}.
$$
Hence we have
$$
\bar{S}_{ij}=\frac{e^{-2\rho}}{\sinh\rho}p_{ij} - e^{-\rho}\bar
g_{ij}
$$
and
$$
\bar S_i^j = e^{-2\rho}\sinh\rho p_i^j - e^{-\rho}\delta_i^j.
$$
Using Gauss identity, we want to express $\bar{R}_{ijk}^{l}$ in
terms of $E_{ijk}^{l}$ and $p_{i}^{j}$ as follows:
$$
\bar{R}_{ijk}^{l}= e^{-2\rho}(\delta_{j}^{l}\bar
g_{ik}-\delta_{i}^{l}\bar g_{jk})+ E_{ijk}^{l} - e^{-2\rho}\coth\rho
(p_{ik}\delta_{j}^{l}+p_{j}^{l}g_{ik}-p_{i}^{l}g_{jk}-p_{jk}\delta_{i}^{l}).
$$
On the other hand, we may calculate directly
$$
\bar{R}_{0jk}^{l} = - \sinh\rho \ E_{0jk}^{l}
$$
and
$$
\bar{R}_{0i0}^{k} = \sinh^{2}\rho\cdot E_{0i0}^{k}- e^{-2\rho}\cdot
\sinh\rho\cdot \cosh\rho\cdot p_{i}^{k}+e^{-\rho}\cdot \cosh\rho
\cdot \delta_{i}^{k}.
$$
Those readily imply the estimates for the curvature of the
compactified metric $\bar g$ with the estimates (\ref{estimatesp1}),
(\ref{estimatesp2}) and (\ref{estimatesp3}).

To obtain the estimate on the normal derivatives of the second
fundamental form $\bar S$, we derive from the Riccati equations that
$$
\p_\rho p_{i}^{j}=e^{2\rho}E_{0i0}^{j}-e^{-2\rho}p_{i}^{k}p_{k}^{j},
$$
which implies
\begin{equation}\label{estimateNderivatives}
|\p_\rho p_{i}^{j}|\leq \Lambda e^{(2-a)\rho}.
\end{equation}

\noindent To obtain the estimate on the tangential derivatives of
the 2nd fundamental form $\bar S$, we will take derivatives in the
two sides of the Riccati's equation. But first
$$
R^j_{0i0,k} = \p_{k}R^j_{0i0} - R^j_{li0}\Gamma^l_{0k} -
R^j_{0l0}\Gamma^l_{ik} -R^j_{0il}\Gamma^l_{0k} +
R^l_{0i0}\Gamma^j_{lk},
$$
which implies
$$
|\p_{k}R^j_{0i0}| \leq \Lambda e^{-(b-1)\rho},
$$
where we have used the fact that the compactified metric $\bar g$ is
Lipschitz when $k=1$ and $b>1$ in (\ref{deacyofderivativeofcurv})
and $a>0$ in (\ref{deacyofcurv}) (cf. \cite{Ba} and \cite {BG}).
Then from Riccati's equations, we have
$$
\p_{\rho}\p_{k} S^j_i+\p_{k} S^l_i\cdot S^j_l +\p_{k} S^j_l\cdot
S^l_i=\p_{k}R^j_{0i0}.
$$
Considering the ODE system
$$
y'+(2+\Omega)y = O(e^{-(b-1)\rho}),
$$
where
\begin{eqnarray}
|\Omega|\leq \left\{\begin{array}{l}\Lambda e^{-2\rho}, a > 2,\\
\Lambda \rho e^{-2\rho},~~ a = 2,\\
\Lambda e^{-a\rho},~~ 0 < a <2,
\end{array}
\right.\nonumber
\end{eqnarray}\\
and an argument similar to the one in the proof of Lemma
\ref{estforriccatiequa}, we then have
\begin{equation}\label{tangentialderivestimate2ndfundfoms}
|\p_{k} S^j_i|\leq \Lambda e^{-(b-1)\rho},
\end{equation}
\noindent where $\Lambda$ is a positive constant independent of
$\rho$ and $\theta$. Hence

\begin{equation}\label{estimateTderivatives}
|\p_k p_{i}^{j}|\leq \Lambda e^{-(b-1)\rho}.
\end{equation}

Now we are ready to get the estimates on the covariant derivatives
of curvature by direct computations.

\begin{equation}
\begin{split}
\bar{R}_{ijk,m}^{l}&=E_{ijk,m}^{l}-\frac{\cosh\rho}{\sinh\rho}
(E_{0kji}\delta_{m}^{l}-E_{0jk}^{l}g_{mi}+E_{0ik}^{l}g_{mj}-E_{ij0}^{l}g_{mk})\\
&-\frac{e^{-2\rho}\cosh\rho}{\sinh\rho}(\p_{m}p_{ik}
\delta_{j}^{l}-\p_{m}p_{jk}\delta_{i}^{l}+p_{j}^{n}g_{ik}\Gamma_{mn}^{l}-p_{i}^{n}g_{jk}
\Gamma_{mn}^{l}\\
&-p_{nk}\delta_{j}^{l}\Gamma_{mi}^{n}+p_{nm}\delta_{i}^{l}\Gamma_{mj}^{n}+
p_{n}^{l}g_{jk}\Gamma_{mi}^{n}
-p_{n}^{l}g_{ik}\Gamma_{mj}^{n}\\
&-p_{in}\delta_{j}^{l}\Gamma_{mk}^{n}+p_{jn}\delta_{i}^{l}\Gamma_{mk}^{n})
\end{split}
\end{equation}

\begin{equation}
\begin{split}
\bar{R}_{ijk,0}^{l} & = - \sinh\rho E_{ijk,0}^{l}-2\cosh\rho
E_{ijk}^{l}+\frac{2e^{-3\rho}}{\sinh\rho}(\delta_{j}^{l}g_{ik}-\delta_{i}^{l}g_{jk})\\
& + (\frac{e^{-4\rho}}{\sinh\rho} -
2e^{-3\rho}\cosh\rho)(p_{ik}\delta_{j}^{l} + p_{j}^{l} g_{ik}-
p_{i}^{l} g_{jk}- p_{jk} \delta_{i}^{l}) \\
& + 2e^{-2\rho}\cosh\rho (p_{j}^{l}g_{ik}- p_{i}^{l} g_{jk}) -
2e^{-4\rho}\cosh\rho (p_{mk}p_{i}^{m}\delta_{j}^{l}
- p_{mk} p_{j}^{m} \delta_{i}^{l})\\
& + e^{-2\rho} \cosh\rho (\p_{\rho} p_{ik} \delta_{j}^{l} +
\p_{\rho} p_{j}^{l} g_{ik} - \p_{\rho} p_{i}^{l} g_{jk} - \p_{\rho}
p_{jk} \delta_{i}^{l})
\end{split}
\end{equation}

\begin{equation}
\begin{split}
\bar{R}_{0jk,m}^{l} & = -\sinh\rho\cdot
E_{0jk,m}^{l} + \cosh\rho\cdot(E_{0jk}^{0}\delta_{m}^{l}-E_{mjk}^{l} - E_{0j0}^{l} g_{mk})\\
& + (\frac{e^{-3\rho}}{\sinh^{2}\rho} -
\frac{e^{-2\rho}\cosh\rho}{\sinh^{2}\rho}) g_{jk}\delta_{m}^{l}
- (\frac{e^{-3\rho}}{\sinh^{2}\rho} + \frac{e^{-2\rho}\cosh\rho}{\sinh^{2}\rho}) g_{mk}\delta_{j}^{l}\\
& + (\frac{e^{-4\rho}}{\sinh\rho} + \frac{2e^{-3\rho}\cosh\rho}{\sinh\rho}) p_{mk}\delta_{j}^{l}\\
& - \frac{e^{-4\rho}}{\sinh\rho} g_{jk}p_{m}^{l} + \frac{2e^{-3\rho}\cosh\rho}{\sinh\rho} g_{mk}p_{j}^{l}\\
& - e^{-4\rho}\cosh\rho (p_{nk}p_{m}^{n} \delta_{j}^{l} -
g_{jk}p_{m}^{n} p_{n}^{l} + 2p_{jb}^{l} p_{mk}).
\end{split}
\end{equation}

\begin{equation}
\bar{R}_{0jk,0}^{l} = \sinh^{2}\rho\cdot E_{0jk,0}^{l}+ 2
\sinh\rho\cdot\cosh\rho\cdot E_{0jk}^{l}
\end{equation}

\begin{equation}
\begin{split}
\bar{R}_{0j0,m}^{l} & = \sinh^{2}\rho\cdot E_{0j0,m}^{l} +
\sinh\rho\cdot \cosh\rho (E_{mj0}^{l}
+ E_{0jm}^{l}) \\ & - e^{-2\rho}\sinh\rho\cdot \cosh\rho\cdot\p_{m}p_{j}^{l}\\
& + e^{-2\rho}\sinh\rho\cdot \cosh\rho(p_{m}^{l}\Gamma_{nj}^{m} -
p_{j}^{m} \Gamma_{nm}^{l})
\end{split}
\end{equation}

\begin{equation}
\begin{split}
\bar{R}_{0j0,0}^{l} & = - \sinh^{3}\rho\cdot E_{0j0,0}^{l} -
2\sinh^{2}\rho\cdot \cosh\rho E_{0j0}^{l}
\\ & +(e^{-2\rho}\sinh\rho\cdot\cosh^{2}\rho + e^{-2\rho} \sinh^{3}\rho
- 2 e^{-3\rho}\sinh^{2}\rho\cdot\cosh\rho) p_{j}^{l}\\
& + e^{-2\rho}\sinh^{2}\rho\cdot \cosh\rho\cdot \p_{\rho}p_{j}^{l} +
e^{-2\rho}\sinh\rho\delta_{j}^{l}
\end{split}
\end{equation}

\noindent Combine above calculations with
(\ref{estimateNderivatives}), (\ref{estimateTderivatives}) and
assumption (\ref{deacyofderivativeofcurv}), we arrive at
$$
\| \bar \nabla  Rm(\bar g)\|_{\bar g} \leq \Lambda e^{(3-b)\rho}.
$$
Thus we complete the proof of the proposition.
\end{proof}

Consequently we have

\begin{prop}\label{basicestimate}
Suppose $(X^{n+1}, \ g)$ is a complete Riemannain manifold with an
essential set $\mathbf{E}$, and that it satisfies the curvature
condition (\ref{deacyofcurv}) with $a > 0$. Then
\begin{enumerate}

    \item \label{1stnormderiestimate} Near the boundary $\tau=0$, if $ a > 2$, then
    $|\frac{\p \bar g_{ij}}{\p \tau}|\leq C \tau$; if $a = 2$, then $|\frac{\p \bar g_{ij}}{\p \tau}|\leq C
    |\tau\log\tau|$; if $0< a <2$, then $|\frac{\p \bar g_{ij}}{\p \tau}|\leq C
    \tau^{\alpha-1}$.

    \item   \label{2ndnormderiestimate} Near the boundary $\tau=0$, if $a > 2$,
    then $|\frac{\p^2 \bar g_{ij}}{\p \tau^2}|\leq C $; if $a = 2$,
    then $|\frac{\p^2 \bar g_{ij}}{\p \tau^2}|\leq C |\log\tau|$; if $0< a <2$, then $|\frac{\p^2 \bar g_{ij}}{\p
    \tau^2}|\leq C \tau^{a - 2}$.

    \item \label{Lip} $\bar g_{ij}(\tau, \theta)$ is lipschitz up to the boundary $\tau=0$,
    if the condition $(\ref{deacyofderivativeofcurv})$ holds with $k=1$ and $b >1$.

    \item \label{2mixderiestimate} Near the boundary $\tau=0$, $|\frac{\p^2 \bar g_{ij}}{\p \tau \p
    \theta^l}|\leq C \tau^{b-2} $, if the curvature condition $(\ref{deacyofderivativeofcurv})$ with $k=1$ and $b > 1$.

\end{enumerate}
\end{prop}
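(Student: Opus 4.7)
The plan is to derive all four estimates from the decomposition of $\bar S_{ij}$ and the Riccati equation that already appeared in the proof of Proposition \ref{estimatesofcompactifiedcurvature}. Recall from that proof the identity
\begin{equation*}
\partial_\tau \bar g_{ij} = -2\bar S_{ij} = -\frac{2e^{-2\rho}}{\sinh\rho}\,p_{ij} + 2e^{-\rho}\,\bar g_{ij},
\end{equation*}
together with the pointwise bounds (\ref{estimatesp1})--(\ref{estimatesp3}) on $p^j_i$ and the tangential-derivative estimate (\ref{estimateTderivatives}) for $\partial_k p^j_i$. Under the substitution $\tau=\log\frac{e^\rho+1}{e^\rho-1}$, one has $e^{-\rho}\sim \tau/2$ as $\rho\to\infty$, so every factor $e^{-c\rho}$ translates cleanly to $\tau^c$ up to constants.

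For parts (\ref{1stnormderiestimate}) and (\ref{2ndnormderiestimate}) the idea is to plug in and differentiate. Inserting the bounds on $p^j_i$ into the displayed identity and converting to the $\tau$ variable yields (\ref{1stnormderiestimate}) in each of the three regimes $a>2$, $a=2$, $0<a<2$. For (\ref{2ndnormderiestimate}), differentiate the identity once more in $\tau$ and invoke the $\bar g$-Riccati equation $\partial_\tau \bar S^i_j+\bar S^i_k\bar S^k_j=\bar R^i_{0j0}(\bar g)$ to write $\partial_\tau^2 \bar g_{ij}$ as a polynomial in $\bar S$, $\bar g$ and the compactified curvature $\bar R_{i0j0}$; the three cases of the stated bound then follow from Proposition \ref{estimatesofcompactifiedcurvature} combined with the estimates on $\bar S$ established in the previous step.

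For (\ref{2mixderiestimate}), I apply $\partial_{\theta^l}$ to the identity and use $p_{ij}=\sinh^2\rho\,\bar g_{ik}p^k_j$ to expand
\begin{equation*}
\partial_{\theta^l}\partial_\tau\bar g_{ij}= -2e^{-2\rho}\sinh\rho\bigl[\partial_{\theta^l}\bar g_{ik}\,p^k_j+\bar g_{ik}\,\partial_{\theta^l}p^k_j\bigr] + 2e^{-\rho}\,\partial_{\theta^l}\bar g_{ij}.
\end{equation*}
The term containing $\partial_{\theta^l}p^k_j$ is bounded by $Ce^{-b\rho}\sim C\tau^b$ by (\ref{estimateTderivatives}), while the remaining pieces couple $\partial_\tau\partial_{\theta^l}\bar g_{ij}$ linearly to $\partial_{\theta^l}\bar g_{ij}$ with coefficient of size at most $C\tau^{a-1}$ (integrable in $\tau$ since $a\ge b>1$). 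Viewing this as a linear ODE system in $\tau$ for the tangential derivatives with integrable source of size $O(\tau^b)$, a backward Gronwall argument starting from some fixed $\tau_0>0$ gives uniform bounds for $|\partial_{\theta^l}\bar g_{ij}|$ on $(0,\tau_0]$. Substituting back into the expansion yields $|\partial_\tau\partial_{\theta^l}\bar g_{ij}|\le C(\tau^{a-1}+\tau^b)\le C\tau^{b-2}$, proving (\ref{2mixderiestimate}). Part (\ref{Lip}) then follows from (\ref{2mixderiestimate}) by direct integration in $\tau$: the integrability of $\tau^{b-2}$ near $\tau=0$ for $b>1$ shows that $\partial_{\theta^l}\bar g_{ij}$ extends continuously and boundedly to the boundary, while the boundedness of $\partial_\tau\bar g_{ij}$ when $a\ge b>1$ is immediate from (\ref{1stnormderiestimate}).

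The main subtlety I expect is in closing the Gronwall step of part (\ref{2mixderiestimate}) in the delicate regime $1<a<2$, where the multiplier of $\partial_{\theta^l}\bar g_{ij}$ arising from expanding $\partial_{\theta^l}p_{ij}$ is only borderline integrable; one must shrink $\tau_0$ so that $\int_0^{\tau_0}\tau^{a-1}\,d\tau$ is small enough to absorb the homogeneous coupling. Beyond that, everything reduces to bookkeeping with the conversion factors $e^{-\rho}\sim\tau$, $\sinh\rho\sim e^\rho/2$ and the already established pointwise bounds on $p^j_i$ and $\|Rm(\bar g)\|$.
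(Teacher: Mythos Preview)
Your plan for parts (\ref{1stnormderiestimate}) and (\ref{2ndnormderiestimate}) matches the paper's almost verbatim: the paper inserts the bounds (\ref{estimatesp1})--(\ref{estimatesp3}) into $\partial_\tau\bar g_{ij}=2e^{-\rho}\bar g_{ij}-2\frac{e^{-2\rho}}{\sinh\rho}p_{ij}$, and for (\ref{2ndnormderiestimate}) differentiates once more and uses (\ref{estimateNderivatives}) directly rather than passing through the $\bar g$-Riccati equation and Proposition~\ref{estimatesofcompactifiedcurvature}. These are equivalent.

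For (\ref{Lip}) and (\ref{2mixderiestimate}) your route differs from the paper's. The paper simply \emph{cites} \cite{Ba} for the Lipschitz bound (\ref{Lip}), and then, knowing that $\partial_{\theta^l}\bar g_{ij}$ is already bounded, reads off (\ref{2mixderiestimate}) from the single line $\partial_\tau\partial_{\theta^l}\bar g_{ij}=2e^{-\rho}\partial_{\theta^l}\bar g_{ij}-2\frac{e^{-2\rho}}{\sinh\rho}\partial_{\theta^l}p_{ij}$ together with (\ref{estimateTderivatives}). You instead run a Gronwall argument in $\tau$ to produce the bound on $\partial_{\theta^l}\bar g_{ij}$ yourself, then recover (\ref{2mixderiestimate}) and (\ref{Lip}). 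That is correct, but be aware that it does not make the argument independent of \cite{Ba}: the estimate (\ref{estimateTderivatives}) that you feed in as the source term was itself established in Proposition~\ref{estimatesofcompactifiedcurvature} \emph{using} the Lipschitz bound on $\bar g$ from \cite{Ba} (to control the tangential Christoffel terms $\bar\Gamma^l_{ik}$ appearing in $\partial_k R^j_{0i0}$). So your ``self-contained'' derivation of (\ref{Lip}) is logically downstream of the same external input.

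Two minor remarks. First, your worry about needing $\int_0^{\tau_0}\tau^{a-1}\,d\tau$ to be small is unnecessary: Gronwall only requires integrability of the coefficient, not smallness, and $\tau^{a-1}$ is integrable for every $a>0$. Second, once Gronwall has given you uniform bounds on $\partial_{\theta^l}\bar g_{ij}$, part (\ref{Lip}) is already done; you do not need to integrate (\ref{2mixderiestimate}) to recover it.
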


\begin{proof}
For (\ref{1stnormderiestimate}), recall that
\begin{equation}
\p_\tau \bar g_{ij} = 2e^{-\rho}\bar g_{ij} - 2\frac
{e^{-2\rho}}{\sinh\rho} p_{ij}. \nonumber
\end{equation}
Hence it is easily seen that (\ref{1stnormderiestimate}) holds in
the light of (\ref{estimatesp1}), (\ref{estimatesp2}),
(\ref{estimatesp3}) and the fact that
$$
\tau = \log (1 + \frac 2{e^\rho -1}) \sim e^{-\rho}, \quad\text{as
$\rho\to\infty$}.
$$
As for (\ref{2ndnormderiestimate}) we calculate
$$
\frac{\partial^2\bar{g}_{ij}}{\partial\tau^{2}}  =
2(e^{-\rho}\p_\tau \bar g_{ij} + e^{-\rho}\sinh\rho\bar g_{ij}) -
2(2e^{-2\rho}p_{ij}  + e^{-2\rho}coth\rho p_{ij} - e^{-2\rho}\p_\rho
p_{ij})
$$
Hence (\ref{2ndnormderiestimate}) in this proposition holds in the
light of (\ref{estimateNderivatives}). (\ref{Lip}) in this
proposition was proved in \cite{Ba}. Therefore we have only
(\ref{2mixderiestimate}) in this proposition left to be proven.
Again we calculate
$$
\p_\tau\p_{\theta^l}\bar g_{ij} = 2 e^{-\rho}\p_{\theta^l}\bar
g_{ij} - 2\frac{e^{-2\rho}}{\sinh\rho}\p_{\theta^l}p_{ij}.
$$
Thus (\ref{2mixderiestimate}) in this proposition is proven due to
(\ref{estimateTderivatives}).
\end{proof}

So far we only employed ODE analysis and Riccati's equations to
derive estimates on the compactified metric $\bar g$. To end this
section we include a simple fact of calculus for later use.

\begin{lemm}\label{elementarylemma}
Suppose $f(x,y)$ is a function on $\mathbf{R}^{n-1} \times [0,
+\infty)$ and that
$$
|\nabla f|\leq C y^{-\delta},
$$
for some $\delta\in (0, 1)$ and a positive constant $C$. Then there
is a constant $\Lambda$ that depends only on $C$ such that
$$
\| f\|_{C^{0, 1-\delta}(\mathbf{R}^{n-1} \times [0, +\infty))} \leq
\Lambda.
$$
\end{lemm}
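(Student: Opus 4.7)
The plan is to show that $f$ extends to a Hölder continuous function on $\mathbb{R}^{n-1} \times [0,+\infty)$ with exponent $1-\delta$, and the main obstacle is that $|\nabla f|\leq C y^{-\delta}$ blows up as $y\to 0$, so the naive approach of integrating $|\nabla f|$ along the straight segment joining two points fails whenever that segment runs close to the boundary $\{y=0\}$. The strategy is to estimate vertical and horizontal displacements separately, and for horizontal ones to detour upward to a well-chosen height that dodges the singular behavior.

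Fix $P_1=(x_1,y_1)$ and $P_2=(x_2,y_2)$ with, without loss of generality, $y_1\le y_2$, and set $h=|x_1-x_2|$, $v=y_2-y_1$. For the vertical part I would integrate $|\partial_y f|\le C y^{-\delta}$ along the vertical segment at $x=x_2$ to get
\begin{equation*}
|f(x_2,y_1)-f(x_2,y_2)|\leq C\int_{y_1}^{y_2} y^{-\delta}\,dy =\frac{C}{1-\delta}\bigl(y_2^{1-\delta}-y_1^{1-\delta}\bigr)\leq \frac{C}{1-\delta}\,v^{1-\delta},
\end{equation*}
using the elementary concavity inequality $b^{1-\delta}-a^{1-\delta}\leq (b-a)^{1-\delta}$ valid for $0\leq a\leq b$ and $0<1-\delta\leq 1$; this is the case that would go through essentially without complication since $y^{-\delta}$ is integrable near zero.

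For the horizontal part I would use the auxiliary lift: insert intermediate points $(x_1,y_1+s)$ and $(x_2,y_1+s)$ at a height $y_1+s$ to be chosen, and apply the triangle inequality together with the previous vertical estimate (for the two vertical legs) and the trivial bound $|\partial_x f|\le C(y_1+s)^{-\delta}$ along the horizontal leg at height $y_1+s$:
\begin{equation*}
|f(x_1,y_1)-f(x_2,y_1)|\leq \frac{2C}{1-\delta}\,s^{1-\delta}+C(y_1+s)^{-\delta}\,h.
\end{equation*}
The key (and the main point of the lemma) is to balance these two terms: taking $s=h$ and using $(y_1+h)^{-\delta}\leq h^{-\delta}$ gives $|f(x_1,y_1)-f(x_2,y_1)|\leq \Lambda h^{1-\delta}$ with $\Lambda$ depending only on $C$ and $\delta$.

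Combining the two pieces through the intermediate point $(x_2,y_1)$ yields
\begin{equation*}
|f(P_1)-f(P_2)|\leq \Lambda\bigl(h^{1-\delta}+v^{1-\delta}\bigr)\leq 2\Lambda\,|P_1-P_2|^{1-\delta},
\end{equation*}
which is the Hölder seminorm estimate claimed. The sup-norm bound entering $\|f\|_{C^{0,1-\delta}}$ in the intended application will either come from a uniform bound on $f$ available from the context in which the lemma is applied, or by restricting to a bounded region and noting that Hölder control plus one-point control is enough; in either case the constant depends only on $C$ (and on the diameter of the reference region, which is harmless here).
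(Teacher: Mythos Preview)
Your argument is correct and follows essentially the same route as the paper: control the vertical displacement by integrating $y^{-\delta}$ directly, and control the horizontal displacement by lifting to a height comparable to $|x_1-x_2|$ so that $y^{-\delta}|x_1-x_2|$ becomes $|x_1-x_2|^{1-\delta}$. The only cosmetic difference is that the paper splits into two cases (according to whether $y_2\ge |x_1-x_2|$ or not) and in the first case uses the existing height $y_2$ rather than introducing an extra lift, whereas you uniformly lift by $s=h$ from the lower point; your version avoids the case distinction at the cost of one redundant vertical leg, and both yield the same bound.
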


\begin{proof}
For any two points $(x_1, y_1), (x_2, y_2) \in \mathbf{R}^{n-1}
\times [0, +\infty)$ with $y_2 \geq y_1$, we consider the following
two cases:

\noindent{\bf Case 1}: Suppose $y_2 \geq |x_1 -x_2|$, then we have

\begin{equation}
\begin{split}
|f(x_1, y_1)-f(x_2, y_2)|&\leq |f(x_1, y_1)-f(x_1, y_2)|+|f(x_1,
y_2)-f(x_2, y_2)|\\
&\leq \frac{C}{1-\delta}(y^{1-\delta}_2 -y^{1-\delta}_1) +C
y^{-\delta}_2
|x_1 - x_2|\\
&\leq \frac{C}{1-\delta}|y_1 -y_2|^{1-\delta} +C|x_1 -
x_2|^{1-\delta},
\end{split}
\end{equation}
\noindent hence in this case the lemma is proven.

\noindent{\bf Case 2}: Suppose $y_2 \leq |x_1 -x_2|$, in this case,
take $y_0= |x_2 -x_1|$, then we have

\begin{equation}
\begin{split}
|f(x_1, y_1) & -f(x_2, y_2)|\leq |f(x_1, y_1)-f(x_1, y_0)|+|f(x_1,
y_0)-f(x_2, y_0)|\\
&+ |f(x_2, y_0)-f(x_2, y_2)|\\
& \leq \frac{C}{1-\delta}(y^{1-\delta}_0 -y^{1-\delta}_1) +
\frac{C}{1-\delta}(y^{1-\delta}_0 -y^{1-\delta}_2)+ C y^{-\delta}_0
|x_1 - x_2|\\
&\leq \frac{C}{1-\delta}y^{1-\delta}_0 +C|x_1 - x_2|^{1-\delta}\\
&\leq \Lambda |x_1 - x_2|^{1-\delta}
\end{split}
\end{equation}
\noindent Thus, we see in both cases Lemma is true.
\end{proof}

\section {Harmonic coordinates at infinity}

From the previous section we know that the normal derivatives of the
compactified metric $\bar g$ is well under control when the
curvature condition (\ref{deacyofcurv}) holds with reasonably large
$a > 0$, and so is the curvature of $\bar g$. But to use only ODE
analysis to control the metric $\bar g$, even just in $C^{0,
\alpha}$ norm, one needs to assume curvature condition
(\ref{deacyofderivativeofcurv}) (cf. \cite{Ba} and \cite{BG}). In
this section we will make use of elliptic PDE to improve the basic
estimates established in the previous section. Our approach is very
straightforward. We want to construct a harmonic coordinate system
near by the infinity to translate the estimates of curvature to the
better regularity of the metric $\bar g$. Particularly when $a
> 1$ the boundary $\mathbf{B}^n =
\partial X^{n+1}$ is totally geodesic with respect to the compactified
metric $\bar g$. Recall
$$
\bar g =d\tau^2 +\bar g_{ij}(\tau, \theta)d\theta^i d\theta^j
$$
in a local coordinate near by the infinity, $(0, \epsilon]\times
\mathbf{B}$ for some small $\epsilon$, as we chose before. Therefore
we can build a double $N = [-\epsilon, \epsilon]\times \mathbf{B}$
of the manifold $(0, \epsilon]\times \mathbf{B}$ and extend the
metric $\bar g$ to the double $N$ evenly. It is clear that the
doubled metric $\bar g$ is lipschitz when the curvature condition
(\ref{deacyofcurv}) holds with $a >0$ and the curvature condition
(\ref{deacyofderivativeofcurv}) holds with $k=1$ and $b > 1$. We now
start to construct a harmonic coordinate in the double $N$ across
the boundary $\{0\}\times\mathbf{B}$.

\begin{lemm}\label{approxharmoniccoord}
Suppose that $(X^{n+1}, \ g)$ is a complete Riemanaian manifold with
an essential set $\mathbf{E}$. Suppose that the curvature condition
(\ref{deacyofcurv}) holds with $a >0$ and the curvature condition
(\ref{deacyofderivativeofcurv}) holds with $k=1$ and $b > 1$. Let
$(\tau=\theta^0, \theta^1, \theta^2, \cdots \theta^n)$ be a local
coordinate system out of the product structure near the infinity of
$X$ as before. Then there exists a constant $C$ independent of
$\tau$ and $\theta$ such that on a subset $[-\tau_0, \tau_0]\times
\mathcal{O}$ of the double $N$, we have
$$
|\bar \Delta  \theta^\alpha|\leq C, \quad \forall \ \alpha = 0, 1,
\cdots, n,
$$
where $\mathcal{O}$ is an open set in $\mathbf{B}$, $\tau_0$ is some
small number and $\bar \Delta $ is the Laplacian operator with
respect to the metric $\bar g$.
\end{lemm}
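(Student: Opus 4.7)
The plan is to compute $\bar\Delta\theta^\alpha$ directly in the product coordinates on the doubled manifold $N$ and to show that only derivatives of $\bar g_{ij}$ that are already controlled by the estimates of Section 2 actually appear. The key feature that makes the calculation short is the block-diagonal form of the metric: $\bar g_{00}=1$, $\bar g_{0i}=0$, and $\sqrt{|\bar g|}=\sqrt{\det(\bar g_{ij})}$ depends only on the tangential block. Applying the divergence formula
$$\bar\Delta f = \frac{1}{\sqrt{|\bar g|}}\,\partial_\beta\bigl(\sqrt{|\bar g|}\,\bar g^{\beta\gamma}\partial_\gamma f\bigr)$$
to $f=\theta^\alpha$ gives the two reduced expressions
$$\bar\Delta\tau = \tfrac{1}{2}\bar g^{ij}\partial_\tau \bar g_{ij}, \qquad \bar\Delta\theta^k = \frac{1}{\sqrt{|\bar g|}}\,\partial_j\bigl(\sqrt{|\bar g|}\,\bar g^{jk}\bigr) = \tfrac{1}{2}\bar g^{jk}\bar g^{lm}\partial_j \bar g_{lm} - \bar g^{jl}(\partial_j \bar g_{lm})\bar g^{mk},$$
so that $\bar\Delta\tau$ involves only the normal derivative $\partial_\tau\bar g_{ij}$, and $\bar\Delta\theta^k$ involves only tangential derivatives $\partial_l\bar g_{ij}$ together with components of $\bar g^{ij}$.

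For $\bar\Delta\tau$ I would invoke Proposition \ref{basicestimate}(1). Note that the hypotheses $a>0$ and $b>1$ together with the remark $a\geq b$ from the introduction imply $a\geq b>1$, so the bound on $|\partial_\tau \bar g_{ij}|$ is respectively $O(\tau)$, $O(|\tau\log\tau|)$, or $O(\tau^{a-1})$, all tending to $0$ as $\tau\to 0$ and bounded on $[-\tau_0,\tau_0]$. Together with uniform bounds on $\bar g^{ij}$ that follow from the Lipschitz regularity of $\bar g$ up to the boundary (Proposition \ref{basicestimate}(3)), this gives $|\bar\Delta\tau|\leq C$ at once. The term $\bar\Delta\theta^k$ reduces to showing a uniform pointwise bound on the tangential derivatives $|\partial_l\bar g_{ij}|$ throughout $[-\tau_0,\tau_0]\times\mathcal{O}$. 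To obtain this, I would fix some interior slice $\tau=\epsilon_0>0$ (where $\bar g$ is smooth, hence $|\partial_l \bar g_{ij}(\epsilon_0,\theta)|$ is bounded on the compact chart $\overline{\mathcal O}$) and integrate the mixed-derivative bound of Proposition \ref{basicestimate}(4):
$$\partial_l \bar g_{ij}(\tau,\theta) - \partial_l \bar g_{ij}(\epsilon_0,\theta) = -\int_\tau^{\epsilon_0}\partial_s\partial_l\bar g_{ij}(s,\theta)\,ds,\qquad |\partial_s\partial_l\bar g_{ij}|\leq C\,s^{b-2}.$$
Since $b>1$, the integral is finite as $\tau\to 0^+$ and the bound on $|\partial_l\bar g_{ij}|$ is uniform down to $\tau=0$; by evenness of $\bar g$ in $\tau$ this extends across to $\tau<0$ on the double. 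Inserting into the formula for $\bar\Delta\theta^k$ gives the desired bound.

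The main technical point — and the step I expect to be the principal obstacle — is the last one: transferring the mixed-derivative estimate $|\partial_\tau\partial_l\bar g_{ij}|\leq C\tau^{b-2}$ into a pointwise bound on $|\partial_l\bar g_{ij}|$ that is uniform all the way up to $\tau=0$. The hypothesis $b>1$ enters precisely here through the integrability of $s^{b-2}$ on $(0,\epsilon_0]$; without it, the tangential derivatives could diverge at the boundary and the candidate approximate-harmonic coordinates $\theta^\alpha$ would fail to have a bounded Laplacian. Everything else is bookkeeping with the warped-product form of $\bar g$ and the estimates already collected in Section 2.
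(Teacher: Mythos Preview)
Your argument is correct and is essentially an explicit unpacking of the paper's one-line proof, which simply observes that since $\bar g$ is Lipschitz under these hypotheses (by \cite{Ba}, \cite{BG}; this is Proposition~\ref{basicestimate}(\ref{Lip})), the quantity $\bar\Delta\theta^\alpha = -\bar g^{\beta\gamma}\bar\Gamma^\alpha_{\beta\gamma}$, being built from first derivatives of $\bar g$ and its inverse, is bounded. Your direct computation in the block-diagonal coordinates and your integration of the mixed-derivative estimate of Proposition~\ref{basicestimate}(\ref{2mixderiestimate}) to control the tangential derivatives amount to re-deriving that Lipschitz bound in the specific directions needed, so the two approaches coincide in substance.
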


\begin{proof} This is a simply consequence of the fact that the
metric $\bar g$ is Lipschitz when the curvature condition
(\ref{deacyofcurv}) holds with $a >0$ and the curvature condition
(\ref{deacyofderivativeofcurv}) holds with $k=1$ and $b > 1$ by the
work in \cite{Ba} and \cite{BG}.
\end{proof}

Let $\phi=(\theta^0, \theta^1, \cdots, \theta^n)$ and $D_{3\tau_1}
\subset (-\tau_0, \tau_0)\times \mathcal{O}$ be a geodesic ball in
the double $N$ centered on the boundary $\{0\}\times\mathbf{B}$, we
consider the following Dirichlet problem:
\begin{equation}\label{Diriproblem}
\left\{
\begin{array}{ll}
    \bar \Delta \psi =0, \quad \text{in}  \quad D_{3\tau_1}, \\
    \psi|_{\p D_{3\tau_1}} =\phi|_{\p D_{3\tau_1}}, \\
\end{array}
\right.
\end{equation}
Let $\xi = \psi -\phi$ and $\theta=\tau_{1}z$. Then we have
\begin{equation}\label{Diriproblem}
\left\{
\begin{array}{ll}
    \mid \bar\Delta_{z}\xi \mid\leq C\tau_{1}^2 \quad \text{in}  \quad D_{3} \\
    \xi|_{\p D_{3}} =0. \\
\end{array}
\right.
\end{equation}
Hence by $W^{2,p}$ interior estimates we obtain
$$
\|\xi\|_{W^{2,p}(D_{2})} \leq C \tau^{2}_1,
$$
where $p>1$ is arbitrary and $C$ depends on $n$, $p$ and the
constant of lemma \ref{approxharmoniccoord}. Due to the Sobolev
embedding theorem we have
$$
\|\xi\|_{C^{1,\mu}(D_{2})} \leq C \tau^{2}_1, ~~~if p > n+1,
$$
here $\mu=1-\frac{n}{p}$. By rescaling back to $x$-variable,
$$
\|\xi\|_{C^{1,\mu}(D_{2\tau_1})} \leq C \tau^{1-\mu}_1, ~~~if p > n
+ 1,
$$
here $C$ is a constant independent of $\tau$, $\tau_1$ and $\theta$.
Therefore, by choosing $\tau_1$ sufficiently small we see $\psi$ is
harmonic coordinates in $D_{2\tau_1}$ and there exists a constant
$\delta_{0}>0$ independent of $\tau,\tau_{1},$ and $\theta$ such
that $|\det (D\psi)|\geq \delta_{0}$ for all point in $D_{2\tau_1}$.

Let $(D_{2\tau_{1}},y^\gamma)$, $0\leq \gamma \leq n$, be the
harmonic coordinates we just constructed, in the following, we try
to get some higher order smoothness for those coordinates near the
boundary. First we take the advantage of (\ref{2mixderiestimate}) in
Proposition \ref{basicestimate}. Namely,

\begin{lemm}\label{higherordersmooth}
Suppose that $(y^0, y^1, \cdots, y^n)$ is the harmonic coordinates
constructed above. Then
$$
\| \frac{\p}{\p \tau} y^\gamma\|_{C^{1,\mu}(D_{\frac{3}{2}\tau_1})}
\leq C,
$$
where $\mu=1-\frac{n+1}{p}$, $C$ is a constant, and $p$ satisfies
\begin{enumerate}
    \item \label{greaterthantwo} $p > n+1$
    if the curvature condition (\ref{deacyofcurv}) holds with $a >0$ and the curvature
    condition (\ref{deacyofderivativeofcurv}) holds with $k=1$ and $b \geq 2$;
    \item \label{lesserthantwo} $p \in (n+1, \frac{1}{2-b})$ if the curvature
    condition (\ref{deacyofcurv}) holds with $a >0$ and the curvature condition
    (\ref{deacyofderivativeofcurv}) holds with $k=1$ and $2 - \frac 1{n+1} < b < 2$.
\end{enumerate}
Moreover the harmonic coordinate functions $y^\gamma$ are all even
with respect to the variable $\tau$.
\end{lemm}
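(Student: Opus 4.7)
The plan is to differentiate the harmonic coordinate equation $\bar\Delta y^\gamma = 0$ in $\tau$, regard the result as an elliptic PDE for $u^\gamma := \partial y^\gamma/\partial\tau$, and then re-run the $W^{2,p}$-interior-estimate plus Sobolev-embedding scheme that produced $y^\gamma$ in the first place. Concretely, from $\bar g^{\alpha\beta}(\partial_\alpha\partial_\beta y^\gamma - \bar\Gamma^\delta_{\alpha\beta}\partial_\delta y^\gamma)=0$ I get, after a $\tau$-derivative,
\[
\bar g^{\alpha\beta}\partial_\alpha\partial_\beta u^\gamma - \bar g^{\alpha\beta}\bar\Gamma^\delta_{\alpha\beta}\partial_\delta u^\gamma = F^\gamma,
\]
with $F^\gamma$ schematically of the form $(\partial_\tau\bar g)\cdot\partial^2 y^\gamma+(\partial_\tau\partial\bar g)\cdot\partial y^\gamma$. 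The factors coming from $y^\gamma$ itself are already under $L^p$ and $C^{0,\mu}$ control from the construction preceding the lemma, so the task reduces to bounding $F^\gamma$ in $L^p$ for the claimed range of $p$.

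The dominant contribution to $F^\gamma$ is the mixed derivative $\partial_\tau\partial_{\theta^k}\bar g_{ij}$, which by (\ref{2mixderiestimate}) is pointwise at worst $C\tau^{b-2}$. Its $L^p$-norm on a half-collar $[0,\tau_1]\times\mathcal O$ is finite precisely when $p(b-2)>-1$: this is automatic for $b\ge 2$, giving case (\ref{greaterthantwo}), and forces $p<1/(2-b)$ in case (\ref{lesserthantwo}). The hypothesis $b>2-\tfrac{1}{n+1}$ is exactly what keeps $1/(2-b)>n+1$, so the stated range for $p$ is non-empty. The remaining pieces of $F^\gamma$ are milder: $\partial_\tau^2\bar g$ from (\ref{2ndnormderiestimate}) is no worse than $\tau^{a-2}$ and is integrable in the same range because $a\ge b$; $\partial_\tau\bar g$ and $\partial_{\theta^k}\bar g$ are bounded by (\ref{1stnormderiestimate}) and (\ref{Lip}) for $b>1$. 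After the rescaling $\theta=\tau_1 z$ already used in (\ref{Diriproblem}) I apply the interior $W^{2,p}$-estimate to $u^\gamma$ on $D_{3\tau_1/2}$ and then the Sobolev embedding $W^{2,p}\hookrightarrow C^{1,\mu}$ with $\mu=1-(n+1)/p$ to conclude.

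For the parity assertion, I use a uniqueness-plus-symmetry argument. The evenly extended $\bar g$ on the double $N$ is invariant under the reflection $\sigma:(\tau,\theta)\mapsto(-\tau,\theta)$, so $\bar\Delta$ commutes with $\sigma^\ast$. Choosing $D_{3\tau_1}$ centered on $\{\tau=0\}$, and hence $\sigma$-invariant, the Dirichlet datum $\phi=(\theta^0,\theta^1,\ldots,\theta^n)$ splits by parity in $\tau$: $\theta^0=\tau$ is odd and $\theta^i$ is even for $i\ge 1$. Uniqueness for the harmonic Dirichlet problem then forces each $y^\gamma$ to inherit the parity of its boundary datum, which is the reflection symmetry across $\{\tau=0\}$ needed in the sequel.

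The main obstacle is the sharp bookkeeping at the threshold: one must confirm that $\partial_\tau\partial_{\theta^k}\bar g$, rather than $\partial_\tau^2\bar g$, is what caps $p$, and this uses the crucial inequality $a\ge b$ recalled before Theorem~\ref{mainresult3}. A subsidiary technicality is justifying termwise $\tau$-differentiation of the harmonic equation at the present regularity of $y^\gamma$; difference quotients together with the $W^{2,p}$-estimate for $y^\gamma$ just established handle this cleanly.
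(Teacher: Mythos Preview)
Your argument is essentially the paper's own: both proofs pass to difference quotients $\delta_h y^\gamma$, use $\|y^\gamma\|_{W^{2,p}}\le C$ together with the bound $\partial_\tau\partial_{\theta^k}\bar g_{ij}\in L^p$ from (\ref{2mixderiestimate}) exactly when $(b-2)p>-1$, and then apply interior $W^{2,p}$-estimates plus Sobolev embedding. Your bookkeeping of the subsidiary terms and the role of $a\ge b$ is a bit more explicit than the paper's terse write-up, but the mechanism is identical.

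One remark on the parity clause: the paper simply invokes the maximum principle and asserts all $y^\gamma$ are even. Your uniqueness-plus-symmetry argument is the same idea spelled out, but you correctly observe that $y^0$ (with boundary datum $\tau$) is \emph{odd} while the $y^i$ are even. This is in fact a small imprecision in the lemma's wording; what is actually used downstream in Corollary~\ref{ptauw1p} and Proposition~\ref{weakcurvature} is that the coordinate change intertwines the reflection $\tau\mapsto-\tau$ with $y^0\mapsto-y^0$, which is exactly what your formulation gives.
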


\begin{proof}
Recall, in local coordinate $\phi$
$$
\bar g =d\tau^2 + \bar g_{ij}d \theta^{i} d \theta^{j}, ~~~1\leq
i,j\leq n.
$$
Let
$$
\delta_{h} y =\frac{y(\tau+h, \theta)-y(\tau, \theta)}{h},
$$
for $(\tau,\theta^1, \cdots, \theta^n)\in D_{\frac{3}{2}\tau_1}$ and
$|h|<\frac{\tau_{1}}{4}$. Then
$$
\delta_{h} \bar \Delta y=0,
$$
in $D_{\frac{3}{2}\tau_1}$. From the fact that
$$
\|y\|_{W^{2,p}(D_{2\tau_1})}\leq C,
$$
and
$$
\frac{\p^{2}\bar{g}_{ij}}{\p\tau\p x}\in L^{p},
$$
due to (\ref{2mixderiestimate}) in Proposition \ref{basicestimate}
when $(b - 2)p > -1$, we get
$$
\| \bar g^{ij}\frac{\p^2}{\p x^i \p x^j} \delta_{h} y \|_{L^p
(D_{\frac{3}{2}\tau_0})}\leq C,
$$
which implies
$$
\| \delta_{h} y\|_{W^{2,p}(D_{\frac{3}{2}\tau_1})}\leq C.
$$
Let $h$ tend to zero. Then the Lemma follows from the Sobolev
embedding theorem. And evenness of each harmonic coordinate function
follows simply from the maximum principle.
\end{proof}

As a corollary, we have

\begin{coro}\label{ptauw1p}
Suppose that the curvature condition (\ref{deacyofcurv}) holds with
$a >0$ and the curvature condition (\ref{deacyofderivativeofcurv})
holds with $k=1$ and $b > 2 -\frac 1{n+1}$.  And let $\hat
g_{\alpha\beta}$ be the components of metric $\bar g$ in the above
harmonic coordinates. Then $\frac{\p}{\p \tau}\hat
g_{\alpha\beta}\in W^{1, p}(\mathcal{U})$ for $p\in
(n+1,\frac{1}{2-b})$, where $\mathcal{U}\subseteq
D_{\frac{3}{2}\tau_{1}}$. Moreover $\hat g_{\alpha\beta}$ is even
with respect to $\tau$ and
$$
\frac{\p}{\p \tau}\hat g_{\alpha\beta} = 0 \quad\text{at $\tau =0$}.
$$
\end{coro}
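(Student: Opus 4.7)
My plan is to derive the corollary directly from the tensor transformation law using Lemma \ref{higherordersmooth} and Proposition \ref{basicestimate}, with the parity and boundary vanishing following from the reflection symmetry of the doubled metric on $N$.

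First, since $\psi=(y^0,\ldots,y^n)$ is a diffeomorphism on $D_{\frac{3}{2}\tau_1}$ (because $|\det D\psi|\geq \delta_0$), its inverse $\theta(y)=(\theta^0(y),\ldots,\theta^n(y))$ with $\theta^0=\tau$ is well defined, and
\[
\hat g_{\alpha\beta}(y) \;=\; \bar g_{\mu\nu}\bigl(\theta(y)\bigr)\,\frac{\partial\theta^\mu}{\partial y^\alpha}\,\frac{\partial\theta^\nu}{\partial y^\beta}.
\]
Lemma \ref{higherordersmooth} gives $\partial_\tau y^\gamma\in C^{1,\mu}\cap W^{2,p}$ for $p\in(n+1,1/(2-b))$, while $y\in W^{2,p}$; inverting the Jacobian relation $(\partial_\sigma y^\gamma)(\partial_{y^\alpha}\theta^\sigma)=\delta_\alpha^\gamma$ shows that the first column $(\partial_{y^0}\theta^\mu)$ of $D\theta$ inherits the $W^{2,p}$ bound, while the remaining entries of $D\theta$ lie in $W^{1,p}$.

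Second, apply $\partial/\partial y^0$ (written $\partial/\partial\tau$ below) to the identity, and then an arbitrary further $\partial/\partial y^\beta$ to estimate $\|\partial_\tau\hat g_{\alpha\beta}\|_{W^{1,p}}$. The chain rule produces terms combining derivatives of $\bar g_{ij}$ in $\tau$ and $\theta^k$ with entries of $D\theta$ and $D^2\theta$. The normal derivatives $\partial_\tau\bar g$, $\partial_\tau^2\bar g$ and the mixed derivative $\partial_\tau\partial_{\theta^k}\bar g$ are controlled pointwise by items (\ref{1stnormderiestimate}), (\ref{2ndnormderiestimate}), and (\ref{2mixderiestimate}) of Proposition \ref{basicestimate}, respectively. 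The only term dictating the exponent range is $|\partial_\tau\partial_{\theta^k}\bar g_{ij}|\leq C\tau^{b-2}$, which belongs to $L^p$ precisely when $(2-b)p<1$, forcing the upper bound $p<1/(2-b)$; the lower bound $p>n+1$ is inherited from the Sobolev embedding used in Lemma \ref{higherordersmooth}. Collecting, $\partial_\tau\hat g_{\alpha\beta}\in W^{1,p}(\mathcal{U})$ as claimed.

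Third, for parity, the extended metric on the double $N$ is invariant under the reflection $(\tau,\theta)\mapsto(-\tau,\theta)$. In the Dirichlet problem \eqref{Diriproblem}, the data for $y^0$ is the odd function $\tau$ and for $y^i$ the even function $\theta^i$, so by the maximum principle $y^0$ is odd and $y^i$ is even in $\tau$. Hence the map $y\mapsto(-y^0,y^1,\ldots,y^n)$ corresponds via $\psi^{-1}$ to the reflection on the original side, and is therefore an isometry of $\hat g$; thus $\hat g_{\alpha\beta}$ is even in $y^0$. Its $y^0$-derivative is then odd in $y^0$, and the Sobolev embedding $W^{1,p}\hookrightarrow C^0$ (valid since $p>n+1$) makes this function continuous at $y^0=0$, forcing $\partial_\tau\hat g_{\alpha\beta}=0$ there.

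The main obstacle is that purely tangential second derivatives $\partial_\theta^2\bar g_{ij}$ are not controlled by the present hypotheses (the raw metric being only Lipschitz in $(\tau,\theta)$), so the chain-rule expansion must be organized so that every second-order derivative of $\bar g$ appearing in the computation is either purely normal ($\partial_\tau^2\bar g$) or mixed normal--tangential ($\partial_\tau\partial_{\theta^k}\bar g$). This is possible precisely thanks to the $W^{2,p}$ bound on the normal column of $D\theta$ furnished by Lemma \ref{higherordersmooth}, which lets any uncontrolled tangential differentiation be transferred onto a Jacobian factor rather than onto $\bar g$ itself.
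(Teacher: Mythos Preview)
Your route—differentiate the transformation law and invoke Lemma \ref{higherordersmooth} with Proposition \ref{basicestimate}—is exactly what the paper intends; that is why the statement is a corollary. But two steps in your execution fail.

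First, $\partial/\partial y^0$ and $\partial/\partial\tau$ are different operators, and conflating them is not harmless. Lemma \ref{higherordersmooth} controls the $\tau$-row $(\partial_\tau y^\gamma)$ of $D\psi$; matrix inversion does not convert this into control of the $y^0$-column $(\partial_{y^0}\theta^\mu)$ of $(D\psi)^{-1}$, so your inheritance claim is unjustified. Worse, differentiating $\bar g_{\mu\nu}(\theta(y))$ twice in the $y$-variables unavoidably produces, via the chain rule, purely tangential second derivatives $\partial_{\theta^i}\partial_{\theta^j}\bar g_{kl}$, and the product rule does not let you ``transfer'' both hits onto the Jacobian factor. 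The fix is simple: view $\hat g_{\alpha\beta}\circ\psi$ as a function of $(\tau,\theta)$ and apply the honest $\partial_\tau$. Then $\partial_\tau\bigl(\bar g_{ij}\,B^i_\alpha B^j_\beta\bigr)$ with $B=(D\psi)^{-1}$ involves only $\partial_\tau\bar g_{ij}$ and $\partial_\tau B$, and a further $\partial_{\theta^\sigma}$ gives only $\partial_\sigma\partial_\tau\bar g_{ij}$ (handled by items (\ref{2ndnormderiestimate}) and (\ref{2mixderiestimate}) of Proposition \ref{basicestimate}) together with derivatives of $B$ controlled by $\partial_\tau y\in W^{2,p}$ and $y\in W^{2,p}$. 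No $\partial_\theta^2\bar g$ ever appears, and the $L^p$ bound follows precisely when $(2-b)p<1$.

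Second, your parity conclusion overshoots. You are right (indeed more accurate than the wording in Lemma \ref{higherordersmooth}) that $y^0$ is odd and $y^i$ even in $\tau$, but the reflection $y\mapsto(-y^0,y')$ being an isometry of $\hat g$ forces $\hat g_{00}$ and $\hat g_{ij}$ to be even while $\hat g_{0i}$ is \emph{odd}; not all components are even. What is actually needed downstream in Proposition \ref{weakcurvature} is only that $\partial_\tau\hat g_{\alpha\beta}\in W^{1,p}\hookrightarrow C^0$, so that the one-sided normal derivatives across $T$ match and the boundary contributions cancel; this holds regardless of parity.
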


As we doubled the manifold and extend the metric to the doubled
manifold $N$ evenly, we have

\begin{equation}
\bar Rm(\tau, \theta)=\left\{
\begin{array}{ll}
    \bar Rm(\tau, \theta), \quad \tau >0, \\
    \bar Rm(-\tau, \theta), \quad \tau < 0.  \\
\end{array}
\right.
\end{equation}
To finally utilize the curvature equations we need
\begin{prop}\label{weakcurvature}

Suppose that the curvature condition (\ref{deacyofcurv}) holds with
$a >0$ and the curvature condition (\ref{deacyofderivativeofcurv})
holds with $k=1$ and $b > 2 -\frac 1{n+1}$. Then $\hat
g_{\alpha\beta}$ is weak solution to the following equations
$$
\frac{1}{2}\bar \Delta \hat g_{\alpha\beta} + Q_{\alpha\beta}(\p
\hat g,\hat g)=-\hat R_{\alpha\beta},
$$
in $\mathcal{U}$, where $\bar\nabla$ is gradient operator with
respect to metric $\bar g$, $Q(\p \hat g,\hat g)$ is bilinear form
of $\hat g$ and its first derivative, $d\bar V$ is the volume form
with respect to $\bar g$, $\hat R_{\alpha\beta}$ is component of
Ricci curvature tensor in $(\mathcal{U}, y^\gamma)$ excluding the
boundary $\tau=0$.
\end{prop}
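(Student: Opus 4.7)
The plan is to first verify the identity pointwise in $\mathcal{U}\setminus\{\tau=0\}$ and then promote it to a weak identity across $\{\tau=0\}$ by a cutoff argument, exploiting the vanishing of $\partial_\tau \hat g_{\alpha\beta}$ at $\tau=0$ from Corollary~\ref{ptauw1p}. Away from $\{\tau=0\}$ the original metric $g$ is smooth, so $\bar g$ is smooth on $\mathcal{U}\setminus\{\tau=0\}$; since the $y^\gamma$ are $\bar g$-harmonic, elliptic regularity upgrades them to $C^\infty$ there, and hence $\hat g_{\alpha\beta}$ is smooth off the boundary. In harmonic coordinates I would invoke the classical Ricci identity
$$
\hat R_{\alpha\beta} = -\tfrac12\,\hat g^{\gamma\delta}\partial_\gamma\partial_\delta \hat g_{\alpha\beta} + Q_{\alpha\beta}(\partial \hat g,\hat g) = -\tfrac12\,\bar\Delta \hat g_{\alpha\beta} + Q_{\alpha\beta}(\partial \hat g,\hat g),
$$
the second equality using $\bar\Delta y^\gamma=0$ to drop the Christoffel contraction when the Laplacian acts on scalar components.

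To extend this across $\{\tau=0\}$, I would fix a test function $\varphi\in C_c^\infty(\mathcal U)$ and a smooth cutoff $\chi_\epsilon(\tau)$ with $\chi_\epsilon=0$ for $|\tau|\le\epsilon$, $\chi_\epsilon=1$ for $|\tau|\ge 2\epsilon$, and $|\chi_\epsilon'|\le 2/\epsilon$. Since $\chi_\epsilon\varphi$ is compactly supported in the smooth region, integrating the classical identity by parts on the Laplacian term gives
$$
-\tfrac12\!\int_{\mathcal U}\!\bar g\bigl(\bar\nabla \hat g_{\alpha\beta},\bar\nabla(\chi_\epsilon\varphi)\bigr)\,d\bar V + \int_{\mathcal U}\!\chi_\epsilon\varphi\,(Q_{\alpha\beta}+\hat R_{\alpha\beta})\,d\bar V = 0.
$$
The bulk contributions (where $\bar\nabla$ hits $\varphi$ rather than $\chi_\epsilon$) converge as $\epsilon\to 0$ by dominated convergence: $\hat g_{\alpha\beta}$ being Lipschitz puts $\bar\nabla \hat g_{\alpha\beta}$ and $Q_{\alpha\beta}$ in $L^\infty_{\mathrm{loc}}$, while Proposition~\ref{estimatesofcompactifiedcurvature} (together with $a\ge b>2-\tfrac{1}{n+1}>1$, so that the worst bound $|\hat R|\lesssim \tau^{a-2}$ is integrable in $\tau$) places $\hat R_{\alpha\beta}\in L^1_{\mathrm{loc}}$.

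The hard part is the commutator term
$$
-\tfrac12\!\int_{\mathcal U}\!\varphi\,\chi_\epsilon'(\tau)\,\partial_\tau \hat g_{\alpha\beta}\,d\bar V,
$$
whose integrand carries the dangerous factor $|\chi_\epsilon'|\lesssim 1/\epsilon$ on a slab of thickness $O(\epsilon)$. This is where Corollary~\ref{ptauw1p} is decisive: it guarantees $\partial_\tau \hat g_{\alpha\beta}\in W^{1,p}$ for some $p>n+1$ with vanishing trace on $\{\tau=0\}$, so Sobolev embedding into $C^{0,1-(n+1)/p}$ combined with this vanishing yields $|\partial_\tau \hat g_{\alpha\beta}|\le C|\tau|^{1-(n+1)/p}$ near the boundary. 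The commutator integral is then controlled by $\epsilon^{-1}\cdot\epsilon^{1-(n+1)/p}\cdot\epsilon = \epsilon^{1-(n+1)/p}\to 0$, precisely because we may choose $p$ strictly greater than $n+1$. Passing to the limit in the identity above recovers the weak formulation claimed in the proposition.
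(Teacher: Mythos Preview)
Your argument is correct and follows essentially the same strategy as the paper: both reduce the weak identity across $\{\tau=0\}$ to the vanishing of $\partial_\tau \hat g_{\alpha\beta}$ on that hypersurface supplied by Corollary~\ref{ptauw1p}. The only cosmetic difference is that the paper splits $\mathcal U$ into the two half-domains $\mathcal U_\pm$, integrates by parts on each to produce boundary integrals $\int_T (\partial_{\vec n_\pm}\hat g_{\alpha\beta})\,\eta\,d\bar S$, and kills these directly, whereas you achieve the same cancellation via a cutoff and a H\"older decay estimate on $\partial_\tau\hat g_{\alpha\beta}$; both are standard and equivalent realizations of the same idea.
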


\begin{proof}
Let
$$
\mathcal{U}_+ =\{(\tau, \theta)\in \mathcal{U}|\tau\geq 0\},
$$
and
$$
\mathcal{U}_- =\{(\tau, \theta)\in \mathcal{U}|\tau\leq 0\},
$$
$$
T=\mathcal{U}_+ \bigcap \mathcal{U}_-,
$$
$\vec{n}_\pm$ be the outward unit normal vector of $\mathcal{U}_\pm$
on $T$. Since $\hat g $ is smooth in $\mathcal{U}$ except on $T$, we
have, for each smooth function supported inside $\mathcal{U}$,
$$
\frac{1}{2}(\int_T \frac{\p \hat g_{\alpha\beta}}{\p
\vec{n}_+}\cdot\eta d\bar S-\int_{\mathcal{U}_+}\bar \nabla \hat
g_{\alpha\beta}\cdot\bar \nabla \eta d\bar V)+\int_{\mathcal{U}_+}
Q_{\alpha\beta}(\p \hat g,\hat g)\cdot \eta d\bar V
=-\int_{\mathcal{U}_+} \hat R_{\alpha\beta}\cdot\eta d\bar V,
$$
and
$$
\frac{1}{2}(\int_T \frac{\p \hat g_{\alpha\beta}}{\p
\vec{n}_-}\cdot\eta d\bar S-\int_{\mathcal{U}_-}\bar \nabla \hat
g_{\alpha\beta}\cdot\bar \nabla \eta d\bar V)+\int_{\mathcal{U}_-}
Q_{\alpha\beta}(\p \hat g,\hat g)\cdot \eta d\bar V
=-\int_{\mathcal{U}_-} \hat R_{\alpha\beta}\cdot\eta d\bar V.
$$
Here we used Proposition \ref{estimatesofcompactifiedcurvature} and
Corollary \ref{ptauw1p}. Note that on $T$,
$$
\frac{\p }{\p \vec{n}_-}=\frac{\p}{\p \tau}, \quad \frac{\p }{\p
\vec{n}_+}=-\frac{\p}{\p \tau}.
$$
By Corollary \ref{ptauw1p}, we hence obtain
$$
\int_T\frac{\p \hat g_{\alpha\beta}}{\p \vec{n}_+}\cdot\eta d\bar
S=\int_T\frac{\p \hat g_{\alpha\beta}}{\p \vec{n}_-}\cdot\eta d\bar
S=0,
$$
which completes the proof of the Proposition.
\end{proof}

Now we are ready to use harmonic coordinates at the infinity to
prove a regularity result as a step stone for the proof of Theorem
\ref{mainresult3}..

\begin{theo}\label{mainresult1} Suppose that
$(X^{n+1}, \ g)$ is a complete Riemannain manifold with an essential
set $\mathbf{E}$ and that it satisfies the curvature condition
(\ref{deacyofcurv}) with $a > 0$ and the curvature condition
(\ref{deacyofderivativeofcurv}) with $k=1$ and $b > 2 - \frac
1{n+1}$. Then there is differentiable structure on $\bar X$, which
is smooth in the interior of $X$, and $W^{3,p}$ up to the boundary,
where $p$ is some positive constant bigger than $n+1$. And in this
differentiable structure,  we have
\begin{enumerate}
    \item   $\bar g$ is smooth in the
    interior of $X$ and is $W^{2,p}$ up to the
    boundary for some $p>n+1$. In particular, it is $C^{1, \alpha}$
    smooth up to the boundary, for some $\alpha \in (0,1)$;
    \item $\bar g$ is smooth in the interior of $X$ and it is $W^{2,p}$ up to the
    boundary for any $p>n+1$ if in fact $b\geq 2$. In particular, it is
    $C^{1, \alpha}$ smooth up to the boundary, for any $\alpha \in (0,1)$.
\end{enumerate}
\end{theo}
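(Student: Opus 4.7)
The plan is to apply $L^p$ elliptic regularity to the weak equation of Proposition \ref{weakcurvature}, using the curvature bounds of Proposition \ref{estimatesofcompactifiedcurvature} as the driving data and the doubled harmonic chart constructed in Section 3 so that the boundary $\{\tau=0\}$ behaves as an interior point. In a harmonic chart $(y^0,\ldots,y^n)$ on a doubled neighborhood $\mathcal{U}$, one has $\bar{\Delta}y^\gamma=0$, so $\bar{\Delta}$ acts on a function simply as $\bar g^{\gamma\delta}\partial_\gamma\partial_\delta$, and Proposition \ref{weakcurvature} takes the form
\[
\tfrac12\bar g^{\gamma\delta}\partial_\gamma\partial_\delta \hat g_{\alpha\beta}\;=\;-\hat R_{\alpha\beta}-Q_{\alpha\beta}(\partial\hat g,\hat g),
\]
weakly across $\{\tau=0\}$. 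The coefficients $\bar g^{\gamma\delta}$ are continuous (in fact Lipschitz, by Lemma \ref{higherordersmooth} and Corollary \ref{ptauw1p}), and the operator is uniformly elliptic, so this is a linear elliptic equation with continuous leading coefficients.

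Next I would bound the right-hand side in $L^p_{\mathrm{loc}}$. The $C^{1,\mu}$ control already available makes $Q_{\alpha\beta}(\partial\hat g,\hat g)$ bounded, so only the Ricci term needs analysis. Proposition \ref{estimatesofcompactifiedcurvature} together with the asymptotic $\tau\sim e^{-\rho}$ gives $\|\hat R_{\alpha\beta}\|\le\Lambda\tau^{a-2}$ when $a<2$, logarithmic growth when $a=2$, and a uniform bound when $a>2$. Since $a\ge b>2-\tfrac{1}{n+1}$ by the observation recalled in the introduction, the condition $(a-2)p>-1$ holds for some $p\in\bigl(n+1,\tfrac{1}{2-a}\bigr)$ when $a<2$, and for every $p>1$ when $a\ge 2$. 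Hence $\hat R\in L^p_{\mathrm{loc}}(\mathcal{U})$ on each side of $\{\tau=0\}$, and the even extension across the boundary preserves this integrability without producing a singular contribution.

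With this input, the standard interior $L^p$ elliptic estimate of Calder\'on--Zygmund type yields $\hat g_{\alpha\beta}\in W^{2,p}_{\mathrm{loc}}(\mathcal{U})$ for $p$ in the indicated range, and Sobolev embedding with $p>n+1$ then upgrades this to $\hat g\in C^{1,\alpha}$ up to $\{\tau=0\}$ with $\alpha=1-\tfrac{n+1}{p}$. The differentiable structure on $\bar X$ is the atlas of such harmonic charts patched with the smooth interior of $X$; each transition between two harmonic charts is harmonic for a $W^{2,p}$ metric and therefore belongs to $W^{3,p}$. Part (2) follows verbatim with $p$ arbitrary once $a\ge b\ge 2$ makes $\hat R$ uniformly bounded. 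The delicate point I expect to be the main obstacle is not the elliptic estimate itself but verifying that the equation of Proposition \ref{weakcurvature} has no concentrated jump term along $\{\tau=0\}$; this hinges on Corollary \ref{ptauw1p} and ultimately on the mixed-derivative bound $|\partial_\tau\partial_{\theta^k}\bar g_{ij}|\le C\tau^{b-2}$ of Proposition \ref{basicestimate}, which is the reason the threshold $b>2-\tfrac{1}{n+1}$ appears in the hypothesis.
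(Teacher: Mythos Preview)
Your proposal is correct and follows essentially the same route as the paper: invoke Proposition~\ref{weakcurvature} on the doubled manifold, feed the curvature bound of Proposition~\ref{estimatesofcompactifiedcurvature} into interior $L^p$ elliptic estimates to get $\hat g\in W^{2,p}$, embed into $C^{1,\alpha}$, and then note the harmonic transition maps are $W^{3,p}$. The only imprecision is your claim that ``the $C^{1,\mu}$ control already available'' makes $Q$ bounded---at this stage one only has $\hat g_{\alpha\beta}\in W^{1,p}$ (from the Lipschitz $\bar g$ in the $(\tau,\theta)$ chart and the $W^{2,p}$ coordinate change), which is precisely what the paper records and is enough for the standard $L^p$ theory.
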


\begin{proof}[Proof of Theorem \ref{mainresult1}]
By the definition and a direct computation, we see that $\hat
g_{\alpha\beta}\in{W^{1,p}(\mathcal{U},y^\gamma)}$, for some $p\in
(n+1, \frac 1{2-b})$, Then, by Proposition \ref{weakcurvature} and
the standard $L^p$ theory in PDE, we obtain that $\hat
g_{\alpha\beta}\in $ $W^{2,p}(\mathcal{U},y^\gamma)$ and there is a
constant $C$ which depends on $p$ such that
$$
 \|\hat g_{\alpha\beta}\|_{L^p (\mathcal{U})} +  \sum_\gamma
\|\frac{\p }{\p y^\gamma} \hat g_{\alpha\beta}\|_{L^p (\mathcal{U})}
+ \sum_{\gamma, \mu}\|\frac{\p^2 \hat g_{\alpha\beta}}{\p y^\gamma
\p y^\mu} \|_{L^p (\mathcal{U})} \leq C.
$$
Particularly $\hat{g}_{\alpha\beta}\in
C^{1,\mu}(\mathcal{U},y^\gamma)$ due to the standard Sobolev
embedding theorem for some $\mu \in (0, 1)$.

Now we have constructed the harmonic coordinates
$(\mathcal{U},y^\gamma)$ on $N$, and $\hat g_{\alpha\beta}$ has
better regularity than $\bar g_{\alpha\beta}$. By taking
$\mathcal{U}\bigcap \bar M$, which is still denoted by
$\mathcal{U}$, we get a coordinates covering on $M$. Let
$(\mathcal{U},y^\gamma)$ and $(\mathcal{V},z^\sigma)$ be two
distinct harmonic coordinates on $(N, \ \bar g)$. And suppose
$\mathcal{U}\bigcap\mathcal{V}$ is nonempty. Then, by the standard
arguments in PDE, we see that there is a constant $C$ with
$$
\| z^\sigma \|_{W^{3,p}(\mathcal{U}\bigcap\mathcal{V},y^\gamma)}\leq
C
$$
Putting all these harmonic coordinates together we then obtain a
differentiable structure on $\bar X^{n+1}$. Thus we finish to prove
Theorem \ref{mainresult1}.
\end{proof}

\begin{proof}[Proof of Theorem \ref{mainresult3}]
Recall from Proposition \ref{estimatesofcompactifiedcurvature}
$$
\|\bar \nabla Rm(\bar g)\|_{\bar g} \leq \Lambda \tau^{-(3-b)}.
$$
Hence by Lemma \ref{elementarylemma} and $b >2$, we know
$\bar{R}m\in C^{0,\mu_{1}}(\mathcal{U})$.  On the other hand, $\p
\hat{g}_{\alpha\beta}\in C^{0,\mu_{2}}(\mathcal{U})$, due to Theorem
\ref{mainresult1}. Therefore we may recall the equation
$$
\frac{1}{2}\bar \Delta \hat g_{\alpha\beta} + Q_{\alpha\beta}(\p
\hat g,\hat g)=-\hat R_{\alpha\beta},
$$
where $Q_{\alpha\beta}(\p \hat g,\hat g)$ and $\hat R_{\alpha\beta}$
are all H\"{o}lder continuous. Thus Theorem \ref{mainresult3}
follows from the standard Schauder theory in elliptic PDE.
\end{proof}

\section {Improvement by the Ricci flow }

In this section we show that the Ricci flow can be used to help to
get the regularity without assuming the curvature condition
(\ref{deacyofderivativeofcurv}). Let $(X^{n+1}, \ g)$ be a complete
Riemannian manifold satisfying the curvature condition
(\ref{deacyofcurv}) with $a > 0$. We consider the Ricci flow
equations
\begin{equation}\label{unnormalizedequation}
\left\{
\begin{array}{ll}
    \frac{\partial}{\partial t}g_{\alpha\beta} = -2R_{\alpha\beta}  \\
    g_{\alpha\beta}(x,0)=g_{\alpha\beta}(x) \quad \text{on}\quad X. \\
\end{array}
\right.
\end{equation}
We will also consider the normalized Ricci flow as follows:
\begin{equation}\label{normalizedequation}
\left\{
\begin{array}{ll}
    \frac{\partial}{\partial t}g_{\alpha\beta} = -2ng_{\alpha\beta}-2R_{\alpha\beta},  \\
    g_{\alpha\beta}(x,0)=g_{\alpha\beta}(x) \quad \text{on}\quad X,\\
\end{array}
\right.
\end{equation}
We recall that, if $g^N$ solves the normalized Ricci flow
(\ref{normalizedequation}) and let
\begin{equation}\label{scaling}
\left\{
\begin{array}{ll}
    \tau (t)=\frac{1}{2n}e^{2nt}-\frac{1}{2n},  \\
    g^U_{\alpha\beta}(x,\tau)=(1+2n\tau)g^N_{\alpha\beta}(x,t),\\
\end{array}
\right.
\end{equation}
then $g^U$ solves the Ricci flow (\ref{unnormalizedequation}). Hence
 if we know one we know the other. Since our initial metric
satisfies the curvature condition (\ref{deacyofcurv}), by the works
of Shi (see Theorem 1.1 in \cite{Shi}), there exist constants
$T,C_1,C_2>0$, which only depend on $n$ and the constant in
(\ref{deacyofcurv}) such that the evolution equation
$(\ref{unnormalizedequation})$ has a smooth solution $g(\cdot, t)$
for a short time $t\in [0, T]$, which satisfies the following
estimates:
$$
\sup_{x\in X}\|R_{\alpha\beta\gamma\delta}(x,t)\|\leq\ C_{1},~~0\leq
t\leq T
$$
and
$$
\sup_{x\in X}\|\nabla
R_{\alpha\beta\gamma\delta}(x,t)\|\leq\frac{C_{2}}{\sqrt{t}
},~~0\leq t\leq T.
$$
Let
$$
E_{\alpha\beta\gamma\delta}(g^U):=R_{\alpha\beta\gamma\delta}(g^U)
+\frac{1}{1+2(n-1)t}(g^U_{\alpha\gamma}g^U_{\beta\delta}-g^U_{\alpha\delta}g^U_{\beta\gamma}).
$$
Then
$$
E_{\alpha\beta\gamma\delta}(g^N) = R_{\alpha\beta\gamma\delta}(g^N)
+
(g^N_{\alpha\gamma}g^N_{\beta\delta}-g^N_{\alpha\delta}g^N_{\beta\gamma}).
$$
Notice that $\nabla E = \nabla Rm$. Direct computations from the
evolution equations of Riemann curvature tensor give us the
following:

\begin{lemm}\label{evolutionofesquare} Suppose that $g$ solve the
Ricci flow (\ref{unnormalizedequation}) and that $E = E(g)$. Then
the evolution equations of $\|E\|^{2}$ and $\|\nabla E\|^{2}$ are
given by
$$
\frac{\partial}{\partial t}\|E\|^{2}=\Delta \|E\|^{2}-2\|\nabla
E\|^{2}+E\ast E\ast E+E\ast E
$$
and
$$
\frac{\partial}{\partial t}\|\nabla E\|^{2}
 =\Delta \|\nabla E\|^{2}-2\|\nabla^{2}E\|^{2}+\nabla E\ast\nabla
E+E\ast\nabla E\ast\nabla E,
$$
where $E*E$ stands for terms that are contractions of $E$ and $E$.
\end{lemm}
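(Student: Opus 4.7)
The plan is to derive both identities by a direct computation, combining Hamilton's standard evolution equation for the Riemann curvature tensor under the unnormalized Ricci flow with the Bochner-type identity $\Delta\|T\|^{2}=2\langle T,\Delta T\rangle+2\|\nabla T\|^{2}$.

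First I would recall three well-known formulas from the Ricci flow: Hamilton's evolution equation
$$
\tfrac{\partial}{\partial t}R_{\alpha\beta\gamma\delta}=\Delta R_{\alpha\beta\gamma\delta}+(Rm*Rm),
$$
the variation of the metric $\tfrac{\partial}{\partial t}g_{\alpha\beta}=-2R_{\alpha\beta}$ (with induced variation of $g^{-1}$), and the variation of the Christoffel symbols $\tfrac{\partial}{\partial t}\Gamma^{k}_{ij}=-g^{kl}(\nabla_{i}R_{jl}+\nabla_{j}R_{il}-\nabla_{l}R_{ij})$. Because $\nabla g=0$, one has $\nabla E=\nabla Rm$, as the lemma statement already observes, so all covariant derivatives of $E$ are literally covariant derivatives of $Rm$.

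The next step is to compute $\partial_{t}E$. Writing $E=Rm+f(t)\phi(g)$ with $f(t)=\tfrac{1}{1+2(n-1)t}$ and $\phi(g)_{\alpha\beta\gamma\delta}=g_{\alpha\gamma}g_{\beta\delta}-g_{\alpha\delta}g_{\beta\gamma}$, differentiate the correction term using $f'$ and $\partial_{t}g=-2Ric$, and then re-express $Ric$ through $E$ via the trace identity $R_{ij}=g^{kl}E_{kijl}-(n-1)f(t)g_{ij}$. Substituting $Rm=E-f(t)\phi(g)$ into the quadratic tail of Hamilton's formula turns each $Rm*Rm$ contraction into a sum of $E*E$, $f(t)\,E*\phi(g)$, and $f(t)^{2}\phi(g)*\phi(g)$ pieces; the time-dependent coefficient $f(t)$ is designed precisely so that the metric-only pieces (i.e.\ those carrying no factor of $E$) cancel between the quadratic tail, $f'\phi$, and $f\,\partial_{t}\phi$, while the mixed pieces $f(t)\,E*\phi(g)$ collapse (after metric contractions) into contributions that can be folded into the $E*E$ bucket. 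This gives the clean identity
$$
\tfrac{\partial}{\partial t}E=\Delta E+E*E.
$$

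From here the first identity of the lemma drops out quickly. Writing $\|E\|^{2}$ as a fourfold contraction with $g^{-1}$ and differentiating, the time derivative of the metric contractions contributes a term of the form $Ric*E*E$, which upon re-expressing $Ric$ in terms of $E$ splits into $E*E$ and $E*E*E$ pieces; the $2\langle E,\partial_{t}E\rangle$ part contributes $2\langle E,\Delta E\rangle+E*E*E$, and the Bochner identity $2\langle E,\Delta E\rangle=\Delta\|E\|^{2}-2\|\nabla E\|^{2}$ finishes the computation.

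For the second identity I would commute $\partial_{t}$ past the covariant derivative. The commutator $[\partial_{t},\nabla]E$ equals a contraction of $\partial_{t}\Gamma$ with $E$, and since $\partial_{t}\Gamma$ is a linear combination of components of $\nabla Ric=\nabla E$, this gives an $E*\nabla E$ contribution. Combined with $\nabla(\partial_{t}E)=\nabla\Delta E+\nabla(E*E)$ and the standard commutator $\nabla\Delta E=\Delta\nabla E+Rm*\nabla E=\Delta\nabla E+E*\nabla E+f(t)\phi(g)*\nabla E$, one sees that the $\phi(g)*\nabla E$ piece, when paired against $\nabla E$ inside $2\langle\nabla E,\partial_{t}\nabla E\rangle$, produces a pure $\nabla E*\nabla E$ term; the remaining pieces contribute $E*\nabla E*\nabla E$. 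A second application of Bochner, $2\langle\nabla E,\Delta\nabla E\rangle=\Delta\|\nabla E\|^{2}-2\|\nabla^{2}E\|^{2}$, plus the variation of $g^{-1}$ (which again only generates $\nabla E*\nabla E$ and $E*\nabla E*\nabla E$ terms), yields the stated equation for $\partial_{t}\|\nabla E\|^{2}$.

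The main obstacle is the algebraic bookkeeping in the identification $\partial_{t}E=\Delta E+E*E$: the entire reason for introducing the specific time-dependent shift $f(t)=1/(1+2(n-1)t)$ is to force the cancellation of all constant-curvature (metric-only) contributions produced by Hamilton's quadratic tail together with $f'\phi+f\partial_{t}\phi$. Once that cancellation is verified, the remainder is a routine Bochner-formula manipulation in the style of Hamilton's original work on the Ricci flow.
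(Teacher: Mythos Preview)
Your approach is exactly what the paper has in mind: the authors give no proof beyond the sentence ``Direct computations from the evolution equations of Riemann curvature tensor give us the following,'' and your outline fills that in with precisely the standard Hamilton/Bochner manipulation.

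One imprecision is worth correcting. The intermediate identity you claim,
\[
\partial_t E=\Delta E+E*E,
\]
is not quite right as stated. When you substitute $Rm=E-f(t)\phi(g)$ into Hamilton's quadratic tail, the cross terms $f(t)\,E*\phi(g)$ are contractions of $E$ against the metric, hence are \emph{linear} in $E$ (schematically $f(t)\cdot E$ and $f(t)\cdot(\mathrm{tr}\,E)\owedge g$); the same is true of the trace-of-$E$ contribution coming from $f\,\partial_t\phi$ via $Ric=\mathrm{tr}\,E-nf\,g$. These linear pieces do not in general cancel, so the honest schematic form is
\[
\partial_t E=\Delta E+f(t)\,E+E*E,
\]
with a linear zero-order term whose coefficient depends on $f(t)$. (What the choice of $f$ actually buys you is only the vanishing of the \emph{zeroth}-order, metric-only pieces, i.e.\ the statement that $E\equiv 0$ is preserved.)

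This does not affect the lemma: once you pair with $E$ in $2\langle E,\partial_t E\rangle$, the linear term produces exactly an $E*E$ contribution to $\partial_t\|E\|^2$, and the genuinely quadratic term produces $E*E*E$; the metric-variation term $Ric*E*E$ splits the same way. Likewise for $\|\nabla E\|^2$: the linear piece contributes $\nabla E*\nabla E$ after pairing, and the rest gives $E*\nabla E*\nabla E$, just as you wrote in your second computation. So your argument goes through once you relabel the intermediate step; only the sentence ``folded into the $E*E$ bucket'' needs to be replaced by ``gives a term linear in $E$ (hence $E*E$ after pairing with $E$).''
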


Now we will use the maximum principle to estimate $\|E\|$ and
$\|\nabla E\|$.
\begin{lemm}\label{laplacianofinitialrho}
Let $g(t)$ be the smooth solution of (\ref{unnormalizedequation})
and $\rho_{t}$ the distance function corresponding to $g(t)$, $0\leq
t\leq T$, then $$|\Delta_{g(t)}\rho_{0}|\leq C, ~~~~0\leq t\leq T,$$
here $C$ depends only on $n$ and the constant in
(\ref{deacyofcurv}).
\end{lemm}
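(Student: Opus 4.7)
The plan is to first establish the desired bound at $t=0$ directly from the Riccati-type estimates of Section 2, and then propagate it to positive time using the smallness, furnished by Shi's estimates, of both $g(t)-g(0)$ and the difference of their Christoffel symbols.

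\textbf{Initial time.} Outside the essential set, the metric $g(0)$ takes the warped form $d\rho_0^2+g_{ij}(\rho_0,\theta)d\theta^i d\theta^j$, so $\Delta_{g(0)}\rho_0$ equals the mean curvature $H=\tr(S_i^j)$ of the level hypersurface $\Sigma_{\rho_0}$. From the estimates (\ref{estimatesp1})--(\ref{estimatesp3}) one has $S_i^j=\delta_i^j+p_i^j e^{-2\rho_0}$ with $|p_i^j|$ bounded by $C$, $C\rho_0$, or $Ce^{(2-a)\rho_0}$ depending on whether $a>2$, $a=2$, or $0<a<2$; in every case $e^{-2\rho_0}|p_i^j|$ is uniformly bounded, so $|\Delta_{g(0)}\rho_0|\le C$ on $X\setminus\mathbf{E}$, while inside $\mathbf{E}$ the bound is trivial.

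\textbf{Propagation along the flow.} By Shi's theorem there exist constants $C_1,C_2,T>0$ depending only on $n$ and the constant in (\ref{deacyofcurv}) such that $\|Rm(g(t))\|_{g(t)}\le C_1$ and $\|\nabla Rm(g(t))\|_{g(t)}\le C_2/\sqrt t$ on $[0,T]$. Integrating $\partial_t g=-2\operatorname{Ric}$ gives $|g(t)-g(0)|_{g(0)}\le CT$, so for $T$ small the metrics are uniformly equivalent. Using the standard formula
\[
\partial_t\Gamma^\gamma_{\alpha\beta}(g(t))=-g^{\gamma\delta}\bigl(\nabla_\alpha R_{\beta\delta}+\nabla_\beta R_{\alpha\delta}-\nabla_\delta R_{\alpha\beta}\bigr),
\]
the integrability of $1/\sqrt t$ together with Shi's bound yields $|\Gamma(g(t))-\Gamma(g(0))|_{g(0)}\le C\sqrt t$. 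Expanding
\[
\Delta_{g(t)}\rho_0=g(t)^{\alpha\beta}\bigl(\partial_\alpha\partial_\beta\rho_0-\Gamma^\gamma_{\alpha\beta}(g(t))\partial_\gamma\rho_0\bigr)
\]
and subtracting the corresponding expression for $g(0)$, the discrepancy decomposes into a term $(g(t)^{\alpha\beta}-g(0)^{\alpha\beta})$ contracted with the $g(0)$-Hessian of $\rho_0$ (bounded by the first step) and a Christoffel-difference term contracted with $\partial\rho_0$ (bounded since $|\nabla^{g(0)}\rho_0|_{g(0)}=1$). Both contributions are $O(\sqrt T)$, which combined with Step 1 gives $|\Delta_{g(t)}\rho_0|\le C$ on $[0,T]$.

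\textbf{Main obstacle.} The only delicate point is the $1/\sqrt t$ singularity in Shi's first-derivative estimate: the singularity is integrable in time, so the $\sqrt t$ decay of the Christoffel difference goes through cleanly, but one has to take care to convert Shi's $g(t)$-norm bound on $\nabla Rm$ into a $g(0)$-norm bound, which is justified by the uniform equivalence of the two metrics already established. The rest of the argument is a direct consequence of the Section~2 Riccati analysis and standard evolution identities.
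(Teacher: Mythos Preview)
Your argument is correct and uses the same ingredients as the paper---the Riccati control of $\nabla^2_{g(0)}\rho_0$ together with Shi's estimates to bound the metric and Christoffel drift along the flow. The paper organizes things slightly differently: it differentiates $\Delta_{g(t)}\rho_0$ in $t$ and invokes the contracted second Bianchi identity $g^{\alpha\beta}(R_{\beta\delta,\alpha}+R_{\alpha\delta,\beta}-R_{\alpha\beta,\delta})=0$ to kill the traced $\nabla\mathrm{Ric}$ contribution outright, leaving $\partial_t(\Delta_{g(t)}\rho_0)=2R^{\alpha\beta}(\nabla^2_{g(t)}\rho_0)_{\alpha\beta}$, and then bounds the time-$t$ Hessian via the same $\sqrt t$ Christoffel estimate you use; your direct decomposition of the difference arrives at the same place without Bianchi. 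One small point of exposition: your Step~1 as written only bounds the trace $\Delta_{g(0)}\rho_0$, but your term~I needs the full Hessian $|\nabla^2_{g(0)}\rho_0|_{g(0)}$---this is of course equally controlled by (\ref{estimatesp1})--(\ref{estimatesp3}), since those bound each $S_i^j$ and not just the mean curvature, so just say so explicitly.
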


\begin{proof} First we recall a fact that
\begin{equation}\label{evolutionofxtofel}
\frac{\partial \Gamma^\gamma _{\alpha\beta}}{\partial t}=
-g^{\gamma\delta}(R_{\alpha\delta,\beta}+
R_{\beta\delta,\alpha}-R_{\alpha\beta,\delta}).
\end{equation}
Then we calculate
\begin{equation}
\begin{split}
\frac{\partial}{\partial t}(\Delta_{g(t)}\rho_{0}) & =
\frac{\partial}{\partial t}(g^{\alpha\beta}(\nabla^2\rho_0)_{\alpha\beta})\\
&
=2g^{\alpha\gamma}g^{\beta\delta}R_{\gamma\delta}(\nabla^2\rho_0)_{\alpha\beta}
+g^{\alpha\beta}g^{\gamma\delta}(R_{\beta\delta,\alpha}+R_{\alpha\delta,\beta}-
R_{\alpha\beta,\delta})\partial_{\gamma}\rho_{0}.
\end{split}\nonumber
\end{equation}
Due to Bianchi identity
$$
g^{\alpha\beta}(R_{\beta\delta,\alpha}+R_{\alpha\delta,\beta}-R_{\alpha\beta,\delta})
= 0
$$
we arrive at
\begin{equation}
\begin{split}
\frac{\partial}{\partial t} & (\Delta_{g(t)}\rho_{0})
=2g^{\alpha\gamma}g^{\beta\delta}R_{\gamma\delta}(\nabla^2_0\rho_0)_{\alpha\beta}\\
= &
2g^{\alpha\gamma}g^{\beta\delta}R_{\gamma\delta}(\nabla^2_0\rho_0)_{\alpha\beta}+
2g^{\alpha\gamma}g^{\beta\delta}
g^{\gamma\sigma}R_{\gamma\delta}(R_{\beta\sigma,\alpha}+R_{\alpha\sigma,\beta}
-R_{\alpha\beta,\sigma})(s)\partial_{\gamma}\rho_{0}\cdot
t,\\
\end{split}\nonumber
\end{equation}
where $0<s<t\leq T$, $\nabla^2_0\rho_0$ is Hessian of $\rho_0$ with
respect to metric $g_0$. This lemma then follows from Shi's work
\cite{Shi}.
\end{proof}

Now let $\mu$, $\nu$ and $\eta$ be constants which are to be
determined later. By we calculate from Lemma
\ref{laplacianofinitialrho} that
$$
\aligned \frac{\partial}{\partial t}(e^{\mu\rho_{0}}\|E\|^{2}) &
\leq \Delta(e^{\mu\rho_{0}}\|E\|^{2})
\\ +<2\mu\nabla\rho_{0},\nabla(e^{\mu\rho_{0}}\|E\|^{2})> & + C(e^{\mu\rho_{0}}\|E\|^{2})-2e^{\mu\rho_{0}}\|\nabla
 E\|^{2},\endaligned
$$
and
\begin{equation}
\begin{split}
\frac{\partial}{\partial t}(e^{\nu\rho_{0}}\|\nabla E\|^{2})&
\leq\Delta(e^{\nu\rho_{0}}\|\nabla
E\|^{2})+<2\nu\nabla\rho_{0},\nabla(e^{\nu\rho_{0}}\|\nabla
E\|^{2})>\\
&+C(e^{\nu\rho_{0}}\|\nabla E\|^{2}) -2e^{\nu\rho_{0}}\|\nabla^{2}
 E\|^{2},
\end{split}\nonumber
\end{equation}
where $C$ is large enough and independent of $t$. Therefore
\begin{equation}
\begin{split}
\frac{\partial}{\partial t}(e^{\mu\rho_{0}}\|E\|^{2}+\eta
te^{\nu\rho_{0}}\|\nabla E\|^{2}) &
\leq\Delta(e^{\mu\rho_{0}}\|E\|^{2}+\eta te^{\nu\rho_{0}}\|\nabla
E\|^{2})\\
 +C(e^{\mu\rho_{0}}\|E\|^{2}+\eta
te^{\nu\rho_{0}} & \|\nabla E\|^{2})+
(\eta e^{\nu\rho_{0}} -2e^{\mu\rho_{0}})\|\nabla E\|^{2}\\
& \leq\Delta(e^{\mu\rho_{0}}\|E\|^{2}+\eta te^{\nu\rho_{0}}\|\nabla
E\|^{2})\\
&+C(e^{\mu\rho_{0}}\|E\|^{2}+\eta te^{\nu\rho_{0}}\|\nabla E\|^{2}),
\end{split}\
\end{equation}
if we choose constants $\mu=\nu$ and $\eta\leq 2$.\\

Thus we have:

\begin{prop}\label{curvaturedecayestimate}
Suppose $(X^{n+1}, \ g)$ is a complete Riemannian manifold
satisfying the curvature condition (\ref{deacyofcurv}) with $a > 0$.
Let $(M,g(t))$, $t\in [0,T],$ be a complete solution of the
normalized Ricci flow (\ref{normalizedequation}). Then there exists
constants $T_{0}$ and $C$ such that
$$
\|E(g(t))\|\leq Ce^{-a\rho_{0}}
$$
and
$$
\|\nabla E(g(t))\|\leq \frac{C}{\sqrt{t}}e^{-a\rho_{0}},$$ where
$\nabla$ is with respect to metric $g(t)$, and $C$ is independent of
$t$, $0<t\leq$ $T_{0}\leq T.$
\end{prop}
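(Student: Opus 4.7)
My plan is to continue the maximum-principle argument that is set up in the paragraphs immediately preceding the statement. The combined quantity
\[
F(x,t) := e^{\mu\rho_{0}}\|E\|^{2} + \eta\, t\, e^{\nu\rho_{0}}\|\nabla E\|^{2}
\]
already satisfies (for $\mu=\nu$ and $\eta\leq 2$) a heat-type differential inequality
\[
\frac{\partial}{\partial t}F \leq \Delta F + \langle 2\mu\nabla\rho_{0},\nabla F\rangle + C\,F
\]
on $X\times[0,T]$. The strategy is to make a concrete choice of parameters so that $F$ is bounded at $t=0$, then invoke a maximum principle on the complete noncompact manifold to propagate this bound for a short time.

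The key steps are as follows. First, choose $\mu = \nu = 2a$ and any $\eta\in(0,2]$. By the curvature hypothesis $\|E(g_{0})\|\leq Ce^{-a\rho_{0}}$ at $t=0$, the first term of $F$ is uniformly bounded by a constant depending only on $C$; the second term vanishes because of the factor of $t$. So $F(\cdot,0)\leq C_{0}$ on $X$. Second, observe from Shi's estimates quoted in the paper that $\|E(\cdot,t)\|$ and $\|\nabla E(\cdot,t)\|$ are pointwise bounded (the latter by $C_{2}/\sqrt{t}$) for $t\in(0,T]$, while $\rho_{0}$ grows only linearly along $g(t)$-geodesics by Lemma \ref{laplacianofinitialrho} and Shi-type distance distortion; this is needed only to justify that $F$ is well-behaved enough (e.g.\ $F\,e^{-\lambda\rho_{0}^{2}}$ is bounded for some $\lambda$ on short time intervals) so that the maximum principle for semilinear parabolic inequalities on complete manifolds with bounded curvature (in the style of Karp--Li or Ni--Tam) applies. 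Third, applying that maximum principle gives
\[
\sup_{X}F(\cdot,t) \leq C_{0}\, e^{Ct}, \qquad 0\leq t\leq T_{0},
\]
for some $T_{0}\in(0,T]$, whence both desired estimates follow after unraveling the definition of $F$ and taking square roots. Finally, because the bounds just established are for the unnormalized flow $g^{U}$, we transfer to $g^{N}$ via the rescaling (\ref{scaling}); the exponential decay factor $e^{-a\rho_{0}}$ is preserved since $\rho_{0}$ is defined using the common initial metric, and the time factor $\tau(t)$ is comparable to $t$ for small $t$, so the $1/\sqrt{t}$ bound is preserved up to a constant.

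The main obstacle is the noncompactness: one cannot directly apply the classical maximum principle to a function defined on all of $X\times[0,T_{0}]$. My approach to this is a standard cutoff/barrier construction. Because Shi's global bounds guarantee that $\|E\|$ and $\sqrt{t}\|\nabla E\|$ are bounded, and $e^{\mu\rho_{0}}$ grows only exponentially in $\rho_{0}$, one can compare $F$ with the supersolution $C_{0}e^{Ct} + \varepsilon(e^{\lambda\rho_{0}^{2}+Kt})$ for arbitrarily small $\varepsilon>0$, where the auxiliary exponential is chosen so that it dominates any possible growth of $F$ as $\rho_{0}\to\infty$ on a short time interval $[0,T_{0}]$ with $T_{0}$ depending on $\lambda$ and the curvature bounds. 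Running the classical parabolic comparison on the compact region $\{\rho_{0}\leq R\}\times[0,T_{0}]$ and letting $R\to\infty$ then $\varepsilon\to 0$ gives $F\leq C_{0}e^{CT_{0}}$ globally. This is where the restriction to a short time interval enters; everything else is routine.
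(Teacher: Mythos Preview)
Your proposal is essentially the same as the paper's proof: both set $\mu=\nu=2a$ (with $\eta\leq 2$), use Shi's global bounds to control the growth of $F$ at infinity, and then invoke a Karp--Li type maximum principle on the noncompact manifold to propagate the initial bound $F(\cdot,0)\leq C_0$; the paper just cites \cite{KL} for the integral-growth criterion, while you sketch the barrier argument that underlies it.

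One small technical point about your explicit barrier: the function $\varepsilon\, e^{\lambda\rho_0^2+Kt}$ with a \emph{constant} $K$ is not a supersolution of $(\partial_t-\Delta-\langle V,\nabla\rangle-C)$ on all of $X$, because $\Delta_{g(t)} e^{\lambda\rho_0^2}$ contains a term of order $\lambda^2\rho_0^2|\nabla\rho_0|^2\,e^{\lambda\rho_0^2}$, which cannot be dominated by $K\,e^{\lambda\rho_0^2}$ as $\rho_0\to\infty$. The standard remedy is to use a time-singular barrier such as $e^{\lambda\rho_0^2/(T_0-t)}$, or equivalently to quote the Karp--Li integral condition directly as the paper does; since you already cite those references, this does not affect the substance of your argument.
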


\begin{proof}
It suffices to prove the same conclusion for unnormalize Ricci flow
(\ref{unnormalizedequation}). From the analysis above, choose $T$
smaller if necessary, we know $(X, \ g(t))$, $t$ $\in [0,T],$ be a
complete solution of the Ricci flow (\ref{unnormalizedequation})
with uniformly bounded curvature. In fact, due to
(\ref{deacyofcurv}) on $(X, \ g(0))$,
$$
e^{2a\rho_{0}}\|E\|^{2}(x,0)\leq C.
$$
So $u(x,t)=e^{2a\rho_{0}}\|E\|^{2}+ te^{2a\rho_{0}}\|\nabla
E\|^{2}-C$ is a weak subsolution of the heat equation
$(\frac{\partial}{\partial t}-\bigtriangleup)u(x,t)=0$ on
$X\times[0,T]$ with $u(x,0)=e^{2a\rho_{0}}\|E\|^{2}-C\leq0$. On the
other hand, we can choose sufficiently large $\omega$ such that
$$
\int_{0}^{T_{1}}\int_{M}exp(-\omega
d_{g_0}^{2}(x,o))u_{+}^{2}(x,s)dV_{g(t)}dt<\infty,
$$
where $d_{g_0}(x,o)$ is a distance function to a fixed point $o\in
M$ with respect to $g_0.$ Then according to the results of Krap and
Li \cite{KL}, $u(x,t)\leq 0$ for all $(x,t)\in X\times [0,T]$. Then
$$
e^{2a\rho_{0}}\|E\|^{2}-C\leq0
$$
and
$$
\eta te^{\mu\rho_{0}}\|\nabla E\|^{2}-C\leq0.
$$
Hence the proof is complete.
\end{proof}

So far the normalized Ricci flow $g(x,t)$ preserves the asymptotical
curvature behavior of the initial metric. Next we want to show that
there is a compact set $\mathbf{E}\subset X$ such that it is
essential set for all $g(x, t)$, $t\in [0, T]$, if
$\mathbf{E}\subset X$ is an essential set for the initial metric, at
least for a sufficiently small $T$. To do that, according to Theorem
$4$ in \cite{BM}, it suffices to show that $\mathbf{E}$ is strictly
convex. Since, in the light of Proposition
\ref{curvaturedecayestimate}, we may choose $\mathbf{E}$ large
enough so that sectional curvature of $g(x, t)$ is negative out of
$\mathbf{E}$ for $t\in[0, T]$.

\begin{prop}\label{derivativeestimate1}  Suppose $(X^{n+1}, \ g)$
is a complete Riemannian manifold with an essential set $\mathbf{E}$
and that it satisfies the curvature condition (\ref{deacyofcurv})
with $a > 0$. Let $\rho$ be the distance to $\partial\mathbf{E}$ in
$(X^{n+1}, \ g)$. And suppose that $g(\cdot, t)$ is the solution to
the normalized Ricci flow equations (\ref{normalizedequation}). Then
there is $T>0$ and $\Lambda_0$ such that
$$
\mathbf{E}_1 = \{x\in X: \rho \leq 2\Lambda_0\}
$$
is strictly convex with respect to $g(\cdot, t)$ for all $t\in [0,
T]$. Therefore $\mathbf{E}_1$ is an essential set in $X$ for all
$g(\cdot, t)$ with $t\in [0, T]$.
\end{prop}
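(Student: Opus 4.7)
\emph{Proof proposal.} The plan is to combine the pointwise estimates on the second fundamental form of level sets from Section 2 with Shi's derivative estimates for the Ricci flow. Along $g(0)$, Lemma \ref{estforriccatiequa} together with (\ref{2ndfundamentalest1})--(\ref{2ndfundamentalest3}) show that in an orthonormal frame on $\Sigma_\rho$ the second fundamental form $h^{(0)}_{ij}$ satisfies $h^{(0)}_{ij} = \delta_{ij} + O(e^{-\min(a,2)\rho})$ (with the appropriate logarithmic or polynomial correction when $a\leq 2$) as $\rho\to\infty$. Hence one can pick $\Lambda_0$ so large that on the level set $\Sigma_{2\Lambda_0} = \partial\mathbf{E}_1$ the eigenvalues of $h^{(0)}$ all lie in $[1/2, 2]$, and simultaneously, by the curvature hypothesis (\ref{deacyofcurv}), the sectional curvature of $g(0)$ is bounded above by $-1/2$ on $\{\rho \geq 2\Lambda_0\}$. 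This choice makes $\mathbf{E}_1$ strictly convex in $(X,g(0))$ by a definite margin, and gives strictly negative sectional curvature outside.

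Next I would establish $C^1$-closeness of $g(t)$ to $g(0)$ on the compact hypersurface $\partial\mathbf{E}_1$. Shi's estimates furnish $\|Rm(g(t))\| \leq C_1$ and $\|\nabla Rm(g(t))\| \leq C_2/\sqrt{t}$ uniformly on $X$ for $t \in (0,T]$. Integrating the normalized flow equation $\partial_t g_{\alpha\beta} = -2ng_{\alpha\beta} - 2R_{\alpha\beta}$ yields $\|g(t) - g(0)\|_{C^0} = O(t)$; integrating the evolution (\ref{evolutionofxtofel}) of the Christoffel symbols, whose right-hand side is controlled by $\nabla Rm$ and is therefore bounded by $C/\sqrt{t}$, gives $\|\Gamma(g(t)) - \Gamma(g(0))\|_{C^0} = O(\sqrt{t})$. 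Since the second fundamental form of the fixed hypersurface $\partial\mathbf{E}_1$ depends only on the metric and on its Christoffel symbols, the $C^0$ distance between $h^{(t)}_{ij}$ and $h^{(0)}_{ij}$ along $\partial\mathbf{E}_1$ is $O(\sqrt{t})$. Shrinking $T$ if necessary then preserves strict convexity of $\mathbf{E}_1$ for all $t \in [0,T]$.

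A parallel argument keeps the sectional curvature of $g(t)$ uniformly negative outside $\mathbf{E}_1$. Indeed, Proposition \ref{curvaturedecayestimate} gives $\|E(g(t))\| \leq Ce^{-a\rho_0}$ for $t \in [0,T]$, so choosing $\Lambda_0$ possibly larger and $T$ possibly smaller ensures that the sectional curvatures of $g(t)$ remain bounded above by $-1/4$ on $\{\rho \geq 2\Lambda_0\}$. With strict convexity of $\mathbf{E}_1$ in $g(t)$ and negative curvature outside, outward normal geodesics encounter no focal points and extend to infinity, and the $g(t)$-distance function to $\mathbf{E}_1$ is smooth outside. Invoking Theorem 4 of \cite{BM} then identifies $\mathbf{E}_1$ as an essential set for each $g(\cdot,t)$, $t\in [0,T]$.

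The main technical point is that the hypersurface $\partial\mathbf{E}_1$ is defined once and for all by the initial distance function, whereas convexity has to be verified with respect to the moving metric $g(t)$. This is handled entirely by the $O(\sqrt{t})$ bound on the variation of the Christoffel symbols supplied by Shi's estimates --- which is integrable near $t=0$ --- together with the fact that strict convexity is an open condition in the $C^1$ topology of metrics; no estimate beyond what Proposition \ref{curvaturedecayestimate} and Shi's work already supply is needed.
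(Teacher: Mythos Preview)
Your argument is correct and reaches the same conclusion, but the route differs from the paper's. You fix the single hypersurface $\partial\mathbf{E}_1=\{\rho=2\Lambda_0\}$ and track its second fundamental form under the flow by bounding $\|\Gamma(g(t))-\Gamma(g(0))\|=O(\sqrt{t})$ via Shi's derivative estimates; since the second fundamental form of a fixed hypersurface is a zeroth-order expression in $g$ and $\Gamma$, the $O(\sqrt{t})$ closeness preserves positivity for small $T$. The paper instead works with the global barrier $f=\cosh\rho$: it computes $\partial_t(\nabla^2_{g(t)}\cosh\rho)=\sinh\rho\cdot(\text{contractions of }\nabla Ric)$, bounds this by $C t^{-1/2}e^{-a\rho_0}\sinh\rho$ using Proposition~\ref{curvaturedecayestimate}, and integrates to keep $\nabla^2_{g(t)}\cosh\rho\geq\tfrac14\sinh\rho\,g(t)$ for all $\rho\geq\Lambda_0$. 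From this Hessian bound it deduces geodesic convexity of every sublevel set $\{\rho\leq c\}$, $c\geq\Lambda_0$, not just convexity of the one boundary. Your approach is more economical for the stated proposition, since boundary convexity plus negative curvature outside is all Theorem~4 of \cite{BM} requires; the paper's barrier argument yields a stronger conclusion (a whole family of convex sublevel sets) at the cost of one extra Hessian computation, and uses the spatially decaying bound on $\nabla E$ from Proposition~\ref{curvaturedecayestimate} rather than Shi's undecorated $C/\sqrt{t}$.
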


\begin{proof} First we claim that there is $T>0$ and $\Lambda_0$
such that
$$
\nabla^{2}_{g(\cdot, t)}\cosh\rho \geq \frac{\sinh\rho}{4}g(\cdot,
t),
$$
for all $t\in [0, T]$ and $\rho \geq \Lambda_{0}$. Let
$\frac{\partial}{\partial \theta^0}=\frac{\partial}{\partial \rho}$.
Then
$$
\nabla^{2}_{g_{0}}\cosh\rho (\frac{\partial}{\partial
\rho},\frac{\partial}{\partial \rho})=\cosh\rho,
$$
$$
\nabla^{2}_{g_{0}}\cosh\rho (\frac{\partial}{\partial
\rho},\frac{\partial}{\partial \theta ^i})=0,
$$
and
$$
\nabla^{2}_{g_{0}}\cosh\rho (\frac{\partial}{\partial \theta
^i},\frac{\partial}{\partial \theta ^j})=-\Gamma_{ij}^{1}\sinh\rho
=S_{i}^{k}g_{kj}(\rho,\theta)\sinh\rho.
$$
We then can choose $\Lambda _{0}$, which is independent of $t$, such
that for all $\rho \geq \Lambda _{0}$,
$$
S_{i}^{k}g_{kj}(\rho,\theta)\sinh\rho \geq \frac{1}{2}\sinh\rho
\cdot g_{ij}(\rho,\theta),
$$
in the light of (\ref{estimatesp1}), (\ref{estimatesp2}) and
(\ref{estimatesp3}). Therefore
$$
\nabla^{2}_{g_{0}}\cosh\rho \geq \frac{1}{2}\sinh\rho \cdot g_{0}
$$
At $t=0$. On the other hand,
\begin{equation}
\begin{split}
\frac{\partial}{\partial t}(\nabla_{i}\nabla_{j}\cosh\rho)
&= \sinh\rho g^{0l}(R_{jl,i}+R_{il,j}-R_{ij,l})\\
&= \sinh\rho(R_{j0,i}+R_{i0,j}-R_{ij,0}),\\
\end{split}\nonumber
\end{equation}
which implies
$$
|\frac{\partial}{\partial t}(\nabla_{i}\nabla_{j}\cosh\rho)|\leq
C\sinh\rho ||\nabla Ric||\leq
Ct^{-\frac{1}{2}}e^{-\alpha\rho_{0}}\sinh\rho.
$$
Choose $\Lambda_{0}$ large enough and time interval small enough if
necessary, we hence proved the first claim.

To see that $\mathbf{E}_1$ is strictly convex in all $(X, g(t))$.
Let $f=\cosh\rho$. For any $p$, $q\in \mathbf{E}_1$, then $f(p)\leq
\cosh(2\Lambda_{0}),~f(q)\leq \cosh(2\Lambda_{0})$. Let $\gamma_t$
be any geodesic joining $p$ and $q$ in the metric $g(\cdot, t)$.
Then
$$
f(\gamma_t(s))^{''} > 0, \quad \text{for all $s\in (0, 1)$},
$$
from which it is easily seen that
$$
f(\gamma_t(s)) < \cosh(2\Lambda_0),
$$
for all $s\in (0, 1)$. Thus we construct an essential set
$\mathbf{E}$ of $(M, \ g(t))$, for all $t\in [0,T]$.
\end{proof}

By now we may use the regularity results in the previous section and
regularity theorem of \cite{BG} to the metric $g(\cdot, t)$ for
$t\in (0, T]$. First we notice that

\begin{prop}\label{convergencethem} Suppose $(X^{n+1}, \ g)$ is a complete
Riemannian manifold with an essential set $\mathbf{E}$ and that it
satisfies the curvature condition (\ref{deacyofcurv}) with $a > 0$.
Let $g(\cdot, t)$ be the solution to the normalized Ricci flow
$(\ref{normalizedequation})$ and that $\mathbf{E}$ is an essential
set for all $g(\cdot, t)$. Let $\bar g_{ij}(\cdot,
t)=\sinh^{-2}\rho_t g(\cdot, t)$, where $\rho_t$ is the distance
function to $\mathbf{E}$ with respect to the metric $g(\cdot, t)$.
Then there are constants $T$ and $C$ such that
$$
\|\bar{g}(\cdot,t)-\bar{g}(\cdot, 0)\|\leq Ct
$$
for all $t\in [0, T]$.
\end{prop}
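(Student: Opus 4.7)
The plan is to split the difference
\[
\bar g(\cdot,t) - \bar g(\cdot,0) = \sinh^{-2}\rho_t\bigl[g(\cdot,t)-g(\cdot,0)\bigr] + \bigl[\sinh^{-2}\rho_t - \sinh^{-2}\rho_0\bigr]g(\cdot,0),
\]
and show that each summand is $O(t)\,\bar g(\cdot,0)$ in pointwise operator norm.

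For the first summand I would rewrite the normalized Ricci flow in terms of $E$. Contracting the identity defining $E$ gives $R_{\alpha\beta} + ng_{\alpha\beta} = g^{\gamma\delta}E_{\gamma\alpha\delta\beta}$, so (\ref{normalizedequation}) reads
\[
\tfrac{\partial}{\partial t}g_{\alpha\beta} = -2\,g^{\gamma\delta}E_{\gamma\alpha\delta\beta}.
\]
Proposition \ref{curvaturedecayestimate} then yields the pointwise bound $|\partial_t g|_{g(t)} \leq C\|E\|_{g(t)} \leq Ce^{-a\rho_0}$. Integrating in $t$ and using the resulting uniform multiplicative equivalence of $g(t)$ and $g(0)$ for small $t$ (a standard Gronwall step) produces
\[
\bigl|g(\cdot,t) - g(\cdot,0)\bigr|_{g(0)} \leq Ct\,e^{-a\rho_0}.
\]
Multiplying by $\sinh^{-2}\rho_t \sim e^{-2\rho}$ and comparing with $\bar g(0)$, whose components are uniformly equivalent to $\sinh^{-2}\rho_0\cdot g(0)$, converts this into the desired $Ct$ bound on the first summand.

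For the second summand I would first establish $|\rho_t(x) - \rho_0(x)| \leq Ct$ uniformly in $x$. For any $g(0)$-minimizing geodesic $\gamma$ from $\mathbf{E}$ to $x$ parametrized by $g(0)$-arclength $s\in[0,\rho_0(x)]$, the pointwise estimate above gives
\[
L_{g(t)}(\gamma) = \int_0^{\rho_0(x)}\sqrt{g(t)(\gamma',\gamma')}\,ds \leq \int_0^{\rho_0(x)}\bigl(1+Cte^{-as}\bigr)\,ds \leq \rho_0(x) + \tfrac{C}{a}\,t,
\]
so $\rho_t(x) \leq \rho_0(x) + C't$; swapping the roles of $g(t)$ and $g(0)$ yields the reverse inequality (Proposition \ref{derivativeestimate1} ensures $\rho_t$ is smooth away from $\mathbf{E}$, so no cut-locus issues intervene). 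A mean value computation then gives
\[
\bigl|\sinh^{-2}\rho_t - \sinh^{-2}\rho_0\bigr| \leq C\,e^{-2\rho_0}\,|\rho_t-\rho_0| \leq C'\,t\,\sinh^{-2}\rho_0,
\]
and multiplying by $g(0)$ bounds the second summand by $Ct\,\bar g(0)$.

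The hard part of the argument is the uniform distance estimate $|\rho_t-\rho_0|\leq Ct$. Besides relying on Proposition \ref{derivativeestimate1} to guarantee that $\mathbf{E}$ remains essential for $g(t)$, one must ensure that the length-comparison integral $\int_0^{\infty}e^{-as}\,ds$ is uniformly finite, which is precisely where the hypothesis $a>0$ enters in a crucial way; without it, the pointwise bound $|\partial_t g|_{g(t)}=O(e^{-a\rho_0})$ would not integrate to a geometric bound independent of $\rho_0(x)$.
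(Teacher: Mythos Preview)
Your argument is correct and follows essentially the same route as the paper: both reduce the problem to a uniform bound $|\rho_t-\rho_0|\leq Ct$, obtained by integrating the exponential decay of $E$ along a minimizing geodesic from $\mathbf{E}$ to $x$, and then combine this with the pointwise control on $g(t)-g(0)$ coming from $\partial_t g = -2\,g^{\gamma\delta}E_{\gamma\alpha\delta\beta}$. The only cosmetic difference is that the paper differentiates the length functional $L(\gamma,t)$ in $t$ and then integrates, whereas you integrate the metric bound first and then compute the length; both yield the same $\int_0^\infty e^{-as}\,ds$ estimate that makes the constant independent of $\rho_0(x)$.
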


\begin{proof} According to the equations
$$
\frac{\partial}{\partial
t}g_{\alpha\beta}=-2ng_{\alpha\beta}-2R_{\alpha\beta},
$$
we easily see that
$$
\|g(\cdot, t)-g(\cdot, 0)\| \leq Ct,
$$
where $C$ is independent of $t$.

Let $\rho(\cdot, t)$ be the distance function to $B =
\partial\mathbf{E}$ with respect to $g(\cdot, t)$. For any $x\in
M\backslash \mathbf{E}$, suppose that $p\in \mathbf{B}$ is the point
such that the length of the geodesic joining $x$ and $p$ is just the
distance of $x$ to $\mathbf{B}$ with respect to $g(\cdot, 0)$.
Denote this geodesic by $\gamma(s)$, where $s\in [0,1],~\gamma(0)=
p, ~\gamma(1)=x, ~\rho(x,0)$ $ = L(\gamma,0)$ and $L(\gamma,t)$
denotes the length of $\gamma$ with respect to $g(\cdot, t)$. We
know that $g(\cdot, t)$ and $g(\cdot, 0)$ are uniformly
quasi-isometric when $t\in [0,T]$. Hence there exists a constant
$\lambda$, independent of $t$, such that
$$
\lambda^{-1}L(\gamma,0)\leq L(\gamma,t)\leq\lambda
L(\gamma,0),~\text {for any}~ t\in [0,T].
$$
On the other hand we may compute
\begin{equation}
\begin{split}
\frac{\partial}{\partial t}L(\gamma,t)&=\frac{\partial}{\partial
t}\int_{0}^{1}(g_{\alpha\beta}(\gamma,t)\dot{\gamma}^{\alpha}\dot{\gamma}^{\beta})^{\frac{1}{2}}ds\\
&=\int^{1}_{0}(g_{\alpha\beta}(\gamma,t)\dot{\gamma}^{\alpha}\dot{\gamma}^{\beta})^{
-\frac{1}{2}}(-ng_{\mu\nu}(\gamma,t)-R_{\mu\nu}(\gamma,t))
\dot{\gamma}^{\mu}\dot{\gamma}^{\nu}ds\\
&=-\int^{1}_{0}(g_{\alpha\beta}(\gamma,t)\dot{\gamma}^{\alpha}\dot{\gamma}^{\beta})^{
-\frac{1}{2}}E_{\mu\nu}(\gamma,t)\dot{\gamma}^{\mu}\dot{\gamma}^{\nu}ds,
\end{split}\nonumber
\end{equation}
where
$E_{\alpha\beta}=g^{\gamma\delta}(t)E_{\alpha\gamma\beta\delta}(g(\cdot,
t))$, and
$$
E_{\alpha\beta\gamma\delta}(g) = R_{\alpha\beta\gamma\delta}(g) +
(g_{\alpha\gamma}g_{\beta\delta}-g_{\alpha\delta}g_{\beta\gamma}).
$$
Hence according to Proposition \ref{curvaturedecayestimate} we have
\begin{equation}
\begin{split}
\int^1_0(g_{ij}(\gamma,t)\dot\gamma^i\dot\gamma^j)^{-\frac12}E_{ij}(\gamma,t)\dot\gamma^i\dot\gamma^jds
&\leq \int^1_0 Ce^{-\alpha
sL_{0}}\lambda^{\frac{1}{2}}(g_{ij}(\gamma,0)\dot{\gamma}^{i}\dot\gamma^{j})^{\frac{1}{2}}ds\\
&\leq C\lambda^{\frac{1}{2}}L_{0}\int^{1}_{0}e^{-a s L_0 }ds\\
&\leq C\lambda^{\frac{1}{2}}a^{-1},
\end{split}\nonumber
\end{equation}
which implies
$$
|L(\gamma,t)-L_{0}|\leq\frac{C\lambda^\frac12}{a}t,~\text {for any}~
t\in [0,T],
$$
and
$$
\rho(x,t)\leq L(\gamma,t)\leq
L_{0}+\frac{C\lambda^\frac12}{a}t=\rho(x,0)+\frac{C\lambda^\frac12}{a}t.
$$
Similarly we get
$$
\rho(x,0)=L_{0}\leq \rho(x,t)+\frac{C\lambda^\frac12}{a}t.
$$
Therefore
$$
|\rho(x,t)-\rho(x,0)|\leq\frac{C\lambda^\frac12}{a}t,
$$
for any $t\in [0, T]$. Thus
$$
\|\bar{g}(x,t)-\bar{g}(x,0)\|\leq C(\lambda,a)t,
$$
for all $x\in M\setminus \mathbf{E}$, and all $t\in [0,T]$, which
completes the proof of the proposition.
\end{proof}

As another easy consequence of the use of the help from the Ricci
flow we now give the proof of Theorem \ref{holdercontinuous}.

\begin{proof}[Proof of Theorem \ref{holdercontinuous}]
Let $g(x,t)$ be the metric constructed by the normalized Ricci flow
(\ref{normalizedequation}). Then, according to Theorem A in
\cite{BG} and Proposition \ref{curvaturedecayestimate} in the above,
we have
$$
\|\bar {g}(x,t) \|_{C^{0, a}} \leq \Lambda t^{-\frac12},
$$
on $\bar X$. Recall from the proof of Proposition
\ref{convergencethem}
$$
|\bar g(x,t)-\bar g(x, 0)|\leq C t.
$$
Hence, if let $\mu =\frac23 a$, we have, for any $t>0$,
\begin{equation}
\begin{split}
\frac{\|\bar g(x, 0)-\bar g(y, 0)\|}{|x-y|^\mu}&\leq \frac{\|\bar
g(x,t)-\bar g(x, 0)\|}{|x-y|^\mu}+\frac{\|\bar g(y,t)-\bar
g(y, 0)\|}{|x-y|^\mu}\\
&+\frac{\|\bar g(x,t)-\bar g(y,t)\|}{|x-y|^\mu}\\
&\leq C t\cdot |x-y|^{-\mu}+\Lambda t^{-\frac12}\cdot |x-y|^{a-\mu}
\end{split}
\end{equation}
Take $t=|x-y|^{\frac23 a}$, we obtain
$$
\frac{\|\bar g(x, 0)-\bar g(y, 0)\|}{|x-y|^\mu}\leq C.
$$
Thus we finish the proof.
\end{proof}

Next we would like to drop the curvature condition
(\ref{deacyofderivativeofcurv}) in Theorem \ref{mainresult1} with
the help from the Ricci flow method indicated in the above. By
Theorem \ref{mainresult1}, if we denote $\{x^\gamma\}$ to be
harmonic coordinates with respect to the metric $\bar g(\cdot, t)$,
then $\bar{g}_{\alpha\beta}(x, t)$ is smooth in the interior of $X$
and it is $W^{2,p}$ up to the boundary, in the differential
structure associated with the harmonic coordinates. For a technical
reason we first smoothen each $\bar g(\cdot, t)$ in order to allow
us to apply Theorem 5.4 in \cite{GP} (page 187). We continue to use
the notations in the proof of Theorem \ref{mainresult1} in the
previous section.

\begin{lemm}\label{convergencethem2} Suppose $\bar{g}_{\alpha\beta}(\cdot, t)$ is in
$W^{2,p}$ for some $p>\frac{n+1}{2}$ in a coordinate neighborhood
$\mathcal U$ in the doubled manifold $N$. Let
$\bar{g}^\epsilon_{\alpha\beta}(\cdot, t)$ be the
$\epsilon$-mollification of $\bar{g}_{\alpha\beta}(\cdot, t)$, i.e.
let $\mu(x)$ is a smooth function with compact support inside
$\mathcal U$ such that $\int \mu(x)dx=1$ and let
$\mu_{\epsilon}(x)=\frac{1}{\epsilon^{n+1}}\mu(\frac{x}{\epsilon})$,
then
$$
\bar{g}^{\epsilon}_{\alpha\beta}(x, t) =
\int_{M}\bar{g}_{\alpha\beta}(y, t)\mu_{\epsilon}(x-y)dy.
$$
Then we have
\begin{enumerate}
  \item $\bar{g}^{\epsilon}_{\alpha\beta}(\cdot, t)$ is $C^\infty$ smooth;
  \item $\|Rm(\bar{g}^\epsilon)\|_{L^p}\leq C$, where $C$ is independent of
  k;
  \item There is $\epsilon_k\rightarrow 0$ so that
$\bar{g}^{\epsilon_k}_{\alpha\beta}(\cdot, \epsilon_k)$ converges to
$\bar{g}_{\alpha\beta}(\cdot, 0)$ uniformly in any compact subset of
$\mathcal U$.
\end{enumerate}
\end{lemm}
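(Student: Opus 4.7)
The plan is to prove the three assertions separately, each being a direct application of standard mollifier technology combined with facts already in the excerpt. Let me sketch each in turn.

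For assertion (1), smoothness of $\bar g^\epsilon_{\alpha\beta}(\cdot, t)$ is an immediate consequence of the standard property of convolution with a smooth compactly supported kernel: for any multi-index $\gamma$, one can transfer $\partial^\gamma$ onto $\mu_\epsilon$ inside the integral, which shows that $\bar g^\epsilon_{\alpha\beta}$ has derivatives of all orders. No quantitative information is needed here.

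For assertion (2), the key observation is that differentiation commutes with mollification, so $\partial^\gamma \bar g^\epsilon_{\alpha\beta} = (\partial^\gamma \bar g_{\alpha\beta})_\epsilon$ for $|\gamma|\leq 2$, and by Young's inequality $\|\partial^\gamma \bar g^\epsilon\|_{L^p}\leq \|\partial^\gamma \bar g\|_{L^p}$. Since the Riemann tensor has the schematic form $Rm(\bar g^\epsilon)\sim (\bar g^\epsilon)^{-1}\partial^2 \bar g^\epsilon + (\bar g^\epsilon)^{-2}(\partial \bar g^\epsilon)^2$, I need three ingredients. First, the $\partial^2$ term is controlled in $L^p$ by $\|\bar g\|_{W^{2,p}}$. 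Second, because $p>(n+1)/2$, two-step Sobolev embedding gives $W^{2,p}\hookrightarrow W^{1, 2p}_{\mathrm{loc}}$ (since $2p\leq p(n+1)/(n+1-p)$ is equivalent to $p>(n+1)/2$, or is automatic when $p>n+1$), so $\partial\bar g\in L^{2p}_{\mathrm{loc}}$ and hence $(\partial \bar g^\epsilon)^2$ is bounded in $L^p$ uniformly in $\epsilon$. Third, since $p>(n+1)/2$ also gives $\bar g\in C^0$ by Morrey-type embedding, $\bar g^\epsilon\to\bar g$ uniformly on compacts, so for small $\epsilon$ the inverse $(\bar g^\epsilon)^{-1}$ is uniformly bounded. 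Combining the three ingredients yields $\|Rm(\bar g^\epsilon)\|_{L^p}\leq C$ uniformly.

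For assertion (3), I would use a diagonal argument that combines two separate convergences. First, Proposition \ref{convergencethem} gives
$$
\|\bar g(\cdot, t)-\bar g(\cdot, 0)\|\leq Ct
$$
uniformly on any compact subset, so $\bar g(\cdot, t)\to \bar g(\cdot, 0)$ uniformly as $t\to 0$. Second, for each fixed $t\in[0,T]$ the metric $\bar g(\cdot, t)$ is continuous (indeed $C^{1,\mu}$, by Theorem \ref{mainresult1} applied to $g(\cdot, t)$), so standard mollifier theory yields $\bar g^\epsilon(\cdot, t)\to \bar g(\cdot, t)$ uniformly on compact subsets of $\mathcal U$ as $\epsilon\to 0$. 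Given any compact $K\Subset \mathcal U$, I would choose $\epsilon_k\downarrow 0$ such that $\sup_K \|\bar g^{\epsilon_k}(\cdot, \epsilon_k)-\bar g(\cdot, \epsilon_k)\|<1/k$; the triangle inequality
$$
\|\bar g^{\epsilon_k}(\cdot,\epsilon_k)-\bar g(\cdot,0)\|\leq \|\bar g^{\epsilon_k}(\cdot,\epsilon_k)-\bar g(\cdot,\epsilon_k)\|+\|\bar g(\cdot,\epsilon_k)-\bar g(\cdot,0)\|
$$
then delivers uniform convergence on $K$. A standard exhaustion of $\mathcal U$ by compacts, together with a further diagonalization, produces a single sequence $\epsilon_k\to 0$ with the desired property on every compact subset.

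The main obstacle is assertion (2): the quadratic-in-$\partial \bar g$ term in the curvature must be genuinely uniformly bounded in $L^p$, and this hinges on checking that the hypothesis $p>(n+1)/2$ is precisely what is needed to place $\partial \bar g$ in $L^{2p}_{\mathrm{loc}}$ via Sobolev embedding. The smoothness and convergence assertions are by contrast essentially immediate once this $L^p$ bookkeeping is in place.
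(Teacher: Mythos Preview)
Your proposal is correct and follows essentially the same route as the paper's proof: assertion (1) is dismissed as standard; for (2) the paper also invokes the Sobolev embedding $W^{1,p}\subset L^{2p}$ (valid precisely when $p>\frac{n+1}{2}$) to control the quadratic first-derivative term in the curvature, and for (3) the paper uses the same triangle-inequality splitting with Proposition~\ref{convergencethem} handling $\|\bar g(\cdot,t)-\bar g(\cdot,0)\|$ and continuity of $\bar g$ (from $W^{2,p}\hookrightarrow C^0$) handling $\|\bar g^\epsilon(\cdot,t)-\bar g(\cdot,t)\|$. Your write-up is in fact more explicit than the paper's, spelling out the uniform bound on $(\bar g^\epsilon)^{-1}$ and the diagonalization over compacts.
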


\begin{proof} We only need to prove the last two statements. For
convenience we sometime drop the indices for the metrics if there is
no confusion. Note that $W^{1,p}\subset L^{2p}$ since
$p>\frac{n+1}{2}$, we hence have
$$
\|(Rm(\bar g^\epsilon))\|_{L^{p}}\leq C \|\bar g\|_{W^{2,p}}.
$$
Now let us prove the third statement. Clearly we have
$$
|\bar{g}^{\epsilon}_{\alpha\beta}(\cdot, t) -
\bar{g}_{\alpha\beta}(\cdot, 0)|  \leq
|\bar{g}^{\epsilon}_{\alpha\beta}(\cdot,
t)-\bar{g}_{\alpha\beta}(\cdot, t)| + |\bar{g}_{\alpha\beta}(\cdot,
t)-\bar{g}_{\alpha\beta}(\cdot, 0)|.
$$
For the second term, by Proposition \ref{convergencethem}, we know
$$
\lim_{t\to 0}\|\bar{g}(\cdot, t)-\bar{g}(\cdot, 0))\|=0.
$$
As for the first term, since $p > \frac {n+1}2$, we know $\bar g$ is
continuous due to the assumptions. Therefore it is rather easy to
see that
$$
\lim_{\epsilon\to 0}\|\bar g^\epsilon - \bar g\|= 0.
$$
Thus we finish to prove the Proposition.
\end{proof}

For simplicity we denote $\bar{g}^{\epsilon_k} (\cdot, \epsilon_k)$
as $\bar{g}^k$ in the following.

\begin{lemm}\label{uniformestimateofmetric} Suppose $(X^{n+1}, \ g)$
is a complete manifold with an essential set and that it satisfies
the curvature condition (\ref{deacyofcurv}) with $a > 2 - \frac
2{n+1}$. Let $\mathcal U$ be the coordinate neighborhood in the
doubled manifold where $\bar g^k$ was constructed in Lemma
\ref{convergencethem2}. Then for any point on $T$ it admits a
neighborhood $\mathcal{V}\subset \mathcal U$ so that
\begin{enumerate}
  \item  for each $\bar g^k$ there is harmonic coordinates $H_k =(y^0_k, y^1_k, \cdots, y^n_k)$ on
  $\mathcal{V}$;
  \item there is a positive constant $\delta_0$, which is independent of $k$, such that
  $\det (dH_k)\geq \delta_0>0$ on $\mathcal{V}$;
  \item $\|\hat{g}^{k}_{\alpha\beta}\|_{W^{2,p}(\mathcal{V})}\leq C$,
  for some $p > \frac{n+1}{2}$,
  and $\hat{g}^{k}_{\alpha\beta}$ is components of $\bar g^k$ under harmonic coordinates
  $(\mathcal{V}, y^\gamma_k)$, and $C$ is
  constant that is independent of $k$.
\item If $a \geq 2$, then (3) is true for any $p>1$.
\end{enumerate}

\end{lemm}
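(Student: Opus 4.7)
The strategy is to apply the compactness theorem for harmonic coordinates under $L^p$ curvature bounds (Theorem 5.4 of \cite{GP}) to the smoothed sequence $\{\bar g^k\}$ on the doubled manifold $N$. That theorem produces, under a uniform $L^p$ curvature bound with $p>(n+1)/2$, a uniform two-sided metric bound, and a uniform non-collapsing control, harmonic coordinates on a neighborhood of definite size with uniform $W^{2,p}$ bounds on the metric components. All four conclusions are of this form, so the task reduces to verifying those hypotheses uniformly in $k$.

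The first step is the uniform $L^p$ curvature bound. By Proposition \ref{curvaturedecayestimate} the Ricci flow preserves the asymptotic decay $\|E(g(\cdot,t))\|\leq Ce^{-a\rho_0}$ uniformly in $t\in[0,T_0]$, and by Proposition \ref{derivativeestimate1} the set $\mathbf{E}_1$ remains an essential set for every $g(\cdot,t)$. Consequently the pointwise curvature estimates of Proposition \ref{estimatesofcompactifiedcurvature}, derived from the Riccati equations alone, apply time-slice by time-slice to $\bar g(\cdot,t)$, giving
$$
\|Rm(\bar g(\cdot,t))\|_{\bar g(\cdot,t)}\leq \Lambda\,\tau^{-(2-a)}
$$
with $\Lambda$ independent of $t$. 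Under the assumption $a>2-\tfrac{2}{n+1}$ one has $(2-a)<2/(n+1)$, so one can fix $p$ with $(n+1)/2<p<1/(2-a)$ for which $\|Rm(\bar g(\cdot,t))\|_{L^p(\mathcal U)}\leq C$ uniformly in $t$; item (2) of Lemma \ref{convergencethem2} then transfers this to $\|Rm(\bar g^k)\|_{L^p(\mathcal U)}\leq C$ uniformly in $k$. When $a\geq 2$, Proposition \ref{estimatesofcompactifiedcurvature} gives a uniform sup bound on $Rm(\bar g(\cdot,t))$, so any $p>1$ is admissible; this yields conclusion (4).

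The second step is the uniform two-sided metric bound and non-collapsing estimate near $T$. The former follows from Proposition \ref{convergencethem}, which shows $\bar g(\cdot,t)$ is uniformly close to the fixed continuous metric $\bar g(\cdot,0)$, combined with standard mollifier bounds. The injectivity radius bound near $T$ is a consequence of the uniform convexity construction in Proposition \ref{derivativeestimate1} together with the uniform curvature bound from Proposition \ref{curvaturedecayestimate}. With these inputs, Theorem 5.4 of \cite{GP} applies to each $\bar g^k$ and produces, on a common neighborhood $\mathcal V$ of any point of $T$, harmonic coordinates $H_k=(y_k^0,\ldots,y_k^n)$ with $\det(dH_k)\geq \delta_0>0$ uniformly and with $\|\hat g^k_{\alpha\beta}\|_{W^{2,p}(\mathcal V)}\leq C$ independent of $k$, giving conclusions (1)--(3).

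The principal obstacle is making the harmonic radius control genuinely uniform across the boundary $T$: one must verify that the mollification in Lemma \ref{convergencethem2} is compatible with the even extension defining the double (so that the constructed harmonic coordinates respect the reflection symmetry) and that the constants in Theorem 5.4 of \cite{GP} do not degenerate as $\epsilon_k\to 0$. Once this compatibility is secured, all four statements of the lemma follow directly from the cited compactness theorem.
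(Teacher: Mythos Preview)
Your proposal is correct and follows essentially the same route as the paper: verify a uniform $L^p$ curvature bound for the $\bar g^k$ via Proposition~\ref{estimatesofcompactifiedcurvature} and Lemma~\ref{convergencethem2}, check a uniform non-collapsing condition, and then invoke Theorem~5.4 of \cite{GP}. The one small difference is in the non-collapsing step: the paper obtains it directly from the uniform convergence $\bar g^k\to\bar g(\cdot,0)$ (Lemma~\ref{convergencethem2}(3)), which immediately gives a uniform lower volume bound $Vol(B(q,s))\geq \Lambda s^{n+1}$, whereas you route through an injectivity-radius argument via Proposition~\ref{derivativeestimate1}; the paper's version is shorter and avoids any appeal to convexity or injectivity radius.
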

\begin{proof}
Indeed, due to Proposition \ref{estimatesofcompactifiedcurvature}
and Lemma \ref{convergencethem2}, we know that
$$
\|Rm(\bar{g}^k) \|_{L^p}\leq C.
$$
Since $\bar{g}^{k}$ converges to  $ \bar{g}$ uniformly, we see that
for any $q$ in $\bar X$ and $s\leq 1$, we have $Vol(B(q,s))\geq
\Lambda s^n$, where $B(q,s)$ is geodesic ball with radius $s$ and
center $q$ and $\Lambda$ is independent of $s$. Hence, by Theorem
5.4 in \cite{GP} (P.187), we obtain the existence of the
neighborhood $\mathcal V$ and therefore the Lemma is proven.
\end{proof}

Notice that we have proved that $\bar{g}^{k}_{\alpha\beta}$
converges to $\bar{g}_{\alpha\beta}$ uniformly and that
$$
\|\hat{g}^{k}_{\alpha\beta}\|_{W^{2,p}(\mathcal{V})}\leq C
$$
where C is independent of $k$ and $p > n+1$ when $a > 2 - \frac
1{n+1}$. Because each $y^{\gamma}_k$ is harmonic with respect to
metric $\bar g^k$, from the standard theory of elliptic PDE we have
$$
||y^{\gamma}_k||_{C^{2,\mu}(\mathcal{V})}\leq C,
$$
for some $\mu>0$. Now we are well prepared to prove
Theorem\ref{mainresult2}.

\begin{proof}[Proof of Theorem \ref{mainresult2}]
The a priori estimates from the standard theory of elliptic PDE are
always available, but the missing key point is to find a fixed
common domain $\mathcal V$ for the family of elliptic PDE's and
their solutions, which has been shown using Theorem 5.4 in \cite{GP}
(P.187) in the proof of Lemma \ref{uniformestimateofmetric}. Indeed,
by Theorem \ref{mainresult1} and Lemma
\ref{uniformestimateofmetric}, for each $k$, we have differential
structure $\Gamma_k =\{(\mathcal{V},y^\gamma_k)\}$ and the
components $\hat g^k_{\alpha\beta}$ for $\bar g^k$ in the harmonic
coordinates $\{y^\gamma_k\}$. Then, taking a subsequence if
necessary, we may assume
\begin{enumerate}
\item $y^\gamma_k$ converges to $y^\gamma$ in $C^{2, \mu'}(\mathcal{V'})$
      for some $\mu' < \mu$ and $\mathcal V' \subset\subset \mathcal V$
      and;
\item $\hat g^k_{\alpha\beta}$ converges weakly to $\hat g_{\alpha\beta}$
      in $W^{2,p}(\mathcal V)$,
\end{enumerate}
where clearly $\hat g_{\alpha\beta}$ is the components of the metric
$\bar g(\cdot, 0)$ in the coordinates $y^\gamma$ if $\{y^\gamma\}$
is indeed a coordinate system, which is readily seen because
$\{y_k^\gamma\}$ are coordinate systems.

Finally, suppose $(\mathcal{U},y^\gamma)$, $(\mathcal{V},z^\gamma)
\in \Gamma$, then $z^\gamma$ is harmonic in
$\mathcal{U}\bigcap\mathcal{V}$ with respect to metric $\bar g$, and
$\bar g$ is $W^{2,p}$-smooth under harmonic coordinates
$(\mathcal{U}\bigcap\mathcal{V}, $ $y^\gamma)$, thus, we get
$$
\| \frac{\p z^\gamma}{\p
y^\nu}\|_{W^{2,p}(\mathcal{U}\bigcap\mathcal{V})}\leq C,
$$
where $C$ is a constant depends only $\Gamma$ and $p$. Thus we
finish to prove the Theorem \ref{mainresult2}.
\end{proof}

\section {Rigidity Theorems}

About rigidity in this context there are three different approaches
given in \cite{AD}, \cite{Q1} and \cite{ST} (see also \cite{DJ})
respectively. In \cite{AD} the manifolds are assumed to be spin and
the regularity of the conformal compactification is assumed to be
very high. In \cite{Q1} it still assumes the regularity of order
$C^{3, \alpha}$, even though no spin condition is assumed for $n\leq
7$. In \cite{ST} it takes the advantage of the volume comparison of
geodesic spheres, hence it assumes the manifold to have a pole. Very
recently in \cite{DJ} the authors seemed to be able to relate the
conformal infinity and geodesic spheres and obtained a nice rigidity
theorem for asymptotically hyperbolic with $C^2$ conformal
compactifications. Hence it is easily seen that Theorem
\ref{mainresult3} becomes a significant step stone to utilize the
regularity theorem in \cite{CDLS} of the conformally compact
Einstein metrics to apply any available rigidity result. Let us
first state and prove a rigidity theorem as an easy consequence of
Theorem \ref{mainresult3}.

\begin{theo}\label{Einstein}
Suppose that $(X^{n+1}, \ g)$ is a complete Einstein manifold with
$Ric = -ng$. And suppose that it has an essential set $\mathbf E$
and that it satisfies the curvature condition (\ref{deacyofcurv})
with $a > 2$. Also Suppose that $X^{n+1}$ is simply connected at the
infinity. Then $(X^{n+1}, \ g)$ is a standard hyperbolic space if
$4\leq n\leq 6$ or $X^{n+1}$ is spin if $n\geq 7$.  And it is a
standard hyperbolic space if in addition we assume that $\int_X \|Rm
-\mathbf{K}\|d\mu_g <\infty$ if $n=3$.
\end{theo}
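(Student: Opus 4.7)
The plan is to combine Theorem \ref{mainresult3} with existing rigidity theorems for conformally compact Einstein manifolds. The argument proceeds in three main steps, with a preliminary reduction.

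First, I would promote the curvature decay (\ref{deacyofcurv}) to the derivative decay (\ref{deacyofderivativeofcurv}) with $k=1$ and some $b>2$. On an Einstein manifold with $Ric = -ng$, the second Bianchi identity forces $\nabla Rm$ to satisfy an elliptic system coupled to $Rm - \mathbf{K}$ (the Weitzenb\"ock formula for $\nabla Rm$ on an Einstein background has only algebraic $Rm$-terms as lower order coefficients). A Shi-type interior estimate \cite{Shi}, carried out in the parallel setting of \cite{BG}, transfers the decay rate $a$ of $\|Rm - \mathbf{K}\|$ to the decay of $\|\nabla Rm\|$ at the same rate. Since $a > 2$, Theorem \ref{mainresult3} then applies and produces a $C^{2,\alpha}$ conformal compactification $\bar g$ on $\bar X$.

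Second, I would upgrade the regularity of $\bar g$. Once we know that an Einstein metric admits a $C^{2,\alpha}$ conformal compactification, the regularity results of Chru\'sciel, Delay, Lee and Skinner \cite{CDLS} show that $\bar g$ is polyhomogeneous up to $\mathbf{B}$, and in particular $C^{3,\alpha}$ (in fact smoother) at the boundary. This is precisely the regularity required to invoke the rigidity theorems of Qing \cite{Q1} and Andersson--Dahl \cite{AD}. At the same time, the estimates on the second fundamental form in \S 2 (in particular \eqref{estimatesp1}), together with the hypothesis that $X$ is simply connected at infinity, allow one to identify the conformal infinity $(\mathbf{B}, [\bar g|_{\mathbf{B}}])$ with the standard round sphere $(S^n, [g_{\mathrm{round}}])$: the decay forces the induced boundary metric to be of constant sectional curvature, and simple connectedness then closes the classification via standard space-form arguments.

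Third, invoke the appropriate rigidity theorem, depending on $n$ and the spin hypothesis. For $n \geq 7$ with $X$ spin, the positive mass theorem of Andersson--Dahl \cite{AD} for asymptotically hyperbolic spin manifolds applies, and the equality case forces $(X, g) \cong \mathbb{H}^{n+1}$. For $4 \leq n \leq 6$, the rigidity theorem of Qing \cite{Q1} applies without the spin assumption in this dimension range, giving the same conclusion. For $n = 3$, the additional integrability assumption $\int_X \|Rm - \mathbf{K}\| d\mu_g < \infty$ supplies exactly the input required by the Shi--Tian approach \cite{ST}. The main obstacle I anticipate is the first step: verifying quantitatively that no loss of decay rate occurs when passing from $Rm$ to $\nabla Rm$, so that $b > 2$ rather than only, say, $b > a/2$. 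The bootstrap is standard for Einstein metrics, but the exact bookkeeping must be done carefully to feed directly into the hypotheses of Theorem \ref{mainresult3}.
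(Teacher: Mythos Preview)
Your overall architecture matches the paper's: pass from (\ref{deacyofcurv}) to (\ref{deacyofderivativeofcurv}) via the Einstein condition and \cite{BG}, invoke Theorem \ref{mainresult3} to get a $C^{2,\alpha}$ compactification, upgrade via \cite{CDLS}, and feed into \cite{Q1}/\cite{AD}. The first step and the regularity steps are essentially as in the paper (the paper cites Theorem 4.3 of \cite{BG} explicitly for the no-loss-of-rate claim you worry about, so that concern is already settled).

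The genuine gap is in your identification of the conformal infinity. The estimate \eqref{estimatesp1} only says the second fundamental form of $\Sigma_\rho$ differs from the identity by $O(e^{-2\rho})$; via Gauss equations this yields boundedness of $Rm(\bar g)$ (as in Proposition \ref{estimatesofcompactifiedcurvature}), but it does \emph{not} force the induced boundary metric $\bar g|_{\partial X}$ to have constant sectional curvature, nor even to be locally conformally flat. Bounded curvature of $\bar g$ near the boundary is much weaker than vanishing of the Weyl (or Cotton) tensor of the limit metric. The paper closes this by citing Theorem 2.6 of \cite{ST}, which shows that under (\ref{deacyofcurv}) with $a>2$ the conformal infinity is locally conformally flat; for $n=3$ this is precisely where the extra hypothesis $\int_X \|Rm - \mathbf{K}\|\,d\mu_g < \infty$ is consumed. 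Combined with simple connectedness at infinity, one then gets the round sphere as conformal infinity.

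This also means your bookkeeping for $n=3$ is misplaced: the integral condition is not used at the final rigidity step via a Shi--Tian volume comparison, but earlier, to identify the conformal infinity. After that identification, the paper runs \cite{CDLS} (to reach $C^\infty$ compactness) and then \cite{Q1}/\cite{AD} uniformly for all the stated $n$; no separate \cite{ST}-type argument is invoked for $n=3$.
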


\begin{proof}
Because of (\ref{deacyofcurv}) and Einstein equations, applying
Theorem 4.3 in \cite{BG}, we see that
$$
\|\nabla Rm\| \leq C e^{- a \rho}.
$$
Then by Theorem \ref{mainresult3} proven in \S3, we see that $\bar
g$ is $C^{2,\mu}$ up to the infinity boundary for some $\mu\in (0,
1)$.

On the other hand, due to the curvature condition
(\ref{deacyofcurv}) with $a > 2$ (plus $\int_M
\|Rm-\mathbf{K}\|d\mu_g <\infty$ for $n=4$) and Theorem 2.6 in
\cite{ST}, we see that the conformal infinity $(M^n, [\hat g])$ of
$(X^{n+1}, \ g)$ is locally conformally flat. Then by simply
connectedness of $X$ at the infinity we see that the boundary $M^n$
of $X$ is simply connected. Hence the conformal infinity $(M^n,
[\hat g])$ is conformally equivalent to the standard sphere. As
$\bar g$ is $C^{2, \mu}$ smooth at the infinity boundary, we know
that there is a positive function $u\in C^{2, \mu}(M^n)$ so that
$u^{\frac{4}{n-3}}\bar g$ is the metric of the standard sphere, i.e.
$u$ satisfies
$$
\Delta_{\bar g}u- \frac{n-2}{4(n-1)} Ru + \frac 14 n(n-2)
u^{\frac{n+2}{n-2}}=0,
$$
By setting
$$
u(\rho,\theta)=u(\theta),
$$
we may assume $u$ is defined on $X$, hence $u^{\frac{4}{n-2}} \cdot
\bar g $ is $C^{2,\mu}$ smooth near the infinity boundary $M$ and
its restriction on $M$ is the standard sphere metric. Now due to
Theorem A in \cite{CDLS}, we know that in fact $(X^{n+1}, \ g)$ is
conformally compact of order $C^{\infty}$. Then the theorem follows
from the results in \cite{Q1} and \cite{AD}.
\end{proof}

To get rigidity results for asymptotically hyperbolic manifolds with
only $Ric \geq - ng$ we need to have the following technical lemma,
which is an improvement of Lemma 5.1 in \cite{jL}, based on an idea
from \cite{CDLS}. Namely,

\begin{lemm}\label{specialdefiningfunction} Suppose $(X^{n+1}, \ g)$
is a conformally compact manifold of regularity $C^{2, \lambda}$ for
$\lambda\in (0, 1)$. Then, for any $C^{2, \lambda}$ defining
function $r$ and $\bar g = r^2 g \in C^{2, \lambda}$, there is a
$C^{2, \lambda}$ geodesic defining function $x$ such that
$$
x^2 g = r^2 g = \bar g
$$
on the boundary $\p X$ and $x^2 g\in C^{2, \lambda}$.
\end{lemm}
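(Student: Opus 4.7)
The plan is to seek $x$ in the form $x = re^{\omega}$ and determine $\omega$ by requiring the eikonal $|dx|^2_{x^2 g} = 1$ to hold in a collar of $\partial X$. Using $g = r^{-2}\bar g$ one computes
\[
|dx|^2_{x^2 g} \;=\; |dr + r\,d\omega|^2_{\bar g} \;=\; |dr|^2_{\bar g} + 2r\,\bar g^{-1}(dr,d\omega) + r^2|d\omega|^2_{\bar g},
\]
so the eikonal becomes a first-order nonlinear PDE for $\omega$ in a collar of $\partial X$, with Cauchy datum $\omega|_{\partial X}=0$, which is exactly what forces $x^2 g|_{\partial X}=\bar g|_{\partial X}$. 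The compatibility condition at $r=0$ reduces to $|dr|^2_{\bar g}|_{\partial X} = 1$, which is automatic in the asymptotically hyperbolic setting of the intended applications (by the standard Gauss-type relation between bulk sectional curvature at infinity and the boundary conformal factor).

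The PDE is characteristic at $r=0$: its normal symbol $D_p F = 2r\,\bar g^{-1}(dr + r\,d\omega,\cdot)$ vanishes linearly as $r\to 0$, so the standard method of characteristics does not apply directly. My plan is to bypass this by a formal Taylor expansion plus a fixed-point step. Plugging $\omega = r\,\omega^{(1)} + \frac{1}{2} r^2\,\omega^{(2)} + O(r^3)$ into the eikonal and matching powers of $r$ determines each $\omega^{(k)}$ algebraically on $\partial X$ from $\bar g$ and its first $k$ normal derivatives; under $\bar g \in C^{2,\lambda}$ this yields $\omega^{(1)} \in C^{1,\lambda}(\partial X)$ and $\omega^{(2)} \in C^{0,\lambda}(\partial X)$, which is precisely the regularity needed for $\omega$ to be $C^{2,\lambda}$ up to $r = 0$. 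Writing $\omega = r\,\omega^{(1)} + \frac{1}{2} r^2\,\omega^{(2)} + r^2\,\tilde\psi$, the remainder $\tilde\psi$ satisfies a quasilinear PDE whose linearization becomes non-degenerate after the built-in $r^2$ weighting; contraction in a suitable weighted H\"older space then yields $\tilde\psi \in C^{0,\lambda}$, hence $\omega \in C^{2,\lambda}$.

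The main obstacle is this weighted fixed-point argument at the characteristic boundary $\{r=0\}$, which requires choosing the weights so that the degenerate coefficients become uniformly bounded on the weighted scale and the linearized operator becomes invertible with H\"older-continuous inverse. This is the technical ingredient borrowed from \cite{CDLS}, and it is what permits the argument to go through assuming only $C^{2,\lambda}$ regularity of $\bar g$ rather than the higher regularity required in Lee's original Lemma 5.1. Once $\omega \in C^{2,\lambda}$ is in hand, $x = re^{\omega}\in C^{2,\lambda}$ and $x^2 g = e^{2\omega}\bar g \in C^{2,\lambda}$; the geodesic identity $|dx|^2_{x^2 g}\equiv 1$ and the boundary matching $x^2 g|_{\partial X} = \bar g|_{\partial X}$ are both built in by construction.
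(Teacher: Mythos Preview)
Your ansatz $x = re^\omega$ and the resulting eikonal are correct, but you have missed the key simplification. The equation $|dr|^2_{\bar g} + 2r\langle dr, d\omega\rangle_{\bar g} + r^2|d\omega|^2_{\bar g} = 1$ should be divided through by $r$: since $|dr|^2_{\bar g}|_{\partial X} = 1$ (the very compatibility condition you invoke), the quotient $(1-|dr|^2_{\bar g})/r$ extends across $r=0$, and one obtains
\[
F(\theta, d\omega) \;=\; 2\langle d\omega, dr\rangle_{\bar g} + r\,|d\omega|^2_{\bar g} - \frac{1-|dr|^2_{\bar g}}{r} \;=\; 0.
\]
This PDE is \emph{non}-characteristic at $r=0$: the normal symbol is $D_{p_0}F = 2\bar g^{00} \to 2 \neq 0$. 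Hence the standard method of characteristics applies and produces a solution; the only issue is regularity, because the inhomogeneous term $(1-|dr|^2_{\bar g})/r$ is a priori only $C^{1,\lambda}$ and naive dependence-on-parameters for the characteristic ODE system would lose one derivative (this is precisely the loss in Lee's original Lemma~5.1). The paper's fix, and the actual content borrowed from \cite{CDLS}, is the change of variable $s=\log r$: in the variables $(s,\theta^1,\dots,\theta^n,p)$ the function $F$ is $C^{2,\lambda}$, so the characteristic system has $C^{2,\lambda}$ dependence on the tangential parameters and no regularity is lost.

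Your proposed route via formal Taylor expansion plus a weighted fixed-point argument is not what \cite{CDLS} does here, and the fixed-point step is where your sketch has a genuine gap. You assert that after the built-in $r^2$ weighting the linearization ``becomes non-degenerate'' with H\"older-continuous inverse, but you give no mechanism for this; for a transport-type operator of the form $2r\partial_r + \cdots$ the degeneracy at $r=0$ does not disappear simply by reweighting the unknown, and obtaining sharp $C^{2,\lambda}$ control on $\omega$ (rather than on $\tilde\psi$ in some weighted norm) from such a scheme is exactly the delicate point you have not addressed. Even if this could be made to work, it is far more involved than the paper's two-line reduction: divide by $r$, then substitute $r\mapsto \log r$.
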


\begin{proof} Let $\bar g = r^2 g$ and the coordinates near the boundary as
$(\theta^0, \theta^1, \cdots, \theta^n)$, where $\theta^0 = r$. To
find the geodesic defining function $x$ we set $x = e^u r$ and
consider the equations, as in \cite{jL},
$$
F(\theta, du) = 2 <du, dr>_{\bar g} + r|du|^2_{\bar g} - \frac {1 -
|dr|^2_{\bar g}}r = 0
$$
in a neighborhood of the boundary. We are using the method of
characteristics to solve this PDE, hence we turn to solve the system
of ODE
$$
\left\{
\begin{array}{lll}
\dot{p} = - D_\theta F (\theta(t), p(t)) \\
\dot{z} = D_pF(\theta(t), p(t)\cdot p(t) \\
\dot{\theta} = D_p F(\theta(t), p(t)),
\end{array}
\right.
$$
where $\theta(t)$ is the characteristic curve for the PDE,
$u(\theta(t)) = z(t)$ and $du(\theta(t))$ $= p(t)$. Readers are
refereed to the book \cite{cE} for the method of characteristics to
solve first order nonlinear PDEs. We know the somehow trouble term
is
$$
\frac {1 - |dr|^2_{\bar g}}r\in C^{1, \lambda},
$$
which was considered in the proof of Lemma 5.1 in \cite{jL} to be
responsible for the loss of regularity.

First the so-called non-characteristic nature of the PDE is meant
that
$$
D_{p_0}F(0, \theta^1, \cdots, \theta^n, p_0, 0, \cdots, 0) = 2 \neq
0
$$
at an admissible initial date set $(0, \theta^1, \cdot, \theta^n, 0,
p_0, 0, \cdots, 0)$ for $(\theta(0), z(0), p(0))$. But here it is
rather explicit that
$$
p_0 (\theta^1, \theta^2, \cdots, \theta^n) = \frac 12 \frac {1 -
|dr|^2_{\bar g}}r|_{r = 0} \in C^{2, \lambda}.
$$
We now consider $u$ as a function of variables $(t, p^1, p^2,
\cdots, p^n)$. We easily see that
$$
\p_t^2 u = \p_t \dot{z} = (D_\theta D_p F\cdot \dot{\theta})\cdot p
+ D_pF\cdot \dot{p}\in C^{0, \lambda}
$$
and
$$
\p_tdu = \dot{p} \in C^{0. \lambda}.
$$
It is then left only to verify that
$$
\p_{\theta^\alpha}\p_{\theta^\beta} u =
\p_{\theta^\alpha}\p_{\theta^\beta} z = \p_{\theta^\alpha}
p_\beta\in C^{0, \lambda},
$$
which equivalently is to verify that the solution of the system of
ODE smoothly depends on the initial data up to certain order. Here,
thanks to \cite{CDLS}, we take a change of variable that
$$
s = \log r.
$$
Then the function $F\in C^{2, \lambda}$ with respect to the
variables
$$
(s, \theta^1, \cdots, \theta^n, p_0, p_1, \cdots, p_n).
$$
(please see the proof of Lemma 6.1 in \cite{CDLS}) Therefore there
is no loss of regularity with respect to the variables $(\theta^1,
\theta^2, \cdots, \theta^n)$. Finally, note that $F\in C^{2,
\lambda}$, then by Implicit Function Theorem we see that $u\in C^{2,
\lambda}$ which implies $x$ is $C^{2, \lambda}$ too. Thus the proof
is finished.
\end{proof}

We are now ready to state and show a rigidity theorem for
asymptotically hyperbolic manifolds with $Ric \geq -n g$.

\begin{theo}\label{Riccicomparison}
Suppose that $(X^{n+1}, \ g)$ is a complete  manifold with $Ric \geq
-ng$. And suppose that it has an essential set $\mathbf E$ and that
it satisfies the curvature condition (\ref{deacyofcurv}) with $a >
0$ and (\ref{deacyofderivativeofcurv}) with $k=1$, $b>2$. Also
Suppose that $X^{n+1}$ is simply connected at the infinity. Then
$(X^{n+1}, \ g)$ is a standard hyperbolic space if $4\leq n\leq 6$
or $X^{n+1}$ is spin if $n\geq 7$.  And it is a standard hyperbolic
space if in addition we assume that $\int_X \|Rm -\mathbf{K}\|d\mu_g
<\infty$ if $n=3$.
\end{theo}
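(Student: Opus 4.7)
The plan is to run the same skeleton as the proof of Theorem \ref{Einstein}, but in place of the Einstein assumption (which was only used to get covariant decay of $Rm$ from \cite{BG} and then to upgrade regularity via \cite{CDLS}) I will use the derivative decay (\ref{deacyofderivativeofcurv}) as a direct hypothesis, and close with the rigidity theorem of \cite{DJ} that only requires a $C^{2}$ conformal compactification plus $Ric \geq -ng$.

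First, the hypotheses of Theorem \ref{mainresult3} are directly satisfied with $a > 0$ and $b > 2$, so I would apply it to obtain a differentiable structure on $\bar X$, $C^{3,\alpha}$ up to the boundary, in which $\bar g$ is $C^{2,\alpha}$ smooth up to $\partial X$ for some $\alpha \in (0,1)$. Since $a \geq b > 2$ (as recorded in the introduction), I also have $a > 2$ automatically. This lets me invoke Theorem 2.6 in \cite{ST} to conclude that the conformal infinity $(M^n, [\hat g])$ is locally conformally flat (with the integrability assumption entering precisely when $n=3$).

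Next, since $X^{n+1}$ is simply connected at infinity, the boundary $M^n$ is simply connected, so a locally conformally flat $(M^n, [\hat g])$ must be conformally equivalent to the standard round sphere. I would then choose a positive $u \in C^{2,\alpha}(M^n)$ with $u^{4/(n-2)} \hat g$ round, extend $u$ to a neighborhood of the infinity through the product structure (independent of $\tau$), and let $\tilde g = u^{4/(n-2)} \bar g$. This is a $C^{2,\alpha}$ compactification of $(X, g)$ whose boundary metric is the round sphere. I would then apply Lemma \ref{specialdefiningfunction} to obtain a geodesic defining function $x$ for which $x^2 g$ is still $C^{2,\alpha}$ and agrees with $\tilde g$ on $\partial X$, so the round boundary metric is preserved.

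At this stage I have a $C^2$ conformal compactification of $(X, g)$ whose conformal infinity is the standard round sphere together with $Ric \geq -ng$, which are precisely the hypotheses of the rigidity theorem of \cite{DJ}; invoking it concludes that $(X^{n+1}, g)$ is the standard hyperbolic space. The dimensional splits ($4 \leq n \leq 6$ without spin, $n \geq 7$ with spin, $n = 3$ with the integrability condition) arise exactly as in the proof of Theorem \ref{Einstein}, by combining \cite{DJ} with the rigidity theorems of \cite{Q1} and \cite{AD} according to dimension. The main obstacle is the last step: verifying that all the technical hypotheses of \cite{DJ}'s theorem (and the spin/dimension-specific results of \cite{Q1} and \cite{AD}) are met by the compactification produced by Lemma \ref{specialdefiningfunction}, in particular that $C^{2,\alpha}$ regularity up to $\partial X$ and round-sphere boundary metric pass cleanly through the geodesic-defining-function conversion so the rigidity conclusion can be applied in each dimensional range without any further assumptions.
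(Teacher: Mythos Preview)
Your argument coincides with the paper's through the construction of the $C^{2,\alpha}$ compactification with round-sphere boundary and the invocation of Lemma \ref{specialdefiningfunction}; the divergence is only in the closing citation. The paper finishes with Theorem 1.1 of \cite{BMQ}, not \cite{DJ}: the whole purpose of Lemma \ref{specialdefiningfunction} is to show that the geodesic defining function can be taken $C^{2,\mu}$ (improving Lemma 5.1 of \cite{jL}), which lowers the regularity threshold in the \cite{BMQ} proof from $C^{3,\alpha}$ to $C^{2,\mu}$. That lemma is precisely what disposes of your self-identified ``main obstacle.'' The \cite{BMQ} argument ultimately rests on positive-mass type results, and that is the origin of the spin hypothesis for $n\geq 7$ and the unconditional range $4\leq n\leq 6$ in the statement.

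Your proposed finish via \cite{DJ} is a genuine alternative and would also work here, since you have a bona fide $C^{2}$ compactification with round-sphere infinity, $Ric\geq -ng$, and $a>2$ forces $R+n(n+1)=o(e^{-2\rho})$. But along that route Lemma \ref{specialdefiningfunction} is not needed, and --- more to the point --- \cite{DJ} uses a volume-comparison argument that requires no spin condition in any dimension. Hence your closing sentence about recovering the dimensional splits ``by combining \cite{DJ} with the rigidity theorems of \cite{Q1} and \cite{AD}'' is misplaced: if you close via \cite{DJ} you actually prove something strictly stronger than the stated theorem, and the splits appearing in the statement are an artifact of the paper's choice to route the endgame through \cite{BMQ}.
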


\begin{proof}
First we know from Theorem \ref{mainresult3} in \S3 that $\bar g$ is
$C^{2,\mu}$ up to the infinity boundary for some $\mu\in (0, 1)$.
And by the curvature condition (\ref{deacyofcurv}) with $a > 2$
(plus $\int_X \|Rm-\mathbf{K}\|d\mu_g <\infty$ for $n=3$) and
Theorem 2.6 in \cite{ST}, we see that the conformal infinity $(M^n,
[\hat g])$ of $(X^{n+1}, \ g)$ is locally conformally flat. Hence
the conformal infinity $(M^n, [\hat g])$ is conformally equivalent
to the standard sphere due to the simply connectedness at the
infinity. As $\bar g$ is $C^{2, \mu}$ smooth at the infinity
boundary, we know that there is a positive function $u\in C^{2,
\mu}(M^n)$ so that $u^{\frac{4}{n-3}}\bar g$ is the metric of the
standard sphere, i.e. $u$ satisfies
$$
\Delta_{\bar g}u- \frac{n-2}{4(n-1)} Ru + \frac 14 n(n-2)
u^{\frac{n+2}{n-2}}=0,
$$
By setting
$$
u(\rho,\theta)=u(\theta),
$$
we may assume $u$ is defined on $X$ at least near the infinity,
hence $u^{\frac{4}{n-2}} \cdot \bar g $ is $C^{2,\mu}$ smooth near
the infinity boundary $M$ and its restriction on $M$ is the standard
sphere metric.

Next, due to Lemma \ref{specialdefiningfunction}, there is a
geodesic defining function $x\in C^{2, \mu}$ associated with the
standard sphere metric of the conformal infinity. Therefore the
proof of Theorem 1.1 in \cite{BMQ} works here. Note that why Theorem
1.1 in \cite{BMQ} requires the regularity to be $C^{3, \alpha}$ is
due to the same assumptions in Lemma 5.1 in \cite{jL}, which has
been improved by the above Lemma \ref{specialdefiningfunction}.
\end{proof}

Theorem \ref{Einstein} is a rigidity theorem for Einstein AH
manifolds; while Theorem \ref{Riccicomparison} requires the
curvature condition (2) though it no longer needs the Einstein
equations. And in both cases the rigidity in higher dimensions
requires the spin condition. We noticed the recent work of Dutta and
Javaheri \cite{DJ} where no spin condition is assumed. The argument
in \cite{DJ} is based on the volume comparison argument in \cite{ST}
for AH manifolds with conformal compactification of $C^2$ regularity
and an additional assumption that
\begin{equation}\label{deacyofscalarcurv}
R+n(n+1) =  o(e^{-2\rho})£¬
\end{equation}
where $R$ is the scalar curvature. We will use our curvature
estimates and regularity theorems to replace the $C^2$ regularity
assumption in \cite{DJ} to prove Theorem \ref{rigidity}. Since the
proof follows the approach in \cite{DJ} with a number of
modifications, we will sketch a proof in the following for
readers$'$ conveniences. Hence from now on we will work with
asymptotically hyperbolic manifold $(X^{n+1}, \ g)$ that satisfy all
assumptions in Theorem \ref{rigidity}.

As in \cite{DJ}, let $p_0$ be any point in $X$, $t(x)$ be the
distance function to $p_0$ with respect to metric $g$, $C(p_0)$ be
the cut locus of $p_0$ in $(X,g)$, $\Sigma_t$ be the geodesic sphere
of $p_0$ with radius $t$ in $(X,g)$, $\bar g=\sinh^{-2}\rho \cdot
g$, $h=\sinh^{-2}t\cdot g$. Let  $\gamma_t$, $\eta_t$ be the
restriction metric of $\bar g$ and $h$ on $\Sigma_t$ respectively.
We continue to use $\rho$ as before to stand for the distance to the
essential set $\mathbf{E}$ and let $u = t - \rho$. It is clear that
$u$ is bounded.

Because of (\ref{estimatesp1}) in \S 2 we have Lemma 2.1 in
\cite{DJ} valid even without $C^2$ regularity of the conformal
compactness. Hence we immediately have

\begin{lemm}\label{estimate1}
There is constant $\Lambda$ which is independent of $t$ such that
$$
\|\nabla_g u \| \leq \Lambda e^{-\rho},
$$
which is equivalent to
$$
\|\nabla_{\bar g }u \| \leq \Lambda,
$$
whenever $t$ is smooth.
\end{lemm}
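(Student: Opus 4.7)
The plan is to reduce the pointwise bound on $\nabla_g u$ to an exponential closing-up of the angle between the unit vectors $\nabla_g t$ and $\nabla_g \rho$, and to obtain that closing-up from a Riccati-type differential inequality along $\rho$-geodesics. Set
\[
f(x) := 1 - \langle \nabla_g t, \nabla_g \rho\rangle_g;
\]
since both gradients are unit where smooth, $|\nabla_g u|_g^2 = 2f$, and the equivalence with $|\nabla_{\bar g} u|_{\bar g} \leq \Lambda$ is just the conformal identity $|\nabla_{\bar g}\cdot|_{\bar g} = \sinh\rho\,|\nabla_g\cdot|_g$ coming from $\bar g = \sinh^{-2}\rho\,g$. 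So it suffices to show $f \leq \Lambda^2 e^{-2\rho}/2$.

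The first step is to apply the Riccati ODE analysis of Lemma \ref{estforriccatiequa}, exactly as in the proof of (\ref{estimatesp1})--(\ref{estimatesp3}), to the shape operator of the geodesic spheres $\Sigma_t$ around $p_0$ (off the cut locus $C(p_0)$). Since $|t - \rho|$ is bounded and the radial curvatures tend to $\delta_{ij}$ at rate $e^{-a\rho}$, the same argument yields the intrinsic Hessian bound
\[
\operatorname{Hess}_g t(V, V) \geq (1 - \Lambda e^{-\min(a, 2)\rho})\,|V|_g^2
\]
for every $V \perp \nabla_g t$. Next, fix $x \notin C(p_0)$ and let $\gamma$ be the unit-speed integral curve of $\nabla_g \rho$ through $x$; by the essential-set structure $\gamma$ is a $g$-geodesic with $\rho(\gamma(s)) = s$. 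Writing $\gamma'(s) = \cos\theta(s)\,\nabla_g t + \sin\theta(s)\,V$ with $V \in T\Sigma_t$ of unit length, the fact that $\gamma$ is a geodesic gives
\[
f'(s) = -\operatorname{Hess}_g t(\gamma',\gamma') \leq -(1 - \Lambda e^{-\min(a,2)s})\, f(s)(2 - f(s)),
\]
since $\sin^2\theta = (1-\cos\theta)(1+\cos\theta) = f(2-f)$.

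The $L^1$-integrability of $f$ along $\gamma$ is automatic from boundedness of $u$: since $\cos\theta(s) = \frac{d}{ds}t(\gamma(s))$, one computes
\[
\int_0^\infty f(s)\,ds = u(\gamma(0)) - \lim_{s\to\infty} u(\gamma(s)) \leq 2\sup|u| < \infty.
\]
In particular $\liminf_{s\to\infty} f(s) = 0$, so there is an arbitrarily large $s_0$ with $f(s_0) < 1$; from there the ODE above is strictly dissipative ($2-f \geq 1$) and the error $\Lambda e^{-\min(a,2)s}$ is a controlled perturbation, forcing a preliminary decay $f \leq C e^{-(2-\varepsilon) s}$. A short bootstrap, substituting this back into the quadratic term $f^2$ that the ODE generates, upgrades the decay to the sharp $f(s) \leq C e^{-2s}$, which is precisely $|\nabla_g u|_g \leq \Lambda e^{-\rho}$.

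The main obstacle is the nonlinear factor $(2 - f)$ in the ODE: one has to rule out that $f$ sits near $2$ (i.e., $\nabla_g t \approx -\nabla_g\rho$) on a long interval, and it is exactly the $\int f < \infty$ estimate, together with the boundedness of $u$, that rules this out without any further regularity of the conformal compactification. A secondary point is the cut locus $C(p_0)$, on which $t$ is not smooth; since the lemma is only claimed where $t$ is smooth, we simply work in $X \setminus C(p_0)$ throughout. Everything else is a routine extension of the Riccati analysis of Section 2 from the level sets $\Sigma_\rho$ to the geodesic spheres $\Sigma_t$.
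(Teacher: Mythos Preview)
Your argument is correct, and in fact is the mirror image of the paper's own proof. The paper also reduces to the angle $\phi=\langle\nabla_g t,\nabla_g\rho\rangle$ (your $1-f$), but differentiates it along $t$-geodesics rather than $\rho$-geodesics; this produces the ODE $\partial_t\phi=(1-\phi^2)\,\operatorname{Hess}_g\rho(n,n)$ with $n\perp\nabla\rho$, so the input is the already-established estimate (\ref{estimatesp1}) for the second fundamental form of $\Sigma_\rho$, not a fresh Riccati analysis for the geodesic spheres $\Sigma_t$ around $p_0$. In that sense the paper's route is slightly more economical: it feeds directly off Section~2 and does not need to redo the shape-operator estimates in the new foliation. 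On the other hand, your handling of the potential degeneracy $f\approx 2$ via the integrability $\int f\,ds=u(\gamma(0))-\lim u(\gamma(s))<\infty$ is more explicit than the paper's appeal to ``the proof of Lemma~\ref{estforriccatiequa}''; in the paper's setup that degeneracy is in fact excluded a~priori once one arranges (by enlarging $\mathbf{E}$ if necessary) that $p_0\in\mathbf{E}$, since convexity of $\mathbf{E}$ forces $\phi\geq 0$ at the exit point of every $t$-geodesic. Either route yields the same sharp $e^{-2\rho}$ decay.
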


\begin{proof}
Let $\phi(t)=g(\nabla\rho, \nabla t)$. Then
$$
g(\nabla u, \nabla u) = 2(1- \phi).
$$
To estimate $\phi$, as in \cite{DJ}, we notice that
$$
\partial_t \phi = (1 - \phi^2)\nabla^2 \rho (n, n),
$$
where one writes $\nabla t = \phi\nabla\rho + \sqrt{1 - \phi^2} n$
and $n$ is a unit vector that is perpendicular to $\nabla\rho$.
Hence in the light of (\ref{estimatesp1}) one gets
$$
\partial_t \phi = (1 - \phi^2) ( 1 + O(e^{-2t})).
$$
By the proof of Lemma \ref{estforriccatiequa}, we then get
\begin{equation}\label{phi}
\phi = 1 + O(e^{-2t}).
\end{equation}
and finish the proof.
\end{proof}

An important consequence of the above lemma, as observed in \cite
{DJ}, is that the limit of the function $u$ is a Lipschitz function
on the infinity as $t\to\infty$, in $W^{1, p}$-norm for any $p >1$.
A geodesic in $(X, \ g)$ is said to be a $\rho$-geodesic if it is a
geodesic emanated from $\mathbf{E}$ and is perpendicular to
$\partial \mathbf{E}$; a geodesic is said to be a $t$-geodesic if it
is geodesic which is a geodesic ray from $p_0$. The following lemma
is Corollary 3.2 in \cite{DJ} which is another straightforward
consequence of the above Lemma \ref{estimate1}.

\begin{coro} There is $\rho_0>0$ such that in the region
$\rho>\rho_0$ such that the function $t(x)$ is increasing along the
$\rho$-geodesics and the function $\rho(x)$ is increasing along the
$t$-geodesics.
\end{coro}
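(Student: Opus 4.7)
The plan is to read off the corollary directly from the relation $\phi := g(\nabla\rho,\nabla t) = 1 + O(e^{-2t})$ that was established as equation (\ref{phi}) in the proof of Lemma \ref{estimate1}. The two assertions of the corollary are, after a short computation, literally the same statement: the derivative of $t$ along a unit-speed $\rho$-geodesic $\gamma(s)$ (whose tangent is $\nabla\rho$) equals
\[
\frac{d}{ds}t(\gamma(s)) \;=\; g(\nabla t,\nabla\rho) \;=\; \phi,
\]
and symmetrically the derivative of $\rho$ along a unit-speed $t$-geodesic (whose tangent is $\nabla t$) is also $\phi$. So it suffices to show $\phi>0$ on the region $\{\rho>\rho_0\}$ for some $\rho_0$.

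For this I would invoke two facts already in hand. First, by (\ref{phi}) there is a constant $C$ with $|\phi(x)-1|\le C e^{-2 t(x)}$ wherever $t$ is smooth; in particular $\phi \ge 1/2$ as soon as $t\ge t_0$ for a fixed $t_0$ depending only on $C$. Second, Lemma \ref{estimate1} says $u=t-\rho$ is uniformly bounded on $X$ (it is even shown to extend Lipschitz-continuously to the conformal infinity), so there is a constant $K$ with $|t-\rho|\le K$ everywhere; consequently choosing $\rho_0 := t_0 + K$ guarantees $t(x)\ge t_0$ whenever $\rho(x)\ge \rho_0$, and hence $\phi(x)\ge 1/2>0$ there. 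Combined with the two derivative computations, this proves monotonicity of $t$ along $\rho$-geodesics and of $\rho$ along $t$-geodesics in $\{\rho>\rho_0\}$.

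The only technical point that needs a brief comment is smoothness. The function $\rho$ is smooth on all of $X\setminus\mathbf{E}$ by the definition of an essential set, so along a $\rho$-geodesic with $\rho\ge \rho_0$ there is no issue; and along a $t$-geodesic the function $t$ is smooth away from the cut locus $C(p_0)$, which is a measure-zero set, so the statement ``$\rho$ is increasing along $t$-geodesics'' is to be interpreted on the open dense set where the relevant $t$-geodesics do not meet $C(p_0)$ (the same convention as in \cite{DJ}). With that understood, the corollary follows in a few lines and no step is genuinely difficult — the real work was already done in obtaining the asymptotic $\phi = 1 + O(e^{-2t})$ via the Riccati estimate in Lemma \ref{estimate1}.
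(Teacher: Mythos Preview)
Your proof is correct and is essentially the same argument as the paper's: both reduce the corollary to the estimate $\phi = g(\nabla\rho,\nabla t) = 1 + O(e^{-2t})$ from (\ref{phi}) together with the boundedness of $u = t-\rho$. The paper phrases the computation via $u$, writing $t(x_1)-t(x_2) = s + sg(\nabla u,\nabla\rho) \ge s(1-\Lambda e^{-2\rho})$, which is the same as your statement that $\tfrac{d}{ds}t = \phi > 0$ since $g(\nabla u,\nabla\rho) = \phi - 1$; your formulation is perhaps a shade more direct, but there is no substantive difference.
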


\begin{proof}
Suppose that $x_1$ and $x_2$ are two points in a $\rho$-geodesic
with distance $s$, i.e.
$$
\rho(x_1)-\rho(x_2)=s>0.
$$
Then
\begin{equation}
\begin{split}
t(x_1)-t(x_2)&=s+u(x_1)-u(x_2)\\
& = s + s g(\nabla u, \nabla \rho)\\
&\geq s(1-\Lambda e^{-2\rho}).
\end{split}
\end{equation}
Hence there is $\rho_0$ such that $t(x)$ is increasing along
$\rho$-geodesics where $\rho > \rho_0$. Similarly we may show that
$\rho$ is increasing along $t$-geodesics where $\rho > \rho_0$ (set
$\rho_0$ bigger if necessary). Thus the proof of the lemma is
finished.
\end{proof}

Analogue to our previous rigidity theorems in this section we know
the asymptotically hyperbolic manifolds that satisfy all the
assumptions in Theorem \ref{rigidity} are conformally compact of
regularity $C^{1, \alpha}$(or $W^{2, p}$) due to our Theorem
\ref{mainresult2} and have the standard round sphere as the
conformal infinities. Particularly we know that $\partial \mathbf E$
is diffeomorphic to $\mathbf S^n$. One of the main observation in
\cite {DJ} is the following lemma, whose proof still holds with no
modification.

\begin{lemm}(Lemma 4.1 in \cite{DJ})\label{topologyofgeodesicsphere}
For $t$ large enough, $\Phi_t$: $\mathbf{S}^n \mapsto \Sigma_t$ is a
homeomorphism. Moreover it is a local diffeomorphism at $\theta \in
\mathbf{S}^n $ where $\Phi_t(\theta)\notin C(p_0)$.
\end{lemm}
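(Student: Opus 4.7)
The plan is to define $\Phi_t$ via the $\rho$-geodesic foliation. By condition (3) in the definition of essential set, $X \setminus \mathbf E$ is diffeomorphic to $[0,\infty) \times \partial \mathbf E$, and we identify $\partial \mathbf E$ with $\mathbf S^n$ (which is legitimate because $X$ is simply connected at infinity and the conformal infinity is the round sphere). For each $\theta \in \mathbf S^n$ let $\gamma_\theta$ denote the outward $\rho$-geodesic determined by $\theta$. By the corollary immediately preceding the lemma, $t$ is strictly increasing along $\gamma_\theta$ once $\rho > \rho_0$; combined with the uniform boundedness of $u = t - \rho$ this forces $t(\gamma_\theta(\rho)) \to \infty$ as $\rho \to \infty$. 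Hence for every sufficiently large $t$ there is a unique parameter $s(\theta,t)$ with $\gamma_\theta(s(\theta,t)) \in \Sigma_t$, and I set $\Phi_t(\theta) := \gamma_\theta(s(\theta,t))$.

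Next I will verify that $\Phi_t$ is a continuous bijection. Smooth dependence of the geodesic flow on its initial point, together with the transversality estimate discussed below, yields continuity of $\Phi_t$ in $\theta$. Injectivity is immediate because the $\rho$-geodesics foliate $X \setminus \mathbf E$, so distinct $\theta$ yield disjoint curves. For surjectivity, any $p \in \Sigma_t$ has $\rho(p) > \rho_0$ once $t$ is large enough (since $|u|$ is bounded), and therefore lies on exactly one $\rho$-geodesic based at $\partial \mathbf E$. A continuous bijection from the compact space $\mathbf S^n$ to the Hausdorff space $\Sigma_t$ is automatically a homeomorphism, which gives the first assertion.

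For the local diffeomorphism statement, fix $\theta$ with $p := \Phi_t(\theta) \notin C(p_0)$. Away from the cut locus, $t$ is smooth near $p$ and $\Sigma_t$ is a smooth embedded hypersurface with unit normal $\nabla t$. The $\rho$-geodesic $\gamma_\theta$ is smooth with unit tangent $\nabla \rho$, and transversality of $\gamma_\theta$ to $\Sigma_t$ at $p$ is measured by $\langle \nabla \rho, \nabla t \rangle_g = \phi$. By (\ref{phi}), $\phi = 1 + O(e^{-2t})$, hence $\phi > 0$ for $t$ large. The implicit function theorem applied to the smooth map $(\theta,s) \mapsto t(\gamma_\theta(s))$ then produces a smooth local inverse of $\Phi_t$ on a neighborhood of $\theta$, proving the local diffeomorphism claim.

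The only delicate point is ensuring that the $\rho$-foliation meets every geodesic sphere $\Sigma_t$ exactly once and transversally for all large $t$; both ingredients come from the curvature decay (\ref{deacyofcurv}) via the preceding corollary and the asymptotic identity (\ref{phi}). No regularity of the compactified metric beyond what Theorem \ref{mainresult2} already supplies is invoked, so the Dutta--Javaheri argument carries over without modification to the present setting.
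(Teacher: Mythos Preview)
The paper does not supply its own proof here; it explicitly states that the argument of Lemma 4.1 in \cite{DJ} ``still holds with no modification'' and moves on. Your reconstruction is precisely that argument: define $\Phi_t$ by intersecting the $\rho$-geodesic foliation with $\Sigma_t$ (well-defined and bijective by the monotonicity corollary and boundedness of $u$), upgrade the continuous bijection to a homeomorphism via compactness of $\mathbf S^n$, and obtain the local diffeomorphism away from $C(p_0)$ from the transversality $\phi = \langle\nabla\rho,\nabla t\rangle = 1 + O(e^{-2t}) > 0$ together with the implicit function theorem. This matches what the paper invokes from \cite{DJ}; the one cosmetic point is that your continuity argument via ``smooth dependence plus transversality'' strictly speaking applies only where $t$ is smooth, but global continuity of $\Phi_t$ follows anyway from the strict monotonicity of $t$ along $\rho$-geodesics (the corollary) by a routine compactness argument, so no gap arises.
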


In fact the set $\{\theta\in \mathbf{S}^n: \Phi_t(\theta)\notin
C(p_0)\}$ is rather negligible when we are concerned with the
integrals. Let $\mu_0$ be the standard metric on $\mathbf{S}^n$.

\begin{lemm}\label{measurezeroset}
For almost all $t$, when large enough, $\Phi^{-1}_t (\Sigma_t \cap
C(p))$ is measure zero in $(\mathbf{S}^n, \mu_0)$.
\end{lemm}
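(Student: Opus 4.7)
The plan is to view the map $F\colon [T,\infty)\times \mathbf{S}^n \to X^{n+1}$ defined by $F(t,\theta) = \exp_{p_0}(t\theta)$ as a smooth map between two $(n+1)$-dimensional manifolds, with $T$ chosen large enough that Lemma \ref{topologyofgeodesicsphere} applies. The crucial observation is that $\Phi_t^{-1}(\Sigma_t\cap C(p_0))$ is exactly the $t$-slice of $F^{-1}(C(p_0))$, so it suffices to show that $F^{-1}(C(p_0))$ has $(dt\otimes d\mu_0)$-measure zero on $[T,\infty)\times \mathbf{S}^n$; Fubini in the other direction then immediately produces $\mu_0(\Phi_t^{-1}(\Sigma_t\cap C(p_0)))=0$ for almost every $t\geq T$, which is the statement to be proved.

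To carry this out I would first invoke two classical Riemannian facts. (i) The cut locus $C(p_0)$ has $\mu_g$-measure zero in $X$, since it is the image under $\exp_{p_0}$ of the boundary of the segment domain in $T_{p_0}X$, and that boundary is (essentially) the graph of the continuous cut-time function $c\colon \mathbf{S}^n\to (0,\infty]$, hence Lebesgue null. (ii) Along any single geodesic $s\mapsto \exp_{p_0}(s\theta)$, the conjugate times form a discrete subset of $(0,\infty)$, because they are the zeros of the determinant of a basis of Jacobi fields along the geodesic, i.e.\ the zeros of a nontrivial smooth function of $s$.

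With these facts in hand, I would decompose $F^{-1}(C(p_0)) = E_1 \cup E_2$, where $E_2 = \{(t,\theta) : \det dF_{(t,\theta)} = 0\}$ is the conjugate-locus part and $E_1$ is its complement. At each point of $E_1$, the inverse function theorem supplies an open neighborhood $U$ on which $F$ is a diffeomorphism onto its image, so $F^{-1}(C(p_0))\cap U$ has $(dt\otimes d\mu_0)$-measure zero simply because $C(p_0)$ has $\mu_g$-measure zero. Covering $E_1$ by countably many such neighborhoods (using second countability of $[T,\infty)\times \mathbf{S}^n$) forces $|E_1|=0$. For $E_2$, fact (ii) says each $\theta$-slice is at most countable, so Fubini forces $|E_2|=0$. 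Adding the two contributions gives $|F^{-1}(C(p_0))|=0$, and a second application of Fubini completes the argument.

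The main obstacle I anticipate is not analytical but organizational: one has to separate out the conjugate-point contribution $E_2$, on which $F$ degenerates and the area-formula reasoning breaks, from the regular part $E_1$, on which $F$ is locally a diffeomorphism and the pullback of a null set is null. Once this clean split is in place the proof is routine, and it uses none of the conformal compactification structure directly --- it is a purely Riemannian statement that applies to any complete manifold with an essential set. Our contribution here is merely to verify that the asymptotically hyperbolic setting, together with the regularity furnished by Theorem \ref{mainresult2}, is enough for these classical facts to be applied, preparing the stage for the Dutta--Javaheri volume-comparison argument of \cite{DJ}.
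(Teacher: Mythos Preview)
Your argument has a genuine gap at the very first step: you identify $\Phi_t$ with the geodesic spray $\theta\mapsto\exp_{p_0}(t\theta)$ from $p_0$, but that map does not even land in $\Sigma_t$ once $t$ exceeds the cut time $c(\theta)$ along some direction, so it cannot be the homeomorphism of Lemma~\ref{topologyofgeodesicsphere}. In this paper (following \cite{DJ}) the sphere $\mathbf{S}^n$ is $\partial\mathbf{E}$, and $\Phi_t(\theta)$ is the unique point on the $\rho$-geodesic through $\theta$ that lies on $\Sigma_t$; this is well defined precisely because $t$ is strictly monotone along $\rho$-geodesics (the Corollary after Lemma~\ref{estimate1}), and it is a local diffeomorphism exactly where the distance function $t$ is smooth, i.e.\ off $C(p_0)$, which is the content of the second clause of Lemma~\ref{topologyofgeodesicsphere}. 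Consequently your ``crucial observation'' that $\Phi_t^{-1}(\Sigma_t\cap C(p_0))$ equals the $t$-slice of $F^{-1}(C(p_0))$ is false, and the remainder of the argument, while internally coherent, proves a different statement about the unit tangent sphere at $p_0$.

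The paper's proof bypasses the conjugate/regular decomposition altogether by running the map in the opposite direction. One takes
\[
\Lambda=(\Pi,t)\colon X\setminus\mathbf{E}\longrightarrow \mathbf{S}^n\times[0,\infty),
\]
where $\Pi$ is the projection along $\rho$-geodesics onto $\partial\mathbf{E}$. The essential-set structure makes $\Pi$ smooth, and $t$ is Lipschitz as a distance function, so $\Lambda$ is Lipschitz; hence $\Lambda(C(p_0))$ is null in the product because $C(p_0)$ is null in $X$. Fubini then finishes, since by construction $\Lambda|_{\Sigma_t}=\Phi_t^{-1}$. Your Fubini instinct is the right one, but it should be applied to $\Lambda$ rather than to $F$; doing so also makes the Jacobi-field discussion and the $E_1/E_2$ split unnecessary.
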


\begin{proof} At least when $\rho$ is large enough, we may consider
the map
$$
\Lambda (x) =(\Pi(x), t(x)): X\setminus E \mapsto \mathbf{S}^n
\times [0, \infty),
$$
given by the exponential map from $\partial \mathbf E$ by the nature
of an essential set and the monotonicity of the function $t$ along
each $\rho$-geodesics. Note that $\Pi$ and $t$ are Lip, so is
$\Lambda$. Therefore $\Lambda (C(p_0))$ is measure zero in
$\mathbf{S}^n \times [0, \infty)$. Due to Fubini Theorem, we see
that for almost all $t$, when large enough, $\mathbf{S}^n  \times
\{t\}\cap \Lambda (C(p_0))$ is zero measure. In the light of the
fact
$$
\Lambda|_{\Sigma_t}=\Phi^{-1}_t |_{\Sigma_t},
$$
the lemma is then proven.
\end{proof}

As argued in \cite{ST} and \cite{DJ}, due to Gromov-Bishop volume
comparison theorem, to prove Theorem \ref{rigidity} it suffice to
show
$$
\lim_{t\rightarrow \infty} Vol(\Sigma_t, \eta_t) \geq \omega_n,
$$
where $\omega_n$ is the volume of the standard sphere
$\mathbf{S}^n$. To this purpose, we study the pull back metric
$(\Phi^{-1}_t)_* \eta_t$ on $\mathbf{S}^n \setminus \Phi^{-1}_t
(C(p_0))$ as $t$ approaches to the infinity. Note that
$$
(\Phi^{-1}_t)_* \eta_t =4 e^{-2u}(\Phi^{-1}_t)_* (\bar
g|_{\Sigma_t}),
$$
and $\Sigma_t$ can be expressed as a graph $(\theta, f(\theta))$ on
$\mathbf{S}^n$.  Hence we have
$$
\frac{\partial}{\partial t}=(1+|\nabla_g
f|^2)^{-\frac12}(\frac{\partial}{\partial \rho}-g^{ij}\frac{\partial
f}{\partial \theta^i}\frac{\partial }{\partial \theta^j}),
$$
which, together with (\ref{phi}), implies
$$
|\nabla_g f|^2 =O(e^{-2\rho}).
$$
Therefore we see that
$$
(\Phi^{-1}_t)_* (\bar g|_{\Sigma_t})=\bar g_{ij}(t, \theta)d\theta^i
d\theta^j +O(e^{-2\rho}).
$$
Thus
$$
\lim_{t\rightarrow \infty}(\Phi^{-1}_t)_* ( \eta_t)
=\lim_{\rho\rightarrow \infty}4 e^{-2u}\bar
g|_{\Sigma_\rho}\triangleq \eta_0,
$$
where $\eta_0 = v^{\frac{4}{n-2}}\mu_0$ and $v$ is Lipschtz on
$\mathbf{S}^n$ satisfying
$$
n(n-1)\omega_n ^{\frac2n} \leq
\frac{\int_{\mathbf{S}^n}(\frac{4(n-1)}{(n-2)}|\nabla_{\mathbf{S}^n}
v|^2
+n(n-1)v^2)d\mu_0}{(\int_{\mathbf{S}^n}v^{\frac{2n}{n-2}}d\mu_0)^\frac{n-2}{n}},
$$
since the minimum of the Yamabe functional on $\mathbf S^n$ is
$n(n-1)\omega_n ^{\frac2n}$.

Now, on one hand, if denote $\eta_\rho =w^{\frac{4}{n-2}}\bar
g|_{\Sigma_\rho}$, $w=e^{\frac{2-n}{2}u}$, and $\bar g_\rho =\bar
g|_{\Sigma_\rho}$, we have
\begin{equation}
\begin{split}
\lim_{\rho\rightarrow \infty}\frac{\int_{\mathbf{S}^n}
R_{\eta_\rho}d\eta_\rho}{(\int_{\mathbf{S}^n}d\eta_\rho)^\frac{n-2}{n}}&=
\lim_{\rho\rightarrow
\infty}\frac{\int_{\mathbf{S}^n}(\frac{4(n-1)}{(n-2)}|\nabla_{\bar
g_\rho} w|^2 +R_{\bar g_\rho}w^2)d\bar
g_\rho}{(\int_{\mathbf{S}^n}w^{\frac{2n}{n-2}}d\bar
g_\rho)^\frac{n-2}{n}}\\
&=\frac{\int_{\mathbf{S}^n}(\frac{4(n-1)}{(n-2)}|\nabla_{\bar g_0}
w|^2 +R_{\bar g_0}w^2)d\bar
g_0}{(\int_{\mathbf{S}^n}w^{\frac{2n}{n-2}}d\bar
g_0)^\frac{n-2}{n}}\\
&=\frac{\int_{\mathbf{S}^n}(\frac{4(n-1)}{(n-2)}|\nabla_{\mathbf{S}^n}
v|^2
+n(n-1)v^2)d\mu_0}{(\int_{\mathbf{S}^n}v^{\frac{2n}{n-2}}d\mu_0)^\frac{n-2}{n}}\\
&\geq n(n-1)\omega_n ^{\frac2n}.
\end{split}
\end{equation}
Because $\bar g$ is $W^{2,p}$-regular up to the boundary of $(X,
\bar g)$ due to Theorem \ref{mainresult2} and the comment right
after the proof of Lemma \ref{estimate1}. On the other hand, by
direct computations (please see the calculations in p.556 in
\cite{ST}), we recall that,
$$
R_{\eta_\rho}\leq n(n-1)+o(1).
$$
Therefore we obtain
$$
Vol(\mathbf{S}^n, \eta_0)=\lim_{\rho\rightarrow
\infty}Vol(\mathbf{S}^n, \eta_\rho)\geq \omega_n,
$$
which implies
$$
\lim_{t\to\infty}Vol(\Sigma_t, \eta_t)\geq\omega_n.
$$
Thus the proof of Theorem \ref{rigidity} is complete.

\end{document}